\newcommand{\calA}{\mathcal{A}}
\newcommand{\calB}{\mathcal{B}}
\newcommand{\calD}{\mathcal{D}}
\newcommand{\calF}{\mathcal{F}}
\newcommand{\calG}{\mathcal{G}}
\newcommand{\calH}{\mathcal{H}}
\newcommand{\calS}{\mathcal{S}}
\newcommand{\calT}{\mathcal{T}}
\newcommand{\calX}{\mathcal{X}}
\newcommand{\repn}[1]{1,\ldots,#1}
\newcommand{\repdc}[3]{#1_{#2} , \ldots , #1_{#3}}
\newcommand{\N}{{\mathbb N}}
\newcommand{\Z}{{\mathbb Z}}
\newcommand{\R}{{\mathbb R}}
\newcommand{\C}{{\mathbb C}}
\newcommand{\bs}{\backslash}
\newcommand{\I}[1]{\mathbbm{1}_{#1}}
\newcommand{\SUM}[1]{\sum_{\begin{smallmatrix}#1\end{smallmatrix}}}
\newcommand{\dif}{\textrm{d}}
\newcommand{\E}[1]{{\mathbb E}\left(#1\right)}
\newcommand{\Cov}[1]{\textnormal{Cov}\left(#1\right)}
\newcommand{\by}{\times}
\newcommand{\Mat}[2]{\textnormal{M}_{#1}\left(#2\right)}
\newcommand{\MAT}{\textnormal{M}}
\newcommand{\diag}[1]{\textnormal{diag}\left(#1\right)}
\newcommand{\Tr}[1]{\textnormal{Tr}\left(#1\right)}
\newcommand{\lTr}{\textnormal{Tr}}
\newcommand{\T}{\mathrm{T}}
\newcommand{\bsmatrix}{\left(\begin{smallmatrix}}
\newcommand{\esmatrix}{\end{smallmatrix}\right)}
\newcommand{\id}{{\bf 1}}
\newcommand{\FI}{\varphi}
\newcommand{\F}[1]{\FI\left(#1\right)}
\newcommand{\NC}{\textnormal{NC}}
\newcommand{\SP}{\textnormal{SP}}
\newcommand{\DP}{\textnormal{DP}}
\newcommand{\AP}{\textnormal{AP}}
\newcommand{\EOP}{E}
\newcommand{\Eop}[1]{\EOP\left(#1\right)}
\newcommand{\eq}[1]{\begin{equation*}
#1
\end{equation*}}
\newcommand{\eqn}[2]{\begin{equation}
\label{#1}
#2
\end{equation}}
\newcommand{\al}[1]{\begin{align*}
#1
\end{align*}}
\newcommand{\aln}[1]{\begin{align}
#1
\end{align}}
\newcommand{\dsty}[1]{$\displaystyle #1$}
\newcommand{\ndsty}[1]{$#1$}
\newtheoremstyle{DefinitionStyle}  
	{9pt}	   
	{9pt}      
	{\it}      
	{0pt}      
	{\bfseries}
	{.}        
	{3pt}      
	{}         
\theoremstyle{DefinitionStyle}
\newtheorem{theorem}{Theorem}
\newtheorem{definition}[theorem]{Definition}
\newtheorem{example}[theorem]{Example}
\newtheorem{lemma}[theorem]{Lemma}
\newtheorem{proposition}[theorem]{Proposition}
\newtheorem{corollary}[theorem]{Corollary}
\newcommand{\minus}{-}
\newcommand{\plus}{+}
\author{Mario Diaz\thanks{Correspondence to be sent to Mario Diaz: \textit{13madt@queensu.ca}\newline The work of Mario Diaz was partially supported by CIMI (Centre International de Math\'{e}matiques et d'Informatique) Excellence Program, ANR-11-LABX-0040-CIMI within the program ANR-11-IDEX-0002-02, while visiting the Institute of Mathematics of Toulouse, the Natural Sciences and Engineering Research Council of Canada, and an Ontario Trillium Scholarship.}\\{\footnotesize Queen's University} \and James Mingo\footnote{Research supported by a Discovery Grant from the Natural Sciences and Engineering Research Council of Canada.}\\{\footnotesize Queen's University} \and Serban Belinschi\\{\footnotesize CNRS - Institute of Mathematics of Toulouse}}
\title{On the Global Fluctuations of Block Gaussian Matrices}
\date{\today}
\begin{document}
\maketitle

\abstract{In this paper we study the global fluctuations of block Gaussian matrices within the framework of second-order free probability theory. In order to compute the second-order Cauchy transform of these matrices, we introduce a matricial second-order conditional expectation and compute the matricial second-order Cauchy transform of a certain type of non-commutative random variables. As a by-product, using the linearization technique, we obtain the second-order Cauchy transform of non-commutative rational functions evaluated on selfadjoint Gaussian matrices.}

\section{Introduction and main results}
\label{Section:Introduction}

Voiculescu's free probability theory \cite{Voiculescu1985} has proved to be very useful in the study of the asymptotic behavior of random matrices \cite{Voiculescu1995,HaagerupThorbjornsen2005,Anderson2013,BelinschiMaiSpeicher2015}. Among other reasons, this is the case because in many situations the expectation of the trace of products of random matrices converges to the expectation of products of non-commutative random variables. In this non-commutative setting, the so-called semicircular variables play an outstanding role. On one hand, they are the non-commutative counterpart of Gaussian random variables in many aspects; on the other, they encode the asymptotic behavior of selfadjoint Gaussian matrices.

An important aspect of free probability theory comes from its combinatorial facet, with the non-crossing partitions in its heart \cite{NicaSpeicher2006}. From this combinatorial point of view, the expectation of products of semicircular variables depend on the so-called non-crossing pairings. About a decade ago, Mingo and Nica \cite{MingoNica2004} extended this combinatorial treatment to the study of the global fluctuations (i.e., the covariance of two traces) of selfadjoint Gaussian matrices. In particular, they demonstrated that these global fluctuations depend on another type of pairings, the non-crossing annular pairings. In order to systematize the combinatorial treatment of the global fluctuations of random matrices, Mingo and Speicher introduced a theory called second-order free probability \cite{MingoSpeicher2006,MingoSniadySpeicher2007,CollinsMingoSniadySpeicher2007}. In this theory, the first- and second-order behaviors of selfadjoint Gaussian matrices are encoded in the so-called second-order semicircular variables.

The combinatorial approach based on non-crossing partitions extends to the so-called operator-valued free probability theory \cite{Speicher1998}. In this extension of free probability, the expectation is replaced by a conditional expectation that may take values in a non-commutative complex algebra. The role of semicircular variables is then played by the so-called operator-valued semicircular elements. When the conditional expectation takes values in spaces of complex matrices, operator-valued semicircular elements can be used to describe the asymptotic behavior of block Gaussian matrices \cite{RashidiFarOrabyBrycSpeicher2008}.

It is the purpose of this work to study the global fluctuations of block Gaussian matrices. In order to do so, we introduce a second-order conditional expectation taking values in spaces of complex matrices. Motivated by the previous discussion, we use the phrase
\textit{matricial second-order semicircular elements}
to refer to  
the variables that encode the behavior of block Gaussian matrices. It is important to remark that at the moment we do not have a general operator-valued second-order free probability theory as we only deal with conditional expectations taking values in spaces of complex matrices. Nonetheless, matricial conditional expectations have proved to be very valuable in many random matrix theory applications, e.g., \cite{RashidiFarOrabyBrycSpeicher2008,BelinschiMaiSpeicher2015,DiazPerezAbreu2017}, as well as in more theoretical developments, e.g., \cite{NicaShlyakhtenkoSpeicher2002,HeltonRashidiFarSpeicher2007}. Before discussing our main results, let us consider the following.

\begin{definition}
\label{Def:BlockGaussianMatrices}
Let $d\in\N$ be fixed and let $\sigma:\{1,\ldots,d\}^2\times\{1,\ldots,d\}^2\to\R$ be a given covariance mapping. A selfadjoint $dN\by dN$ random matrix $X_N$ is called a block Gaussian matrix with covariance $\sigma$ if
\eq{X_N=\left(\begin{matrix}X_N^{(1,1)} & X_N^{(1,2)} & \cdots & X_N^{(1,d)}\\ \vdots & \vdots & \ddots & \vdots\\ X_N^{(d,1)} & X_N^{(d,2)} & \cdots & X_N^{(d,d)}\end{matrix}\right)}
where $\{X_N^{(p,q)} : 1\leq p,q\leq d\}$ are $N\by N$ selfadjoint random matrices such that
\eq{\{\Re(X_N^{(p,q)}(i,j)),\Im(X_N^{(p,q)}(i,j)) : 1\leq p,q\leq d,1\leq i,j\leq N\}}
are jointly Gaussian with zero mean and covariance specified by
\eq{\E{X_N^{(p,q)}(k',l')X_N^{(r,s)}(k'',l'')} = \frac{1}{N} \delta_{k',l''} \delta_{l',k''} \sigma(p,q;r,s),}
where $\delta_{k,l}$ equals 1 if $k=l$ and 0 otherwise.
\end{definition}

Note that the Gaussian Unitary Ensemble (GUE) corresponds to the case $d=1$. For each $n\in\N$, let
\eq{\alpha_n = \lim\limits_{N\to\infty} N^{-1}\E{\Tr{X_N^n}}.}
These first-order moments are encoded in the generating function
\eqn{eq:DefG}{G(z) = \sum_{n\geq0} \frac{\alpha_n}{z^{n+1}}.}
We call this generating function the Cauchy transform of the first-order moments of $X_N$, however here we are not concerned with its representation as the Cauchy transform of a measure. Using a conditional expectation taking values in the algebra of $d\by d$ complex matrices, Helton et al. proved the following \cite{HeltonRashidiFarSpeicher2007}. Let $\MAT_d$ be the algebra of $d\by d$ complex matrices and $\eta:\MAT_d\to\MAT_d$ be given by
\eq{\eta(w)(p,q) = \sum_{k,l=1}^d w(k,l) \sigma(p,k;l,q),}
for all $w \in \MAT_d$ and $1\leq p,q\leq d$. For a given $z\in\C$ with $\Im(z)>0$, let $T_z:\MAT_d\to\MAT_d$ be determined by
\eq{T_z(w) = (z{\rm I}_d-\eta(w))^{-1}.}
If $|z|$ is large enough, then the right hand side of \eqref{eq:DefG} converges absolutely and
\eqn{eq:GTrG}{G(z)= d^{-1} \Tr{\calG(z)},}
where $\calG:\C^+\to\MAT_d$ is an analytic function given by
\eqn{eq:FixedPointEquationcalG}{\calG(z) = \lim_{n\to\infty} T_z^{\circ n}(w)}
for any $w\in\MAT_d$ with $\Im w < 0$ (see \cite{HeltonRashidiFarSpeicher2007}). Note that $\calG(z) = T_z(\calG(z))$.

In this paper we extend the previous analysis to the global fluctuations of block Gaussian matrices. Specifically, for each $m,n\in\N$, let
\eq{\alpha_{m,n} = \lim_{N\to\infty} \Cov{\Tr{X_N^m},\Tr{X_N^n}}.}
These second-order moments are encoded in the generating function
\eqn{eq:DefG2}{G_2(z,w) = \sum_{m\geq1} \sum_{n\geq1} \frac{\alpha_{m,n}}{z^{m+1}w^{n+1}}.}
As before, we call this generating function the second-order Cauchy transform of $X_N$. The main result of this paper provides a closed form expression for $G_2$ in terms of the mapping $\calG$. Let $\Sigma\in\MAT_d\otimes\MAT_d$ be the matrix given by $\Sigma(p,q;r,s) = \sigma(p,q;r,s)$, see Section~\ref{Subsection:Matrices}. Also, for every $A\in\MAT_d$ let $\Theta(A)(p,q;r,s)=A(p,r;q,s)$, $A^\Gamma(p,q;r,s)=A(p,q;s,r)$, and $\Phi(A)=\Theta(A^\Gamma)$.

\begin{theorem}
Let $X_N$ be a block Gaussian matrix with covariance $\sigma$. There exists $K\in\R_+$ such that if $z,w\in\C$ with $|z|,|w|>K$, then the right hand side of \eqref{eq:DefG2} converges absolutely and
\eq{G_2(z,w) = \Tr{\calG_2(z,w)},}
where $\calG_2:(\C\bs[-K,K])^2\to\MAT_{d^2}$ is an analytic function given by
\eqn{eq:MainTheorem}{\calG_2(z,w) = \Theta\left(\calG_D(z,z) \left\{\Theta[\calH(z,w)]+\Phi\left[\calH(z,w)^\Gamma(\calG(z)\otimes\calG(w))^\Gamma\calH(z,w)^\Gamma\right]\right\} \calG_D(w,w)^\T\right)}
with
\al{
\calH(z,w) &=\left({\rm I}_{d^2}-\Sigma[\calG(z)\otimes\calG(w)]\right)^{-1}\Sigma,\\
\calG_D(z,w) &= [\calG(z)\otimes\calG(w)] \left({\rm I}_{d^2}-\Sigma [\calG(z)\otimes\calG(w)]\right)^{-1}.
}
\end{theorem}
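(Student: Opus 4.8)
The core of the proof is a genuine combinatorial computation: expand $\Cov{\Tr{X_N^m},\Tr{X_N^n}}$ using the Gaussian Wick formula, identify which pairings survive the large-$N$ limit, and then recognize the resulting generating function as the stated matricial expression. First I would write
\eq{\E{\Tr{X_N^m}\Tr{X_N^n}} = \sum_{\pi} \text{(contribution of }\pi\text{)}}
where $\pi$ runs over pairings of the $m+n$ matrix factors (each factor carrying a $\{1,\dots,d\}$-valued block index pair and an $N$-valued matrix index pair), with the covariance constraint from Definition~\ref{Def:BlockGaussianMatrices} forcing the $N$-indices to be matched in the ``transpose'' pattern $\delta_{k',l''}\delta_{l',k''}$ while contributing the scalar $\sigma(p,q;r,s)$ in the block indices. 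Subtracting $\E{\Tr{X_N^m}}\E{\Tr{X_N^n}}$ removes the pairings that factor through the two cycles separately, so $\alpha_{m,n}$ is governed by the pairings that ``connect'' the two cycles. The standard genus/Euler-characteristic count (as in Mingo--Nica) then shows that in the limit only the \emph{planar} connecting pairings on the annulus survive, and among those the ones through which one can pass from one boundary cycle to the other exactly once.

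The second step is to organize this sum. I would first sum over everything \emph{except} the pair of matrix-factor positions where the two cycles are joined, which produces two ``$\calG$-type'' resolvent chains, one on the $z$-cycle and one on the $w$-cycle. The planar non-crossing structure within each arc of each cycle is exactly what produces the fixed-point operator $T_z$, hence $\calG(z)$, as in \eqref{eq:FixedPointEquationcalG}; this is where I would invoke the Helton et al.\ result to package those sums. The two distinguished joining positions each contribute a factor of $\Sigma$ (from the covariance) together with a tensor $\calG(z)\otimes\calG(w)$ sandwiched between; resumming the geometric series in these joining positions is what yields $\calH(z,w) = ({\rm I}_{d^2}-\Sigma[\calG(z)\otimes\calG(w)])^{-1}\Sigma$ and the companion $\calG_D$. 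The two terms inside the braces in \eqref{eq:MainTheorem} correspond to the two topological ways the annular pairing can route around: the ``parallel'' and ``antiparallel'' identifications of the two cycles' orientations, which is precisely what the $\Gamma$-flip $A^\Gamma(p,q;r,s) = A(p,q;s,r)$ and the reshuffling maps $\Theta$, $\Phi$ are designed to record. The outer $\Theta(\,\cdot\,)$ and the final trace collapse the $\MAT_{d^2}$-valued bookkeeping back to the scalar $G_2$.

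To make this rigorous rather than formal I would introduce, as the abstract advertises, a \emph{matricial second-order conditional expectation}: a bilinear map into $\MAT_d\otimes\MAT_d$ (equivalently $\MAT_{d^2}$) that plays for covariances the role that the scalar-valued second-order expectation plays in Mingo--Speicher theory, and whose value on the matricial second-order semicircular element is computed directly from $\sigma$. I expect the cleanest route is: (i) prove an operator-valued second-order ``$R$-transform''-style or fixed-point identity for this conditional expectation applied to powers of the variable; (ii) solve that identity — it is a linear equation in the unknown second-order Cauchy transform once $\calG(z),\calG(w)$ are known — to get the closed form; (iii) transfer from the limiting non-commutative model back to $X_N$ via the convergence statement, which also supplies the constant $K$ and the absolute convergence of \eqref{eq:DefG2} by dominating the moments $\alpha_{m,n}$ by a geometric bound in $\max(|z|,|w|)$.

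\textbf{Main obstacle.} The delicate point is step (ii)/(iii) of the combinatorial analysis: correctly tracking the matrix (block) indices through an \emph{annular} non-crossing pairing and showing that the bookkeeping collapses exactly into the operators $\Theta$, $\Phi$, $(\cdot)^\Gamma$ and the tensor resolvents, with the right two summands and no cross terms. In the genus-zero annular setting the two boundary cycles can be traversed with the same or opposite induced orientation, and getting the transposes/flips in $\calH(z,w)^\Gamma(\calG(z)\otimes\calG(w))^\Gamma\calH(z,w)^\Gamma$ to land in the right slots — as opposed to an off-by-a-transpose version — requires a careful, orientation-aware version of the usual ``matrix cumulant'' argument. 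I would isolate that as a standalone lemma about the matricial second-order conditional expectation and check it first on small cases ($d=1$, recovering the known scalar second-order semicircle covariance $\alpha_{m,n}$, and $m=n=2$) before committing to the general index chase.
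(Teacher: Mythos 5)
Your outline tracks the paper's architecture at a high level: reduce to the limiting matricial second-order semicircular element via convergence in second-order distribution, express the second-order moments as a sum of matricial cumulants over annular non-crossing pairings (the paper's Proposition~\ref{Proposition:MomentCumulantFormula}), resum, and then extend analytically. You also correctly identify that the two summands inside the braces of \eqref{eq:MainTheorem} come from two topological classes of annular pairings and that the geometric series in the through strings produces $\calH$ and $\calG_D$. However, the step you yourself flag as ``the main obstacle'' is precisely the content of the theorem, and your plan does not supply it. The paper resolves it not by a fixed-point or $R$-transform-style identity that one ``solves as a linear equation'' for the unknown second-order Cauchy transform --- no such equation is derived anywhere in the paper, and it is not clear one exists in this matricial setting --- but by an explicit bijective decomposition: annular pairings are split into Type I and Type II according to the cyclic position of the through strings; each class is factored through a new combinatorial object (double-line pairings, Propositions~\ref{Prop:BijectionSPDP}, \ref{Prop:TIDPSP} and \ref{Prop:TIIDPSP}); and the index chase that places $\Theta$, $\Phi$, $(\cdot)^\Gamma$ and the transposes correctly is carried out in Lemmas~\ref{Lemma:CumulantDPSP}, \ref{Lemma:TypeI} and \ref{Lemma:TypeII} (note in particular that the arguments of $\calG_D$ in \eqref{eq:MainTheorem} arise as $G_D(a,a^\T)$ and $G_D(b,b^\T)^\T$, a transpose structure your ``parallel/antiparallel orientation'' heuristic does not pin down). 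Without this decomposition, or a worked-out substitute, the proposal establishes only the shape of the answer, not the formula.

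The second gap is analytic. Your plan obtains absolute convergence of \eqref{eq:DefG2} and the identity $G_2=\Tr{\calG_2}$ for $|z|,|w|$ large from a geometric bound on $\alpha_{m,n}$ (the paper does this too, using $|\AP_{m,n}|\leq 2^{m+n}$). But the theorem also asserts that the right-hand side of \eqref{eq:MainTheorem} is analytic on all of $(\C\bs[-K,K])^2$, a domain that contains points arbitrarily close to the real axis where $\calG$ is not small and where ${\rm I}_{d^2}-\Sigma[\calG(z)\otimes\calG(w)]$ could a priori fail to be invertible. Ruling out these singularities is the content of Theorem~\ref{Thm:AnalyticalBehavior} and Corollary~\ref{Corollary:AnalyticitySOCauchyTransform}, and it requires a genuinely nontrivial argument (the Lipschitz-extension Lemma~\ref{Lemma:ExtensionAnalyticMappings}, which passes the unbounded functional $\EOP_2$ through Cauchy integrals to continue the identity across the discrete zero set of the relevant determinants). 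A moment bound alone does not reach beyond a neighborhood of infinity, so this part of the statement is unaddressed in your proposal.
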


\begin{proof}
This theorem is a direct consequence of Theorem~\ref{Thm:OpVSOCauchyTransform} and Corollary~\ref{Corollary:AnalyticitySOCauchyTransform} below.
\end{proof}

When $d=1$ and $\sigma(1,1;1,1)=1$, equation \eqref{eq:MainTheorem} becomes the much simpler equation
\eq{G_2(z,w) = \frac{G(z)^2G(w)^2}{[1-G(z)^2][1-G(w)^2][1-G(z)G(w)]^2}.}
Since the Cauchy transform of the semircircle distribution satisfies that \ndsty{-zG'(z)[1-G(z)^2] = G(z)^2}, the previous equation is equivalent to
\eq{G_2(z,w) = \frac{zwG'(z)G'(w)}{[1-G(z)G(w)]^2}.
}
This equation can be found in an unpublished work of Mingo and Nica. Thus, our main theorem can be regarded as a generalization of this formula beyond the case $d=1$. Another important feature of Mingo and Nica's formula is that $G_2$ can be computed from $G$. In a similar way, equation \eqref{eq:MainTheorem} readily shows that $\calG_2$ can be obtained from $\calG$.

On a combinatorial level, the proof of our main theorem relies on three types of pairings: single-line, double-line, and annular pairings. Single-line pairings are the usual non-crossing pairings in free probability theory \cite{NicaSpeicher2006}. Annular pairings are the pairings introduced by Mingo and Nica to study the fluctuations of selfadjoint Gaussian matrices \cite{MingoNica2004}. To the best of the authors's knowledge, this is the first time that double-line pairings, and their generating function, appear in the free probability literature. Using the relations between these three types of pairings, equation \eqref{eq:MainTheorem} is established at the level of formal expressions and then extended to an analytic level.

By definition, the matricial second-order Cauchy transform $\calG_2$ takes matricial arguments, allowing us to establish \eqref{eq:MainTheorem} in greater generality, cf. Theorem~\ref{Thm:OpVSOCauchyTransform} and Theorem~\ref{Thm:AnalyticalBehavior}. As a by-product, it is possible to obtain the second-order Cauchy transform of a non-commutative polynomial evaluated on selfadjoint Gaussian matrices. Let $q=q^*$ be a polynomial in $r$ non-commutative indeterminates $\langle \repdc{x}{1}{r} \rangle$. For notational simplicity, let $x_0 = 1$. Assume that $X_{1,N},\ldots,X_{r,N}$ are $N\by N$ independent selfadjoint Gaussian matrices and let $Q_N := q(X_{1,N},\ldots,X_{r,N})$. The second-order Cauchy transform of $Q_N$ is given by
\eq{G_2^q(z,w) = \sum_{m\geq1} \sum_{n\geq1} \frac{\alpha^q_{m,n}}{z^{m+1}w^{n+1}},}
where \dsty{\alpha^q_{m,n} = \lim_{N\to\infty} \Cov{\Tr{Q_N^m},\Tr{Q_N^n}}}. By the so-called linearization technique \cite{HaagerupThorbjornsen2005,Anderson2013}, there exist $d\in\N$ and selfadjoint $d\by d$ matrices $\repdc{A}{0}{r}$ such that the linear pencil
\eq{L(x_1,\ldots,x_r)=\sum_{i=0}^r A_i \otimes x_i}
satisfies
\eq{(z-q(\repdc{x}{1}{r}))^{-1} = (\Lambda_z-L(x_1,\ldots,x_r))^{-1}_{1,1} ,}
where $\Lambda_z = \diag{z,0,\ldots,0}$. By the definitions of the second-order conditional expectation and the matricial second-order Cauchy transform introduced in Section~\ref{Section:Fluctuations}, we have that at the level of formal expressions
\eqn{eq:RelationSOCTPolynomials}{G^q_2(z,w) = \calG^X_2(\Lambda_z-A_0,\Lambda_w-A_0)(1,1;1,1),}
where $X$ is the matricial second-order semicircular element associated to
\eq{L(X_{1,N},\ldots,X_{r,N}) - A_0 \otimes {\rm I}_N = A_1 \otimes X_{1,N} + \cdots + A_r \otimes X_{r,N}.}
The linearization technique can also be applied to non-commutative rational functions, see, e.g., \cite{HeltonMaiSpeicher2017}. Hence, the relation in \eqref{eq:RelationSOCTPolynomials} holds true, mutatis mutandis, whenever $q=q^*$ is a non-commutative rational function (with a suitable domain).

It is important to remark that there are many key contributions to the study of the global fluctuations of not necessarily (block) Gaussian random matrices. For example: Diaconis and Shahshahani's work on the unitary, orthogonal, symplectic, and symmetric groups \cite{DiaconisShahshahani1994}; Johansson's study of Hermitian matrices with eigenvalue distributions determined by a potential \cite{Johansson1998}; Bai and Silverstein's work on sample covariance matrices \cite{BaiSilverstein2004}, and further extensions \cite{HachemLoubatoNajim2008,HachemKharoufNajimSilverstein2012}; and Anderson and Zeitouni's contributions to the case of band matrix models \cite{AndersonZeitouni2006}.

The paper is organized as follows. In the following section we gather some preliminary results and notation needed through the paper. The matricial second-order analogues of the moments, Cauchy transform, and semicircular elements are introduced in Section~\ref{Section:Fluctuations}. In Section~\ref{Section:FluctuationsBlockRandomMatrices} we establish the connection between the global fluctuations of block Gaussian matrices and the matricial second-order semicircular elements. In Section~\ref{Section:Pairings} we establish the properties of single-line, double-line, and annular pairings required to establish, in Section~\ref{Section:CauchyTransforms}, the combinatorial expressions for the Cauchy transforms associated to these pairings.  Finally, in Section~\ref{Section:AnalyticProperties}, we extend these formulas to an analytic level.

\section{Preliminaries and notation}
\label{Section:Preliminaries}

\subsection{General notation}

For $n\geq1$, we let $[n]=\{\repn{n}\}$. Given $\repdc{i'}{1}{m}\in[d]$, we let $i':[m]\to[d]$ be the function determined by $i'(s)=i'_s$. For $i':[m]\to[d]$ and $i'':[n]\to[d]$, we let $i:[m+n]\to[d]$ be the function given by
\eq{i(s)=i'_s\I{s\leq m}+i''_{s-m}\I{s>m},}
where $\I{}$ is the indicator function. For notational simplicity, we use $i_s$ instead of $i(s)$. Whenever $d$, $m$ and $n$ are clear from the context, we use the notation
\eq{\sum\limits_{i,j} \text{ to represent the sum } \sum_{\repdc{i'}{1}{m}=1}^d \sum_{\repdc{j'}{1}{m}=1}^d \sum_{\repdc{i''}{1}{n}=1}^d \sum_{\repdc{j''}{1}{n}=1}^d.}
For a given set $\calX$, we let $\calX^n = \calX \times \cdots \times \calX$ be its $n$-fold Cartesian product. We assume that all classical random variables belong to a probability space $(\Omega,\calF,\mathbb{P})$ with expectation $\mathbb{E}$. For random variables $\repdc{X}{1}{r}$, we let $k_r(\repdc{X}{1}{r})$ denote their classical $r$-th cumulant. In particular, for random variables $X$ and $Y$, we have that
\eq{k_1(X)=\E{X} \quad \text{ and } \quad k_2(X,Y)=\E{XY}-\E{X}\E{Y}.}

\subsection{Partitions and permutations}

A partition of a non-empty set $V$ is a family $\pi=\{\repdc{V}{1}{S}\}$ such that $\emptyset \neq V_s \subset V$ for all $s\in [S]$, $V_s\cap V_t = \emptyset$ for $s\neq t$, and $\bigcup_{s\in [S]} V_s = V$. We say that $\pi$ is a partition of $n$ points if $V=[n]$. For notational convenience, we define $\emptyset$ as the only partition of 0 points. For an integer $l$ and a partition $\pi=\{\repdc{V}{1}{S}\}$ of a non-empty set $V\subset\Z$, we let $\pi+l$ be the partition of $\{v+l:v\in V\}$ given by
\eq{\pi+l = \big\{ \{v+l : v\in V_s\} : s\in[S]\big\}.}
We let $\emptyset+l:=\emptyset$ for all $l\in\Z$. Let $\pi=\{\repdc{V}{1}{S}\}$ be a partition of a non-empty set $V$; the relation $u \sim_\pi v$ whenever $u,v\in V_s$ for some $s\in[S]$ defines an equivalence relation on $V$; for $W\subset V$, we let $\pi|_{W}$ be the partition of $W$ such that for every $u,v\in W$ we have that $u\sim_{\pi|_W} v$ if and only if $u\sim_{\pi} v$. A non-crossing partition of a non-empty set $V\subset\Z$ is a partition $\pi$ of $V$ such that if $a \sim_\pi b$, $c \sim_\pi d$, and $a<c<b<d$, then $a \sim_\pi b \sim_\pi c \sim_\pi d$. We denote by $\NC(n)$ the set of all non-crossing partitions of $n$ points. We say that a partition $\pi=\{\repdc{V}{1}{S}\}$ is a pairing if $|V_s|=2$ for all $s\in[S]$. We denote by $\NC_2(n)$ the set of all non-crossing pairings of $n$ points. By definition, the partition $\emptyset$ is a non-crossing pairing. Given a pairing of $n$ points $\pi$, we let $\hat{\pi} = \{\{n+1-u,n+1-v\} : \{u,v\}\in\pi\}$.

Let $\calS_V$ be the group of permutations of a non-empty set $V$. For a permutation $\pi$, we let $\#(\pi)$ denote the number of cycles of $\pi$. For $m,n\in\N$, we define the permutation $\gamma_{m,n}:[m+n]\to[m+n]$ by
\eq{\gamma_{m,n}(p) = \begin{cases}p+1 & p\neq m \textnormal{ and } p\neq m+n,\\1 & p=m,\\m+1 & p=m+n.\end{cases}}
By abuse of notation, given a pairing $\pi$ of $V$ we let $\pi:V\to V$ be the permutation of V such that $\pi(u) = v$ whenever $v\neq u$ and $v \sim_\pi u$. A pairing of $m+n$ points $\pi$ is called a $(m,n)$-annular non-crossing pairing if $\#(\gamma_{m,n} \pi) = \frac{m+n}{2}$ and there exists $u,v\in[m+n]$ such that $u \sim_\pi v$ and $u\leq m < v$. If we draw two concentric circles, the exterior one with $m$ points labelled clockwise and the interior one with $n$ points labelled counterclockwise, then the $(m,n)$-annular non-crossing pairings correspond to the pairings of these points that can be drawn without crossings and that have at least one string connecting both circles. Figure~\ref{Fig:ExAnnularPairing} depicts the $(4,4)$-annular non-crossing pairing $\pi=\{\{1,6\},\{2,5\},\{3,4\},\{7,8\}\}$. We denote by $\NC_2(m,n)$ the set of all $(m,n)$-annular non-crossing pairings.

\begin{figure}[ht]
\centering
\includegraphics[scale=0.3]{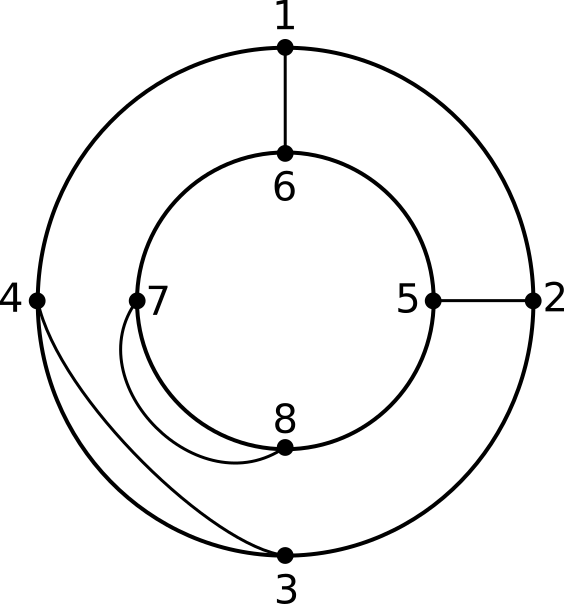}
\caption{Graphical representation of a $(4,4)$-annular non-crossing pairing.}
\label{Fig:ExAnnularPairing}
\end{figure}

\subsection{Matrices}
\label{Subsection:Matrices}

Let $\Mat{m\times n}{\calA}$ denotes the set of all $m\times n$ matrices with entries in $\calA$. For notational simplicity, we write $\Mat{n}{\calA}$ instead of $\Mat{n\times n}{\calA}$. When $\calA=\C$, we simply write $\MAT_{m\by n}$ and $\MAT_n$. For a matrix $A\in\Mat{m\times n}{\calA}$, we let $A(p,q)$ be the $p,q$-entry of $A$. We denote by $\lTr:\Mat{n}{\calA}\to\calA$ the trace function given by \dsty{\Tr{A} = \sum_{k=1}^n A(k,k)}. Also, we let $\|A\|$ be the operator norm of $A\in\MAT_n$. For $A\in\MAT_{m'\times n'}$ and $B\in\MAT_{m''\times n''}$, we let $A\otimes B$ be the $m'm'' \times n'n''$ matrix given by
\eq{A\otimes B = \left(\begin{matrix}A(1,1)B & A(1,2)B & \cdots & A(1,n')B\\A(2,1)B & A(2,2)B & \cdots & A(2,n')B\\ \vdots & \vdots & \ddots & \vdots\\ A(m',1)B & A(m',2)B & \cdots &A(m',n')B\end{matrix}\right).}
For $X\in\MAT_{d^2}$, we let \ndsty{X(p,q;r,s) = X((p-1)d+r,(q-1)d+s)} for all $p,q,r,s\in[d]$. In particular, if $A,B\in\MAT_d$, then
\eqn{eq:TensorProductpqrs}{(A\otimes B)(p,q;r,s) = A(p,q)B(r,s).}
With this notation, if $X,Y\in\MAT_{d^2}$ then, for $p,q,r,s\in[d]$,
\eqn{eq:SumTensorProduct}{(XY)(p,q;r,s) = \sum_{k,l=1}^d X(p,k;r,l)Y(k,q;l,s).}

Assume that $X_N^{(p,q)}$ is an $N\by N$ (random) matrix for each $p,q\in[d]$. We let $X_N=(X_N^{(p,q)})_{p,q}$ be the $dN\by dN$ (random) matrix given by
\eqn{eq:ConventionBlockMatrix}{X_N = \left(\begin{matrix}X_N^{(1,1)} & X_N^{(1,2)} & \cdots & X_N^{(1,d)}\\X_N^{(2,1)} & X_N^{(2,2)} & \cdots & X_N^{(2,d)}\\\vdots & \vdots & \ddots & \vdots\\X_N^{(d,1)} & X_N^{(d,2)} & \cdots & X_N^{(d,d)}\end{matrix}\right).}
In particular, $X_N(p,q;r,s)=X_N^{(p,q)}(r,s)$. Identifying $\MAT_{dN}\cong\MAT_d\otimes\MAT_N$, we let
\eq{(\id_{\MAT_d}\otimes\lTr_{\MAT_N})(X_N) = \left(\Tr{X_N^{(p,q)}}\right)_{p,q=1}^d\in\MAT_d}
where $\id_{\MAT_d}$ and $\lTr_{\MAT_N}$ are the identity on $\MAT_d$ and the trace on $\MAT_N$, respectively. When there is no risk of confusion, we write $\id\otimes\lTr$ without any reference to the specific spaces.

\subsection{Second-order free probability}
\label{Subsection:SOFreeProbability}

A second-order (non-commutative) probability space $(\calA,\FI,\FI_2)$ consists of a unital algebra $\calA$, a tracial linear functional $\FI:\calA\to\C$ with $\F{1}=1$, and a bilinear functional $\FI_2:\calA\times\calA\to\C$ which is tracial in both arguments and satisfies $\FI_2(a,1) = \FI_2(1,a) = 0$ for all $a\in\calA$. For non-commuting random variables $x_1,\ldots,x_N\in\calA$ and $i_1,\ldots,i_m\in[N]$, we let \ndsty{\prod_{w=1}^m x_{i_w}} denote \ndsty{x_{i_1} x_{i_2} x_{i_3} \cdots x_{i_m}} in that specific order.

\begin{definition}
\label{Def:SOSemicircularFamily}
Let $(\calA,\FI,\FI_2)$ be a second-order probability space. We say that a family of selfadjoint operators $x_1,\ldots,x_N\in\calA$ is a second-order semicircular family with covariance $\sigma:[N]\times[N]\to\R$ if for all $m,n\in\N$, $i':[m]\to[N]$ and $i'':[n]\to[N]$ we have that
\aln{
\nonumber \F{\prod_{w=1}^m x_{i'_w}} &= \sum_{\pi\in\NC_2(m)} \prod_{\underset{\pi}{u \sim v}} \sigma(i'_u;i'_v),\\
\label{eq:SOMomentsSOSF} \FI_2\left(\prod_{w=1}^m x_{i'_w},\prod_{w=1}^n x_{i''_w}\right) &= \SUM{\pi\in\NC_2(m,n)} \prod_{\underset{\pi}{u \sim v}} \sigma(i_u;i_v).
}
\end{definition}

\begin{definition}
\label{Def:ConvergenceSODistribution}
For each $N\in\N$, let $\{A_N^{(1)},\ldots,A_N^{(S)}\}$ be a collection of $N\by N$ random matrices. Assume that $(\calA,\FI,\FI_2)$ is a second-order probability space and $\repdc{a}{1}{S}\in\calA$. We say that $\{A_N^{(1)},\ldots,A_N^{(S)}\}$ converges in second-order distribution to $\{\repdc{a}{1}{S}\}$, denoted by $\{A_N^{(1)},\ldots,A_N^{(S)}\} \stackrel{\textnormal{so-dist}}{\longrightarrow} \{\repdc{a}{1}{S}\}$, if for all polynomials $p_1,p_2,\ldots$ in $S$ non-commuting indeterminates we have
\al{
\lim_{N\to\infty} \E{\frac{1}{N} \lTr\left[p_1\big(A_N^{(1)},\ldots,A_N^{(S)}\big)\right]} &= \F{p_1(\repdc{a}{1}{S})},\\
\lim_{N\to\infty} k_2\left(\lTr\left[p_1\big(A_N^{(1)},\ldots,A_N^{(S)}\big)\right],\lTr\left[p_2\big(A_N^{(1)},\ldots,A_N^{(S)}\big)\right]\right) &= \FI_2(p_1(\repdc{a}{1}{S}),p_2(\repdc{a}{1}{S})),
}
and, for $r\geq3$,
\eq{\lim_{N\to\infty} k_r\left(\lTr\left[p_1\big(A_N^{(1)},\ldots,A_N^{(S)}\big)\right],\ldots,\lTr\left[p_r\big(A_N^{(1)},\ldots,A_N^{(S)}\big)\right]\right) = 0.}
\end{definition}

\section{MSO moments and the MSO Cauchy transform}
\label{Section:Fluctuations}

Assume that $(\calA,\FI,\FI_2)$ is a second-order non-commutative probability space. Let $\Mat{d}{\calA}$ be the algebra of $d\by d$ matrices over $\calA$. A natural conditional expectation $\EOP:\Mat{d}{\calA}\to\MAT_d$ is given by
\eq{\Eop{X}(p,q) = \F{X(p,q)},}
for all $X\in\Mat{d}{\calA}$ and $p,q\in[d]$. Motivated by the previous equation, in this section we introduce a {\it conditional} version of $\FI_2$ which leads to the notions of matricial second-order moments and matricial second-order Cauchy transform. As shown in the following section, they can be used to obtain the second-order moments and Cauchy transforms of certain block random matrices. In particular, the second-order behavior of the block Gaussian matrices introduced in Section~\ref{Section:Introduction} can be obtained from the matricial second-order semicircular elements defined below. The main result of this section establishes that the matricial second-order Cauchy transform associated to matricial second-order semicircular elements admits a description in terms of the non-crossing annular pairings.

Recall that $\FI_2$ is bilinear on $\calA\times\calA$, so it can be naturally extended to $\calA\otimes\calA$. We define $\EOP_2:\Mat{d}{\calA}\otimes\Mat{d}{\calA}\to\MAT_d\otimes\MAT_d$ by
\eq{\EOP_2(X\otimes Y) = (\id_{\MAT_d\otimes\MAT_d}\otimes\FI_2)(X\otimes Y),}
for all $X,Y\in\Mat{d}{\calA}$. In the previous equation, we identify $\Mat{d}{\calA}\otimes\Mat{d}{\calA}$ with $(\MAT_d\otimes\MAT_d)\otimes(\calA\otimes\calA)$. In the convention from Section~\ref{Section:Preliminaries}, the previous equation reads as
\eq{\EOP_2(X\otimes Y)(p,q;r,s) = \FI_2(X(p,q),Y(r,s)),}
for all $p,q,r,s\in[d]$. Note that $\EOP_2(X\otimes Y)$ is a $d^2\by d^2$ matrix, in contrast to $\Eop{X}$ which is a $d\by d$ matrix. Observe that, for all $a\in\MAT_d$ and all $X\in\Mat{d}{\calA}$,
\eqn{eq:SOConditionalExpectationI}{\EOP_2(a\otimes X)=\EOP_2(X\otimes a)=0.}
Also, since $\FI_2$ is bilinear, equations \eqref{eq:TensorProductpqrs} and \eqref{eq:SumTensorProduct} readily imply that
\eqn{eq:SOConditionalExpectationII}{\EOP_2(aXb\otimes cYd) = (a\otimes c) \EOP_2(X\otimes Y) (b\otimes d)}
for all $a,b,c,d\in\MAT_d$ and $X,Y\in\Mat{d}{\calA}$. Features \eqref{eq:SOConditionalExpectationI} and \eqref{eq:SOConditionalExpectationII} make $\EOP_2$ a second-order analogue of the conditional expectation $\EOP$: $\Eop{a}=a$ and $\Eop{aXb} = a\Eop{X}b$ for all $a,b\in\MAT_d$ and $X\in\Mat{d}{\calA}$. 

\begin{definition}
Let $X\in\Mat{d}{\calA}$. We define the matricial second-order (MSO) $(m,n)$-moment of $X$ by
\eq{M_{m,n}({\bf a};{\bf b}) = \EOP_2(a_0Xa_1\cdots Xa_m \otimes b_0Xb_1\cdots Xb_n)}
where ${\bf a}=(a_0,\ldots,a_m)\in\MAT_d^{m+1}$ and ${\bf b}=(b_0,\ldots,b_n)\in\MAT_d^{n+1}$. By abuse of notation, we let $M_{m,n}(a;b)$ denote $M_{m,n}(a,\ldots,a;b,\ldots,b)$ for every $a,b\in\MAT_d$.

We define the matricial second-order (MSO) Cauchy transform of $X$ by
\eqn{eq:DefMSOCT}{\calG_2(a,b) = \sum_{m,n\geq1} M_{m,n}(a^{-1};b^{-1})}
for $a,b\in\MAT_d$ invertible.
\end{definition}

Equation~\eqref{eq:DefMSOCT} should be interpreted at the level of formal expressions. In other words, we think of $\calG_2(a,b)$ as an element of
\eq{\Mat{d^2}{\C[[\{(a^{-1})(i,j),(b^{-1})(i,j) : i,j\in[d]\}]]},}
the algebra of $d^2 \by d^2$ matrices over the formal power series in the $2d^2$ commuting variables
\eq{\{(a^{-1})(i,j),(b^{-1})(i,j) : i,j\in[d]\}.}
We only evaluate \eqref{eq:DefMSOCT} whenever $a$ and $b$ are invertible and the series \eqref{eq:DefMSOCT} converges. In Section~\ref{Section:AnalyticProperties} we explore some analytical properties of $\calG_2(a,b)$ when $X\in\Mat{d}{\calA}$ is a matricial second-order semicircular element.

\begin{definition}
\label{Def:MatricialSOSemicircularElement}
We say that $X\in\Mat{d}{\calA}$ is a matricial second-order (MSO) semicircular element with covariance $\sigma:[d]^2\times[d]^2\to\R$ if
\begin{itemize}
	\item[i)] $X(p,q)=X(q,p)$ for all $p,q\in[d]$;
	\item[ii)] $\{X(p,q) \mid p,q\in[d]\}$ is a second-order semicircular family with covariance $\sigma$ (Def.~\ref{Def:SOSemicircularFamily}).
\end{itemize}
\end{definition}

Observe that by Part i), for all $p,q,r,s\in[d]$,
\eqn{eq:saCovarianceMapping}{\sigma(p,q;r,s)=\sigma(p,q;s,r)=\sigma(q,p;r,s)=\sigma(q,p;s,r).}
The next proposition establishes a moment-cumulant-like formula for MSO semicircular elements. Recall that for $i':[m]\to[d]$ and $i'':[n]\to[d]$, we let $i:[m+n]\to[d]$ be given by $i(s)=i'_s\I{s\leq m}+i''_{s-m}\I{s>m}$. Also, recall that \ndsty{\sum\limits_{i,j}} denotes the sum \ndsty{\sum\limits_{\repdc{i'}{1}{m}}\sum\limits_{\repdc{j'}{1}{m}}\sum\limits_{\repdc{i''}{1}{n}}\sum\limits_{\repdc{j''}{1}{n}}} where the indices run from $1$ to $d$.

\begin{proposition}
\label{Proposition:MomentCumulantFormula}
Let $X\in\Mat{d}{\calA}$ be a MSO semicircular element with covariance $\sigma$. Then
\eq{M_{m,n}({\bf a};{\bf b}) = \SUM{\pi\in\NC_2(m,n)} \kappa_\pi({\bf a};{\bf b})} where
\eqn{eq:DefSOCumulants}{\kappa_\pi({\bf a};{\bf b})(p,q;r,s) = \SUM{i,j} \left[\prod_{w=0}^m a_w(j'_w,i'_{w+1})\right] \left[\prod_{w=0}^n b_{n \minus w}^\T (j''_w,i''_{w+1})\right] \prod_{\underset{\pi}{u \sim v}} \sigma(i_u,j_u;i_v,j_v)}
with $j'_0:= p$, $i'_{m+1}:= q$, $j''_0:= s$, and $i''_{n+1}:= r$.
\end{proposition}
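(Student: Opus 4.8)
The plan is to expand the definition of $M_{m,n}({\bf a};{\bf b})$ entrywise and then use the second-order moment formula \eqref{eq:SOMomentsSOSF} from Definition~\ref{Def:SOSemicircularFamily}. First I would write out the $(p,q;r,s)$-entry of $\EOP_2(a_0Xa_1\cdots Xa_m \otimes b_0Xb_1\cdots Xb_n)$ by fully expanding the matrix products over all intermediate indices. Writing the first chain $a_0Xa_1\cdots Xa_m$ as a matrix over $\calA$ and extracting its $(p,q)$-entry introduces a sum over $i'_1,\ldots,i'_m$ and $j'_1,\ldots,j'_m$ of the scalar coefficient $\prod_{w=0}^m a_w(j'_w,i'_{w+1})$ (with the boundary conventions $j'_0=p$, $i'_{m+1}=q$) times the algebra element $\prod_{w=1}^m X(j'_w \text{ or } i'_w \ldots)$; I need to be careful that the indexing lines up with the stated convention, which is why the $a_w$'s carry the pattern $(j'_w,i'_{w+1})$ and the $X$'s get evaluated at the ``interior'' index pairs. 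Similarly the $(r,s)$-entry of $b_0Xb_1\cdots Xb_n$ produces the sum over $i''$ and $j''$; the transpose and reversal $b_{n-w}^\T(j''_w,i''_{w+1})$ in the statement is exactly the bookkeeping needed to match the second chain's natural left-to-right indexing against the reindexed sum, so I would reconcile those two descriptions explicitly.

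Once both chains are expanded, the definition of $\EOP_2$ gives $M_{m,n}({\bf a};{\bf b})(p,q;r,s)$ as a sum over all the $i',j',i'',j''$ indices of the scalar product of the $a$- and $b$-coefficients times $\FI_2$ applied to the two monomials in the $X(k,l)$'s. Here is where I invoke part ii) of Definition~\ref{Def:MatricialSOSemicircularElement}: the family $\{X(k,l)\}$ is a second-order semicircular family, so by the second line of \eqref{eq:SOMomentsSOSF} that $\FI_2$ value equals $\sum_{\pi\in\NC_2(m,n)} \prod_{u\sim_\pi v}\sigma(\text{labels}_u;\text{labels}_v)$, where the labels of position $w$ are the index pair attached to the $w$-th $X$ factor. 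Substituting this in and interchanging the (finite) sums over $\pi$ and over the index functions gives precisely $\sum_{\pi\in\NC_2(m,n)}\kappa_\pi({\bf a};{\bf b})(p,q;r,s)$ with $\kappa_\pi$ as defined in \eqref{eq:DefSOCumulants}, provided the index pair attached to the $w$-th occurrence of $X$ is read as $(i_w,j_w)$ in the combined $[m+n]$ indexing. Finally I would note that the boundary conventions $j'_0=p$, $i'_{m+1}=q$, $j''_0=s$, $i''_{n+1}=r$ in the proposition are the same boundary conventions arising from the entry extraction, so everything matches on the nose and the entrywise identity upgrades to the claimed matrix identity.

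The main obstacle is purely notational rather than conceptual: one must pin down, without sign or transpose errors, how the $w$-th matrix factor $X$ in the chain $a_0Xa_1\cdots$ sits relative to the scalar coefficients and how the covariance argument $\sigma(i_u,j_u;i_v,j_v)$ picks up the correct pair of indices for each leg of the pairing — in particular checking that the ``top'' chain contributes the $i'_w,j'_w$ in the order $\sigma(i_u,j_u;\cdot)$ while the ``bottom'' chain, because of the transpose $b^\T$ and the reversal $b_{n-w}$, still contributes its indices consistently through the combined function $i:[m+n]\to[d]$. I would verify this carefully on the small case $m=n=1$, where $\NC_2(1,1)$ has a single pairing joining the two circles, to make sure the conventions in \eqref{eq:DefSOCumulants} reproduce $\EOP_2(a_0Xa_1\otimes b_0Xb_1)$ correctly, and then the general case follows by the same bookkeeping.
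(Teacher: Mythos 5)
Your overall strategy coincides with the paper's: expand both chains entrywise, apply the bilinearity of $\FI_2$ and the second line of \eqref{eq:SOMomentsSOSF}, then interchange the finite sums over $\pi$ and over the index functions. However, there is one concrete gap: you never invoke Part i) of Definition~\ref{Def:MatricialSOSemicircularElement}, i.e.\ the symmetry $X(p,q)=X(q,p)$, and you dismiss the transpose-and-reversal pattern $b_{n-w}^{\T}(j''_w,i''_{w+1})$ as ``purely notational.'' It is not. If you expand $(b_0Xb_1\cdots Xb_n)(r,s)$ directly, the $w$-th factor $X$ of the bottom chain carries an index pair that, after any renaming of summation variables matching the coefficient product in \eqref{eq:DefSOCumulants}, comes out as $(j''_{n-w+1},i''_{n-w+1})$ --- i.e.\ reversed along the inner circle \emph{and} with the two indices swapped --- rather than the pair $(i''_w,j''_w)$ that \eqref{eq:DefSOCumulants} feeds into $\sigma(i_u,j_u;i_v,j_v)$. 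Repairing this by pure reindexing would force you to (a) use the symmetries of $\sigma$ in \eqref{eq:saCovarianceMapping} to swap indices inside each pair and (b) argue that reversing the labels of the inner circle is a bijection of $\NC_2(m,n)$ onto itself so that the sum over $\pi$ is unchanged; neither step is available without Part i), and (b) is an extra combinatorial claim you have not stated.

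The paper sidesteps all of this with a single preliminary move that your proposal is missing: since $X=X^{\T}$ by Part i), one has $(b_0X\cdots Xb_n)^{\T}=b_n^{\T}X\cdots Xb_0^{\T}$, so $M_{m,n}({\bf a};{\bf b})(p,q;r,s)=\EOP_2\bigl(a_0X\cdots Xa_m\otimes b_n^{\T}X\cdots Xb_0^{\T}\bigr)(p,q;s,r)$. Expanding this already-reversed, already-transposed chain at the entry $(s,r)$ produces exactly the coefficients $b_{n-w}^{\T}(j''_w,i''_{w+1})$, the labels $(i''_w,j''_w)$ in their natural order, and the crossed boundary conventions $j''_0=s$, $i''_{n+1}=r$ --- which, contrary to your last claim, are \emph{not} the conventions arising from extracting the $(r,s)$-entry of the unmodified chain. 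With this one step inserted at the start, the rest of your argument goes through as you describe.
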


The apparently unnecessary transpose in \eqref{eq:DefSOCumulants} will simplify the expression for the double-line Cauchy transform introduced in Section~\ref{Subsection:MCCTDLP}. Note that $M_{m,n}$ and $\kappa_\pi$ depend on $X$ only through $\sigma$.

\begin{proof}[Proof of Proposition~\ref{Proposition:MomentCumulantFormula}]
By Part i) in Def.~\ref{Def:MatricialSOSemicircularElement}, we have that $X=X^\T$. In particular, we obtain that $(b_0X\cdots Xb_n)^\T=b_n^\T X \cdots Xb_0^\T$ and hence
\eqn{eq:Momentmn}{M_{m,n}({\bf a};{\bf b})(p,q;r,s) = \EOP_2(a_0X\cdots Xa_m \otimes b_n^\T X \cdots Xb_0^\T)(p,q;s,r).}
A straightforward computation leads to
\aln{
\nonumber (a_0X\cdots Xa_m)(p,q) &= \SUM{\repdc{i'}{1}{m}}\SUM{\repdc{j'}{1}{m}} a_0(j'_0,i'_1) X(i'_1,j'_1) \cdots X(i'_m,j'_m) a_m(j'_m,i'_{m+1})\\
\label{eq:MomentmnI} &= \SUM{\repdc{i'}{1}{m}}\SUM{\repdc{j'}{1}{m}} \left[\prod_{w=0}^m a_w(j'_w,i'_{w+1})\right] \prod_{w=1}^m X(i'_w,j'_w),
}
with $j'_0 = p$ and $i'_{m+1} = q$. Similarly, we have that
\eqn{eq:MomentmnII}{(b_n^\T X \cdots Xb_0^\T)(s,r)  = \SUM{\repdc{i''}{1}{n}}\SUM{\repdc{j''}{1}{n}} \left[\prod_{w=0}^n b_{n \minus w}^\T(j''_w,i''_{w+1})\right] \prod_{w=1}^n X(i''_w,j''_w),}
with $j''_0=s$ and $i''_{n+1}=r$. Since $\FI_2$ is bilinear, equations \eqref{eq:Momentmn}, \eqref{eq:MomentmnI}, and \eqref{eq:MomentmnII} imply that
\eq{M_{m,n}({\bf a};{\bf b})(p,q;r,s) = \SUM{i,j} \left[\prod_{w=0}^m a_w(j'_w,i'_{w+1})\right] \hspace{-1pt} \left[\prod_{w=0}^n b_{n \minus w}^\T(j''_w,i''_{w+1})\right] \hspace{-1pt} \FI_2\left(\prod_{w=1}^m X(i'_w,j'_w),\prod_{w=1}^n X(i''_w,j''_w)\right).}
By Part ii) in Def.~\ref{Def:MatricialSOSemicircularElement}, the entries of $X$ form a second-order semicircular family. By \eqref{eq:SOMomentsSOSF}, we have that
\eq{M_{m,n}({\bf a};{\bf b})(p,q;r,s) = \SUM{\pi\in\NC_2(m,n)} \SUM{i,j}  \left[\prod_{w=0}^m a_w(j'_w,i'_{w+1})\right] \left[\prod_{w=0}^n b_{n \minus w}^\T (j''_w,i''_{w+1})\right] \prod_{\underset{\pi}{u \sim v}} \sigma(i_u,j_u;i_v,j_v),}
as required.
\end{proof}

By abuse of notation, we let $\kappa_\pi(a;b)$ denote $\kappa_\pi(a,\ldots,a;b,\ldots,b)$ for every $a,b\in\MAT_d$ .

\begin{corollary}
\label{Corollary:MatricialCauchyTransform}
Let $X\in\Mat{d}{\calA}$ be a MSO semicircular element with covariance $\sigma$. Then its MSO Cauchy transform is given by
\eq{\calG_2(a,b) = \sum_{m,n\geq1} \sum_{\pi\in\NC_2(m,n)} \kappa_\pi(a^{-1};b^{-1}).}
\end{corollary}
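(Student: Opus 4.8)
The plan is to simply substitute the moment-cumulant formula from Proposition~\ref{Proposition:MomentCumulantFormula} into the definition of the MSO Cauchy transform. Recall that by Definition~\eqref{eq:DefMSOCT}, the MSO Cauchy transform is
\eq{\calG_2(a,b) = \sum_{m,n\geq1} M_{m,n}(a^{-1};b^{-1}),}
where, by the abuse-of-notation convention, $M_{m,n}(a^{-1};b^{-1})$ stands for $M_{m,n}(a^{-1},\ldots,a^{-1};b^{-1},\ldots,b^{-1})$ with the tuples of the appropriate lengths $m+1$ and $n+1$. Since $X$ is an MSO semicircular element with covariance $\sigma$, Proposition~\ref{Proposition:MomentCumulantFormula} applies and gives, for every fixed $m,n\geq1$,
\eq{M_{m,n}(a^{-1};b^{-1}) = \sum_{\pi\in\NC_2(m,n)} \kappa_\pi(a^{-1};b^{-1}),}
again with the specialized-argument convention $\kappa_\pi(a^{-1};b^{-1}) = \kappa_\pi(a^{-1},\ldots,a^{-1};b^{-1},\ldots,b^{-1})$. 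Substituting this into the display above yields
\eq{\calG_2(a,b) = \sum_{m,n\geq1} \sum_{\pi\in\NC_2(m,n)} \kappa_\pi(a^{-1};b^{-1}),}
which is exactly the claimed formula.

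The only point that needs a word of justification is that this manipulation is legitimate at the level of formal power series, so that the interchange/substitution makes sense inside the ring $\Mat{d^2}{\C[[\{(a^{-1})(i,j),(b^{-1})(i,j) : i,j\in[d]\}]]}$ discussed after Definition~\eqref{eq:DefMSOCT}. But this is immediate: for each fixed pair $(m,n)$ the object $M_{m,n}(a^{-1};b^{-1})$ is already defined as a finite $\MAT_{d^2}$-valued expression whose entries are polynomials in the entries of $a^{-1}$ and $b^{-1}$ (of total degree $m+n$ in fact, by inspection of \eqref{eq:DefSOCumulants}), and Proposition~\ref{Proposition:MomentCumulantFormula} is an identity of such polynomials; since $\NC_2(m,n)$ is a finite set, the inner sum over $\pi$ is a finite sum, and no convergence issue arises beyond the one already built into the definition of $\calG_2$. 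Hence the double series on the right is literally the series defining $\calG_2$, term by term.

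There is essentially no obstacle here — the corollary is a direct unpacking of the proposition. If anything, the only thing to be careful about is bookkeeping of the overloaded notation: making sure that in $\kappa_\pi(a^{-1};b^{-1})$ the constant tuples have the correct lengths dictated by the bidegree $(m,n)$ of $\pi$, and that the transpose appearing on the $b$-arguments in \eqref{eq:DefSOCumulants} is carried along unchanged under the specialization $b_w \mapsto b^{-1}$. Neither of these affects the argument. I would therefore write the proof as a two-line computation, citing Definition~\eqref{eq:DefMSOCT} and Proposition~\ref{Proposition:MomentCumulantFormula}, with at most a parenthetical remark that the identity holds coefficient-wise as formal power series.
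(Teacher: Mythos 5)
Your proof is correct and matches the paper's (implicit) argument exactly: the paper states this as an immediate corollary of Proposition~\ref{Proposition:MomentCumulantFormula} combined with the definition \eqref{eq:DefMSOCT}, with no further proof given, and your substitution plus the remark that everything happens coefficient-wise in the formal power series ring is precisely the intended justification.
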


In Section~\ref{Section:CauchyTransforms} we establish a recursive description for $\kappa_\pi$ which allows us to obtain an explicit expression for $\calG_2$ in terms of $\calG$. In the following section we connect the matricial second-order objects introduced here with the corresponding second-order objects associated to block random matrices.

\section{Fluctuations of block random matrices}
\label{Section:FluctuationsBlockRandomMatrices}

We start this section connecting $\EOP_2:\Mat{d}{\calA}\otimes\Mat{d}{\calA}\to\MAT_d\otimes\MAT_d$ as introduced in the previous section with the covariance of traces of block random matrices. Recall the notion of convergence in second-order distribution from Def.~\ref{Def:ConvergenceSODistribution}.

\begin{proposition}
\label{Prop:FluctuationsBlockMatrices}
For each $N\in\N$, let $\{X_N^{(p,q)},Y_N^{(p,q)} | p,q\in[d]\}$  be $N\by N$ random matrices. Consider the $dN\by dN$ block random matrices $X_N=(X_N^{(p,q)})_{p,q}$ and $Y_N=(Y_N^{(p,q)})_{p,q}$. Assume that
\eq{\{X_N^{(p,q)},Y_N^{(p,q)} | p,q\in[d]\}\stackrel{\textnormal{so-dist}}{\longrightarrow} \{X(p,q),Y(p,q) | p,q\in[d]\}}
in some second-order probability space $(\calA,\FI,\FI_2)$ where $X,Y\in\Mat{d}{\calA}$. Then
\eq{\lim_{N\to\infty} k_2\left(\Tr{X_N},\Tr{Y_N}\right) = \Tr{\EOP_2(X \otimes Y)}.}
\end{proposition}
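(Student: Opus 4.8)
The plan is to unwind both sides in coordinates and match them term by term. First I would write $\Tr{X_N} = \sum_{p=1}^d \Tr{X_N^{(p,p)}}$ and similarly $\Tr{Y_N} = \sum_{r=1}^d \Tr{Y_N^{(r,r)}}$, so that by bilinearity of $k_2$,
\eq{k_2\left(\Tr{X_N},\Tr{Y_N}\right) = \sum_{p,r=1}^d k_2\left(\Tr{X_N^{(p,p)}},\Tr{Y_N^{(r,r)}}\right).}
Each summand on the right is of the form $k_2(\lTr[p_1(\cdot)],\lTr[p_2(\cdot)])$ where $p_1$ and $p_2$ are the (very simple, degree-one) monomials that pick out the $N\by N$ blocks $X_N^{(p,p)}$ and $Y_N^{(r,r)}$ from the list $\{X_N^{(p,q)},Y_N^{(p,q)}\}$. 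Hence the convergence-in-second-order-distribution hypothesis (Def.~\ref{Def:ConvergenceSODistribution}) applies directly and yields
\eq{\lim_{N\to\infty} k_2\left(\Tr{X_N^{(p,p)}},\Tr{Y_N^{(r,r)}}\right) = \FI_2\big(X(p,p),Y(r,r)\big).}

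Next I would identify the right-hand side of the claimed identity in coordinates. By definition of $\EOP_2$ and the convention from Section~\ref{Subsection:Matrices},
\eq{\Tr{\EOP_2(X\otimes Y)} = \sum_{p,r=1}^d \EOP_2(X\otimes Y)(p,p;r,r) = \sum_{p,r=1}^d \FI_2\big(X(p,p),Y(r,r)\big),}
since $\EOP_2(X\otimes Y)(p,q;r,s) = \FI_2(X(p,q),Y(r,s))$ and the trace of a $d^2\by d^2$ matrix $Z$ is $\sum_{p,r} Z(p,p;r,r)$ under this identification. Combining the two displays, taking $N\to\infty$ inside the finite sum over $p,r$, gives exactly $\lim_N k_2(\Tr{X_N},\Tr{Y_N}) = \Tr{\EOP_2(X\otimes Y)}$.

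The argument is essentially bookkeeping; there is no serious analytic obstacle, since the finite sum over block indices commutes freely with the limit. The one point that needs a little care is the \emph{legitimacy of applying Def.~\ref{Def:ConvergenceSODistribution}}: that definition is stated for $k_2$ of traces of \emph{polynomials} in the matrices $\{X_N^{(p,q)},Y_N^{(p,q)}\}$, and here I am using the polynomials $p_1(\{\cdot\}) = X_N^{(p,p)}$, $p_2(\{\cdot\}) = Y_N^{(r,r)}$, which are indeed (monomial) polynomials of degree one in these $2d^2$ non-commuting indeterminates; this is exactly the sort of test polynomial the definition is designed to handle, so the application is clean. I would also note in passing, for later use, that the same block-decomposition argument with $Y_N = X_N$ shows $\lim_N \Var{\Tr{X_N}} = \Tr{\EOP_2(X\otimes X)}$, which is the relevant specialization for block Gaussian matrices once one knows their limiting second-order distribution is that of an MSO semicircular element.
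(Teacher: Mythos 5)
Your proof is correct and takes essentially the same route as the paper's: both reduce the covariance of the full traces to the block-level covariances $k_2\left(\Tr{X_N^{(p,q)}},\Tr{Y_N^{(r,s)}}\right)$, apply the second-order convergence hypothesis to these degree-one monomials, and identify the limits with the entries of $\EOP_2(X\otimes Y)$. The only cosmetic difference is that the paper organizes the bookkeeping through the identity $\Tr{A_N}\Tr{B_N}=\Tr{A_N\otimes B_N}$ applied to the $d\by d$ matrices of block traces (thereby computing all entries of the limiting matrix, not just the diagonal ones $(p,p;r,r)$ that enter the trace), whereas you expand directly by bilinearity of $k_2$.
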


\begin{proof}
For each $N\in\N$, let $A_N=(\id\otimes\lTr)(X_N)$ and $B_N=(\id\otimes\lTr)(Y_N)$. For all $N\in\N$, both $A_N$ and $B_N$ are $d\by d$ matrices with $A_N(p,q) = \lTr(X_N^{(p,q)})$ and $B_N(p,q) = \lTr(Y_N^{(p,q)})$ for all $p,q\in[d]$. Since the trace of the tensor product of two matrices equals the product of the traces of the individual matrices, we have that
\aln{
\nonumber k_2(\Tr{X_N},\Tr{Y_N}) &= k_2(\Tr{A_N},\Tr{B_N})\\
\nonumber &= \E{\Tr{A_N}\Tr{B_N}}-\E{\Tr{A_N}}\E{\Tr{B_N}}\\
\label{eq:CovarianceTensorProduct} &= \Tr{\E{A_N\otimes B_N}-\E{A_N}\otimes \E{B_N}}.
}
Let $C_N=\E{A_N\otimes B_N}-\E{A_N}\otimes \E{B_N}$. Observe that, for all $p,q,r,s\in[d]$,
\al{
C_N(p,q;r,s) &= \E{\Tr{X_N^{(p,q)}}\Tr{Y_N^{(r,s)}}}-\E{\Tr{X_N^{(p,q)}}}\E{\Tr{Y_N^{(r,s)}}}\\
&= k_2\left(\Tr{X_N^{(p,q)}},\Tr{Y_N^{(r,s)}}\right).
}
By the assumed convergence in second-order distribution, the previous equation implies that
\eq{\lim_{N\to\infty} C_N(p,q;r,s) = \FI_2(X(p,q),Y(r,s)) = \EOP_2(X \otimes Y)(p,q;r,s).}
Plugging the previous limit in \eqref{eq:CovarianceTensorProduct}, we obtain that \dsty{\lim_{N\to\infty} k_2\left(\Tr{X_N},\Tr{Y_N}\right) = \Tr{\EOP_2(X \otimes Y)}}.
\end{proof}

The following proposition relates the second-order Cauchy transform and its matricial counterpart. It is important to observe the similarity between the formulas in \eqref{eq:GTrG} and \eqref{eq:G2TrG2}.

\begin{proposition}
\label{Prop:MatricialSOMoments}
Let $X_N=(X_N^{(p,q)})_{p,q}$ be a $dN\by dN$ block random matrix such that
\eq{\{X_N^{(p,q)} : p,q\in[d]\} \stackrel{\textnormal{so-dist}}{\longrightarrow} \{X(p,q) : p,q\in[d]\}}
in some second-order probability space $(\calA,\FI,\FI_2)$ where $X\in\Mat{d}{\calA}$. For $m,n\in\N$, ${\bf a}\in\MAT_d^{m+1}$, and ${\bf b}\in\MAT_d^{n+1}$, let
\eq{A_N = (a_0\otimes{\rm I}_N)X_N\cdots X_N (a_m\otimes{\rm I}_N) \quad \textnormal{ and } \quad B_N = (b_0\otimes{\rm I}_N)X_N\cdots X_N(b_n\otimes{\rm I}_N).}
Similarly, let $A=a_0 X \cdots X a_m$ and $B=b_0 X \cdots X b_n$. Then,
\eq{\lim_{N\to\infty} k_2\left(\Tr{A_N},\Tr{B_N)}\right) = \Tr{\EOP_2(A \otimes B)}.}
In particular, we have that
\eqn{eq:G2TrG2}{G_2(z,w) = \Tr{\calG_2(z{\rm I}_d,w{\rm I}_d)}.}
\end{proposition}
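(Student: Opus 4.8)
The plan is to prove the general statement about $A_N$ and $B_N$ first and then obtain \eqref{eq:G2TrG2} as a special case. The key observation is that $A_N$ and $B_N$ are themselves $dN\by dN$ block random matrices: writing $A_N = (A_N^{(p,q)})_{p,q}$ with $A_N^{(p,q)}$ an $N\by N$ block, and similarly for $B_N$, I first want to identify the limiting operators of these blocks. Using the convention $X_N(p,q;r,s) = X_N^{(p,q)}(r,s)$ together with the multiplication rule \eqref{eq:SumTensorProduct} for the $\MAT_{d^2}\cong\MAT_d\otimes\MAT_d$ indexing, the $(p,q)$-block of $(a_0\otimes {\rm I}_N)X_N\cdots X_N(a_m\otimes {\rm I}_N)$ expands exactly as in \eqref{eq:MomentmnI}: it is a sum over $i'_1,\ldots,i'_m$ and $j'_1,\ldots,j'_m$ in $[d]$ of the scalar factor $\prod_{w=0}^m a_w(j'_w,i'_{w+1})$ times the $N\by N$ matrix $\prod_{w=1}^m X_N^{(i'_w,j'_w)}$, with $j'_0 = p$, $i'_{m+1}=q$. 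In other words, $A_N^{(p,q)}$ is a fixed noncommutative polynomial (with scalar coefficients depending on $p,q$ and ${\bf a}$) in the matrices $\{X_N^{(k,l)} : k,l\in[d]\}$, and the same polynomial evaluated on $\{X(k,l)\}$ gives exactly $A(p,q)$ where $A = a_0 X\cdots X a_m$. The analogous statement holds for $B_N^{(r,s)}$ and $B(r,s)$.

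Granting this, the convergence in second-order distribution of $\{X_N^{(p,q)}\}$ to $\{X(p,q)\}$ transfers to convergence in second-order distribution of the enlarged family $\{A_N^{(p,q)}, B_N^{(r,s)}\}$ to $\{A(p,q), B(r,s)\}$, since second-order distributional convergence is by definition preserved under taking (joint) polynomials: any polynomial in the $A_N^{(p,q)}$'s and $B_N^{(r,s)}$'s is again a polynomial in the $X_N^{(k,l)}$'s, and Def.~\ref{Def:ConvergenceSODistribution} asks exactly that the normalized traces and their classical cumulants of all such polynomials converge to the corresponding $\FI$- and $\FI_2$-values. Now apply Proposition~\ref{Prop:FluctuationsBlockMatrices} to the block matrices $A_N = (A_N^{(p,q)})_{p,q}$ and $B_N = (B_N^{(r,s)})_{r,s}$ with limits $A, B$: this yields
\eq{\lim_{N\to\infty} k_2\left(\Tr{A_N},\Tr{B_N}\right) = \Tr{\EOP_2(A\otimes B)},}
which is the first claim (here I use that $\Tr{(a_0\otimes {\rm I}_N)X_N\cdots(a_m\otimes {\rm I}_N)} = \Tr{A_N}$ under the identification $\MAT_{dN}\cong\MAT_d\otimes\MAT_N$, which is immediate).

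For \eqref{eq:G2TrG2}, specialize to $a_0 = \cdots = a_m = {\rm I}_d$ and $b_0 = \cdots = b_n = {\rm I}_d$, so that $A_N = X_N^{m+1}$ restricted appropriately — more precisely $A_N = X_N^m$ up to the trivial identity factors, giving $\Tr{A_N} = \Tr{X_N^m}$ and $\Tr{B_N} = \Tr{X_N^n}$ — while $A = X^m$, $B = X^n$, and $\EOP_2(X^m\otimes X^n) = M_{m,n}({\rm I}_d;{\rm I}_d)$ by the definition of the MSO moments. Hence $\alpha_{m,n} = \lim_N \Cov{\Tr{X_N^m},\Tr{X_N^n}} = \Tr{M_{m,n}({\rm I}_d;{\rm I}_d)}$, and substituting $a = z^{-1}{\rm I}_d = (z{\rm I}_d)^{-1}$, $b = (w{\rm I}_d)^{-1}$ into the definition \eqref{eq:DefMSOCT} of $\calG_2$ and comparing with \eqref{eq:DefG2} term by term (each $M_{m,n}((z{\rm I}_d)^{-1};(w{\rm I}_d)^{-1}) = z^{-(m+1)}w^{-(n+1)} M_{m,n}({\rm I}_d;{\rm I}_d)$ by the scaling built into $M_{m,n}$) gives $G_2(z,w) = \Tr{\calG_2(z{\rm I}_d,w{\rm I}_d)}$ as an identity of formal power series.

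The main obstacle is the bookkeeping in the first paragraph: one must check carefully that expanding $(a_0\otimes{\rm I}_N)X_N\cdots X_N(a_m\otimes{\rm I}_N)$ in the block structure really produces, blockwise, the same polynomial as the one defining $A(p,q)$, keeping the index conventions of Section~\ref{Subsection:Matrices} consistent (in particular the $(p,q;r,s)$ versus matrix-entry identifications and the order of multiplication). This is entirely routine given \eqref{eq:MomentmnI}–\eqref{eq:MomentmnII} from the proof of Proposition~\ref{Proposition:MomentCumulantFormula}, which is exactly the needed computation carried out at the level of operators; the only new point is to observe that the identical algebra holds verbatim with $X$ replaced by $X_N$ and the $\calA$-valued entries replaced by $N\by N$ matrix entries. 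Once that identification is in hand, everything else is a direct invocation of Proposition~\ref{Prop:FluctuationsBlockMatrices} and the stability of second-order convergence under polynomials.
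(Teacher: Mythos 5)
Your proposal is correct and follows essentially the same route as the paper's proof: expand the blocks $A_N^{(p,q)}$, $B_N^{(r,s)}$ as polynomials in the $X_N^{(k,l)}$ (the computation in \eqref{eq:MomentmnI}--\eqref{eq:MomentmnII}), transfer the second-order convergence to the enlarged family, invoke Proposition~\ref{Prop:FluctuationsBlockMatrices}, and then specialize to scalar arguments for \eqref{eq:G2TrG2}. The only cosmetic difference is that the paper plugs in $a_w=z^{-1}{\rm I}_d$ directly rather than taking $a_w={\rm I}_d$ and factoring out the scaling, which is equivalent.
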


\begin{proof}
Using the convention adopted in \eqref{eq:ConventionBlockMatrix}, we have that
\al{
A_N^{(p,q)} &= \SUM{\repdc{i'}{1}{m}}\SUM{\repdc{j'}{1}{m}} (a_0\otimes{\rm I}_N)^{(p,i'_1)} X_N^{(i'_1,j'_1)} \cdots X_N^{(i'_n,j'_n)} (a_m\otimes{\rm I}_N)^{(j'_n,q)}\\
&= \SUM{\repdc{i'}{1}{m}}\SUM{\repdc{j'}{1}{m}} (a_0(p,i'_1){\rm I}_N) X_N^{(i'_1,j'_1)} \cdots X_N^{(i'_n,j'_n)} (a_m(j'_n,q){\rm I}_N)\\
&= \SUM{\repdc{i'}{1}{m}}\SUM{\repdc{j'}{1}{m}} \left[\prod_{w=0}^m a_w(j'_w,i'_{w\plus1})\right]  \prod_{w=1}^m X_N^{(i'_w,j'_w)},
}
where $j'_0 := p$ and $i'_{n+1} := q$. Similarly,
\eq{B_N^{(r,s)} = \SUM{\repdc{i''}{1}{n}}\SUM{\repdc{j''}{1}{n}} \left[\prod_{w=0}^n b_w(j''_w,i''_{w\plus1})\right] \prod_{w=1}^n X_N^{(i''_w,j''_w)},}
where $j''_0 := r$ and $i''_{n+1} := s$. A straightforward computation shows that
\al{
A(p,q) &= \SUM{\repdc{i'}{1}{m}}\SUM{\repdc{j'}{1}{m}} \left[\prod_{w=0}^m a_w(j'_w,i'_{w\plus1})\right]  \prod_{w=1}^m X(i'_w,j'_w),\\
B(r,s) &= \SUM{\repdc{i''}{1}{n}}\SUM{\repdc{j''}{1}{n}} \left[\prod_{w=0}^n b_w(j''_w,i''_{w\plus1})\right] \prod_{w=1}^n X(i''_w,j''_w),
}
where $j'_0 = p$, $i'_{n+1} = q$, $j''_0 = r$, and $i''_{n+1} = s$. The assumed convergence in second-order distribution immediately implies that $\{A_N^{(p,q)},B_N^{(p,q)} | p,q\in[d]\}$ converges in second-order distribution to $\{A(p,q),B(p,q) | p,q\in[d]\}$. By the previous proposition, 
\eq{\lim_{N\to\infty} k_2\left(\Tr{A_N},\Tr{B_N}\right) = \Tr{\EOP_2(A \otimes B)}.}
In particular, we have that \dsty{\alpha_{m,n} := \lim_{N\to\infty} k_2(\Tr{X_N^m},\Tr{X_N^n}) = \Tr{\EOP_2(X^m \otimes X^n)}} and
\al{
G_2(z,w) &= \sum_{m,n\geq1} z^{-(m+1)} w^{-(n+1)} \alpha_{m,n}\\
&= \sum_{m,n\geq1} \Tr{\EOP_2(z^{-1}X \cdots Xz^{-1} \otimes w^{-1}X \cdots Xw^{-1})}\\
&= \Tr{\calG_2(z{\rm I}_d,w{\rm I}_d)},
}
as required.
\end{proof}

The following is a restatement of Theorem 3.1 \cite{MingoSpeicher2006} in our notation.

\begin{theorem}
Let $X_N=(X_N^{(p,q)})_{p,q}$ be block Gaussian matrix (Def.~\ref{Def:BlockGaussianMatrices}) with covariance $\sigma:[d]^2\times[d]^2\to\R$. If $X\in\Mat{d}{\calA}$ is a MSO semicircular element with covariance $\sigma$, then
\eq{\{X_N^{(p,q)} : p,q\in[d]\} \stackrel{\textnormal{so-dist}}{\longrightarrow} \{X(p,q) : p,q\in[d]\}.}
\end{theorem}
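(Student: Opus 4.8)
The plan is to verify the defining identities of Definition~\ref{Def:SOSemicircularFamily} directly for the entries $\{X_N^{(p,q)}\}$, reducing everything to the known combinatorial expansion of joint moments and mixed cumulants of a Gaussian matrix in terms of pairings. For the first-order part, I would expand $\E{N^{-1}\lTr(p_1(X_N^{(p,q)}))}$ into a sum over words in the entries, apply Wick's formula to the jointly Gaussian entries, and observe that by the covariance prescription $\E{X_N^{(p,q)}(k',l')X_N^{(r,s)}(k'',l'')}=N^{-1}\delta_{k',l''}\delta_{l',k''}\sigma(p,q;r,s)$ each pairing contributes a product of $\sigma$'s times a power of $N$ coming from the free index sums; standard genus/Euler-characteristic counting (as in \cite{MingoNica2004}) shows only the non-crossing (planar) pairings survive in the $N\to\infty$ limit, yielding exactly the first identity in \eqref{eq:SOMomentsSOSF} with the indices $i'_u$ tracking which block each leg lies in. Since this is precisely the content of the Helton et al.\ computation already recalled around \eqref{eq:GTrG}--\eqref{eq:FixedPointEquationcalG}, for polynomials it is the known block-matrix first-order result, so I would cite \cite{HeltonRashidiFarSpeicher2007} (or \cite{RashidiFarOrabyBrycSpeicher2008}) and not redo it.

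For the second-order part I would do the analogous computation for $k_2(\lTr(p_1),\lTr(p_2))$. Writing both traces as index sums over entries and using the formula for the second classical cumulant of two polynomials in jointly Gaussian variables, the connected part is a sum over pair partitions of the combined index set that link the two traces; after the Wick expansion and the substitution of the covariance, each such pairing again contributes $\prod_{u\sim_\pi v}\sigma(i_u,j_u;i_v,j_v)$ times $N^{\chi}$ where $\chi$ is determined by the topology of the surface obtained by gluing the two polygons (the two traces) along the pairing. The key point, exactly as in \cite{MingoNica2004,MingoSpeicher2006}, is that connectedness forces at least one string between the two circles, and the leading ($N^0$) contributions are precisely the $(m,n)$-annular non-crossing pairings $\NC_2(m,n)$; all higher-genus annular pairings are suppressed. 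Tracking the block indices through the gluing gives exactly the second identity in \eqref{eq:SOMomentsSOSF}. Again this is the block-matrix analogue of Theorem~3.1 of \cite{MingoSpeicher2006}, so the honest statement of the proof is: apply that theorem's argument verbatim with the scalar covariance replaced by the block covariance $\sigma(p,q;r,s)$, the only change being that the edge weight of a pairing is now a product of $\sigma$'s indexed by the blocks at the endpoints rather than a single constant.

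Finally I would verify the vanishing of all higher cumulants $k_r$, $r\ge 3$: the same surface-topology estimate shows each connected pairing diagram joining $r\ge 3$ polygons contributes $N^{2-r-2g}\le N^{-1}\to 0$, so $\lim_N k_r(\lTr p_1,\dots,\lTr p_r)=0$; this is again standard and covered by the cited references. The selfadjointness condition i) of Definition~\ref{Def:MatricialSOSemicircularElement}, $X(p,q)=X(q,p)$, holds because $X_N$ is selfadjoint forces $X_N^{(q,p)}=(X_N^{(p,q)})^*$ and the entries are complex Gaussian with the stated covariance, so the limiting family is symmetric in the same way; and the symmetry \eqref{eq:saCovarianceMapping} of $\sigma$ is built into Definition~\ref{Def:BlockGaussianMatrices} via the $\delta_{k',l''}\delta_{l',k''}$ structure (and must in any case hold for the hypothesis to be consistent).

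The main obstacle is bookkeeping rather than conceptual: one must carefully set up the correspondence between a pairing $\pi$ of the $m+n$ legs, the induced gluing of the two labelled polygons, the resulting power of $N$, and the block-index labels $i_u,j_u$ that appear in $\sigma(i_u,j_u;i_v,j_v)$, and check that the surviving diagrams are exactly $\NC_2(m)$ in the first-order case and $\NC_2(m,n)$ in the second-order case. Since every piece of this — the Wick expansion, the genus expansion, the identification of planar/annular pairings as the leading contributions, and the vanishing of higher cumulants — is already established in \cite{MingoNica2004,MingoSpeicher2006,CollinsMingoSniadySpeicher2007}, and since the block structure only decorates edges with constants, the cleanest write-up cites Theorem~3.1 of \cite{MingoSpeicher2006} and indicates the (purely notational) modification, rather than reproducing the genus computation.
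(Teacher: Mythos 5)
Your proposal is correct and matches the paper's treatment: the paper gives no independent argument for this theorem, stating only that it is a restatement of Theorem~3.1 of \cite{MingoSpeicher2006} in the present notation, which is exactly the citation your write-up converges on. Your sketch of the underlying Wick/genus-expansion argument (planar pairings at first order, annular pairings at second order, vanishing of higher cumulants, with block indices merely decorating the edge weights via $\sigma$) is a faithful account of what that cited theorem's proof does.
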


Hence, by the previous theorem and \eqref{eq:G2TrG2}, the second-order Cauchy transform of a block Gaussian matrix can be obtained from the MSO Cauchy transform of the limiting MSO semicircular element. In what follows we focus on computing the latter.

\section{Single-line, double-line and annular pairings}
\label{Section:Pairings}

In this section we consider three types of pairings: single-line, double-line, and annular. These pairings, and their relations, play an important role in the computation of the MSO Cauchy transform of MSO semicircular elements. The combinatorial facet of free probability theory centers in the notion of non-crossing partitions. Whenever we talk about partitions (pairings), we implicitly refer to non-crossing partitions (pairings).

\subsection{Single-line pairings}

Let $\NC_2(n)$ be the set of non-crossing pairings of $n$ points. In the following section we introduce a type of pairing called double-line. To make a clear distinction between the different types of pairings, we refer to the usual non-crossing pairings as {\it single-line pairings}. For uniformity of notation, we let $\SP_n:=\NC_2(n)$ for all $n\geq1$ and $\SP_0=\{\emptyset\}$. The set of all single-line pairings $\SP$ is then given by
\eq{\SP = \bigcup_{n\geq0} \SP_n.}

A word about the name single-line pairing is in order. The nesting of the blocks of a given partition is not particularly important when computing its corresponding scalar-valued free cumulant. However, this nesting is crucial in the operator-valued setting. This feature is well known in the literature \cite{Speicher1998}, but for concreteness consider the following.

\begin{example}
For a semicircular element $x\in\calA$, its scalar-valued free cumulants satisfy
\eq{\kappa_{\pi_1}^x = \F{x^2}^n = \kappa_{\pi_2}^x}
for all $\pi_1,\pi_2\in\SP_{2n}$. Let $\pi_1=\{\{1,2\},\{3,4\}\}$ and $\pi_2 = \{\{1,4\},\{2,3\}\}$. For an $\MAT_d$-valued semicircular element $X$ with covariance $\sigma$, its $\MAT_d$-valued free cumulants satisfy
\eq{\kappa_{\pi_1}^X(a,a,a,a,a) = a\eta(a)a\eta(a)a \quad \textnormal{ and } \quad \kappa_{\pi_2}^X(a,a,a,a,a) = a\eta(a\eta(a)a)a}
for all $a\in\MAT_d$ where $\eta:\MAT_d\to\MAT_d$ is given by \ndsty{\eta(a)(p,q) = \sum_{k,l} a(k,l) \sigma(p,k;l,q)}. In general, we have that $\kappa_{\pi_1}^X(a,a,a,a,a) \neq \kappa_{\pi_2}^X(a,a,a,a,a)$.
\end{example}

For notational convenience, in this paper we think of $\kappa_\pi$, with $\pi\in\SP_{2n}$, as a $2n+1$-linear map. In the usual convention, e.g., Section~9.1 in \cite{MingoSpeicher2017}, this cumulant is actually a $2n-1$-linear map. Note that these two conventions differ by a scalar matrix $a$ at the beginning and the end of the cumulant, as seen in the previous example.

In scalar-valued free probability theory it is customary to represent the elements of $\NC(n)$ as non-crossing partitions of $n$ points in the circle. In this graphical representation the nested structure of non-crossing partitions is somehow buried. As recalled in the previous example, this nesting is crucial in the operator-valued setting. To emphasize this feature, we represent the elements of $\SP_n$ as (non-crossing) pairings of $n$ points located along a single line, as depicted in Figure~\ref{Fig:SingleLinePairing}.

\begin{figure}[ht]
\centering
\includegraphics[scale=0.4]{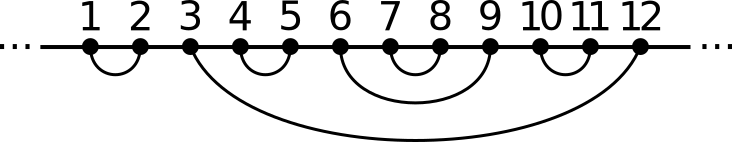}
\caption{Graphical representation of a single-line pairing.}
\label{Fig:SingleLinePairing}
\end{figure}

\subsection{Double-line pairings}
\label{Subsection:DoubleLinePairings}

In this section we introduce a type of pairing called double-line. As the name suggests, these pairings may be represented as pairings of points located along two parallel lines.

\begin{definition}
For $m,n\geq0$, we define the set of double-line pairings of $m$ and $n$ points $\DP_{m,n}$ as
\eq{\DP_{m,n} := \{(m,n,\pi) : \pi\in\NC_2(m+n)\}.}
When there is no risk of confusion, we write $\pi\in\DP_{m,n}$ instead of $(m,n,\pi)\in\DP_{m,n}$. For $\pi\in\DP_{m,n}$, a pair $\{u,v\}\in\pi$ is called a through string if either $u\leq m < v$ or $v\leq m < u$. Let $\DP^{(k)}_{m,n}$ be the set of double-line pairings of $m$ and $n$ points with exactly $k$ through strings. The set of all double-line pairings with exactly $k$ through-strings and the set of all double pairings are then given by
\eq{\DP^{(k)} = \bigcup_{m,n\geq0} \DP_{m,n}^{(k)} \quad \textnormal{ and } \quad \DP = \bigcup_{m,n\geq0} \DP_{m,n}.}
\end{definition}

Note that if $m_1\neq m_2$, and $m_1+n_1=m_2+n_2$, and $\pi\in\NC_2(m_1+n_1)$, then $(m_1,n_1,\pi)$ and $(m_2,n_2,\pi)$ are different elements of $\DP$. A word about the name double-line pairing is in order. We think of the elements of $\DP_{m,n}$ as non-crossing pairings of points located along two parallel lines, one with $m$ points labelled in increasing order and the other with $n$ points labelled in decreasing order. In this context, a through string is a pair that connects one line to the other.

\begin{example}
\label{Example:DoubleLinePairings}
Consider the non-crossing pairing
\eq{\pi=\{\{1,2\},\{3,12\},\{4,5\},\{6,9\},\{7,8\},\{10,11\}\}\in\NC_2(12).}
A graphical representation of $\pi$ is provided in Figure~\ref{Fig:SingleLinePairing}. The double-line pairings $\pi_1=(4,8,\pi)\in\DP_{4,8}$ and $\pi_2=(6,6,\pi)\in\DP_{6,6}$ are depicted in Figure~\ref{Fig:DoubleLinePairing}.
\end{example}

\begin{figure}[ht]
\centering
\includegraphics[scale=0.4]{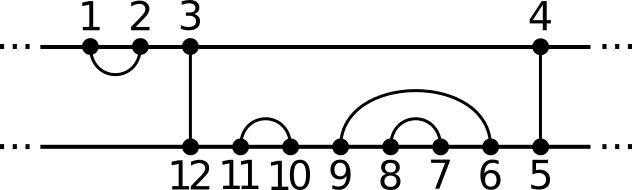} \quad \quad \quad \includegraphics[scale=0.4]{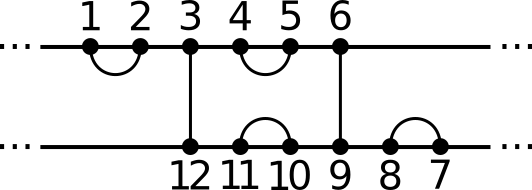}
\caption{Examples of double-line pairings.}
\label{Fig:DoubleLinePairing}
\end{figure}

The following proposition, whose proof is a routine verification, provides a useful relation between single-line and double-line pairings. Recall that for a given set $\calX$, we let $\calX^n = \calX \times \cdots \times \calX$ be its $n$-fold Cartesian product.

\begin{proposition}
\label{Prop:BijectionSPDP}
For $k\geq 0$, consider the function $\Psi^{(k)}:\SP^{k+1}\times\SP^{k+1}\to\DP^{(k)}$ given by
\al{
\Psi^{(k)}(\repdc{\pi'}{0}{k};\repdc{\pi''}{k}{0}) &= \bigcup_{i=0}^k \left(\pi'_i + i + \sum_{j=0}^{i} m_j - m_i\right)\cup \bigcup_{i=0}^k \left(\pi''_{k \minus i} + m + i + \sum_{j=0}^i n_{k \minus j} - n_{k \minus i}\right) \cup \\
& \quad \quad \quad \cup \left\{ \left\{i+\sum_{j=0}^{i-1} m_j,m+n-\left(i - 1 + \sum_{j=0}^{i-1} n_j\right) \right\} : i\in[k]\right\} \in \DP_{m,n}^{(k)},
}
where $\pi'_j\in\SP_{m_j}$ and $\pi''_j\in\SP_{n_j}$ for all $0\leq j\leq k$, $m=k+\sum_{j=0}^k m_j$, and $n=k+\sum_{j=0}^k n_j$. The mapping $\Psi^{(k)}$ is a bijection between $\SP^{k+1}\times\SP^{k+1}$ and $\DP^{(k)}$.
\end{proposition}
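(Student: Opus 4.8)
The plan is to construct an explicit two-sided inverse of $\Psi^{(k)}$ and check that $\Psi^{(k)}$ and this inverse land in the claimed sets; since the claim is a bijection between finite combinatorial objects, this is essentially a bookkeeping argument about where through strings of a double-line pairing sit and what they cut the two lines into. First I would verify that $\Psi^{(k)}$ is well-defined, i.e., that its output is indeed an element of $\DP_{m,n}^{(k)}$: the formula superimposes $k+1$ translated copies of single-line pairings $\pi'_i$ along the first line, $k+1$ translated copies $\pi''_{k-i}$ along the second line, and then adds exactly the $k$ through strings $\{i+\sum_{j<i}m_j,\ m+n-(i-1+\sum_{j<i}n_j)\}$; one checks that the translated index blocks exactly tile $[m]$ and $[m+1,m+n]$ respectively, with the left endpoints of the through strings occupying the $k$ gap positions on the first line and the right endpoints occupying the $k$ gap positions on the second line, and that non-crossingness is preserved because each $\pi'_i$ (resp. $\pi''_i$) is inserted into a single interval between consecutive through-string endpoints and the through strings themselves are nested. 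The count of through strings is exactly $k$ by construction.

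Next I would define the inverse map $\Phi^{(k)}:\DP^{(k)}\to\SP^{k+1}\times\SP^{k+1}$. Given $(m,n,\pi)\in\DP_{m,n}^{(k)}$, list the through strings; because $\pi$ is non-crossing on the two lines (equivalently, on the circle obtained by joining the two lines at their ends), the $k$ left endpoints on the first line and the $k$ right endpoints on the second line are forced to be matched in a nested fashion, so the through strings are totally ordered and cut the first line into $k+1$ (possibly empty) intervals $I_0,\dots,I_k$ and the second line into $k+1$ intervals $J_0,\dots,J_k$. Each non-through string of $\pi$ lies entirely within one $I_i$ or one $J_i$ (again by non-crossingness, since it cannot straddle a through-string endpoint), so $\pi$ restricted to $I_i$ gives, after the obvious order-preserving relabeling to $[|I_i|]$, a single-line pairing $\pi'_i\in\SP_{|I_i|}$, and similarly $\pi''_{k-i}\in\SP_{|J_i|}$ from $J_i$. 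Set $m_i=|I_i|$, $n_i=|J_i|$; then $\Phi^{(k)}(\pi)=(\pi'_0,\dots,\pi'_k;\pi''_k,\dots,\pi''_0)$. One verifies $m=k+\sum m_j$ and $n=k+\sum n_j$ automatically, and that $\Phi^{(k)}\circ\Psi^{(k)}=\mathrm{id}$ and $\Psi^{(k)}\circ\Phi^{(k)}=\mathrm{id}$ by matching the translation offsets in the definition of $\Psi^{(k)}$ with the interval endpoints produced by $\Phi^{(k)}$ — both are just prefix sums of the block sizes shifted by the number of preceding through strings.

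The main obstacle, such as it is, is purely notational: carefully pinning down the index arithmetic so that the translation amounts $i+\sum_{j=0}^{i}m_j-m_i$ (for the first line) and $m+i+\sum_{j=0}^i n_{k-j}-n_{k-i}$ (for the second line) in $\Psi^{(k)}$ are seen to coincide with the starting positions of the intervals $I_i$, $J_i$, keeping straight that the second line is labeled in decreasing order (hence the reversal $i\mapsto k-i$ in the $\pi''$ factors and the $m+n-(\cdots)$ in the through-string endpoints). The genuinely substantive point — that every block of a non-crossing pairing on two lines either is a through string or lies inside one of the intervals cut out by the through strings, and that the through strings are nested — is an immediate consequence of the definition of non-crossing, so the proposition is, as stated, a routine verification; I would simply present the two maps and note that composing them in either order returns the identity.
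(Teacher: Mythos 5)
Your argument is correct and is precisely the "routine verification" the paper alludes to without writing out: the paper offers no proof of Proposition~\ref{Prop:BijectionSPDP}, and your construction of the inverse by cutting a double-line pairing at its (necessarily nested) through strings, together with the observation that non-through pairs cannot straddle a through-string endpoint, is the intended argument. The index bookkeeping you flag (the prefix-sum offsets and the reversal on the second line) is the only content, and you have identified it accurately.
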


Naturally, there exists a bijection between $\SP^{k+1}\times\SP^{k+1}$ and $\DP^{(k)}$ as both sets are countably infinite. The importance of the previous proposition comes from the fact that the mapping $\Psi^{(k)}$ allows us to express the cumulants of double-line pairings in terms of the cumulants of single-line pairings, as shown in Section~\ref{Subsection:MCCTDLP}.

In graphical terms, $\Psi^{(k)}$ glues the single-line pairings $\repdc{\pi'}{0}{k},\repdc{\pi''}{k}{0}$ successively and intertwines them with through strings, see Figure~\ref{Fig:BijectionSPDP}. Note that $\pi'_i$ and $\pi''_i$ are in front of each other. E.g., if $\pi_1$ and $\pi_2$ are the double-line pairings in Example~\ref{Example:DoubleLinePairings}, then
\al{
\pi_1 &= \Psi^{(2)}(\{\{1,2\}\},\emptyset,\emptyset;\emptyset,\{\{1,4\},\{2,3\},\{5,6\}\},\emptyset),\\
\pi_2 &= \Psi^{(2)}(\{\{1,2\}\},\{\{1,2\}\},\emptyset;\{\{1,2\}\},\{\{1,2\}\},\emptyset).
}
Observe that in the double-line representation of $\Psi^{(k)}(\repdc{\pi'}{0}{k};\repdc{\pi''}{k}{0})$ the depictions of $\repdc{\pi''}{k}{0}$ are graphically reversed.

\begin{figure}[ht]
\centering
\includegraphics[scale=0.4]{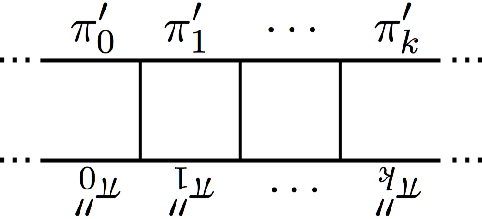}
\caption{Graphical representation of $\Psi^{(k)}(\repdc{\pi'}{0}{k};\repdc{\pi''}{k}{0})$.}
\label{Fig:BijectionSPDP}
\end{figure}

A family of double-line pairings that plays an important role in the computation of the MSO Cauchy transform of MSO semicircular elements is
\eq{\DP^{||} := \bigcup_{m,n\geq1} \left\{\pi\in\DP_{m,n} : \{1,m+n\},\{m,m+1\}\in\pi\right\}.}
Note that there is a very natural bijection between $\DP$ and $\DP^{||}\bs\DP_{1,1}$ which is implemented as follows. For a given $m,n\in\N$, let $h:\N\to\N$ be given by $h(u) = 1 + u + 2 \cdot \I{u>m}$. Then, the mapping
\eqn{eq:BijectionDPDP}{\DP_{m,n}\ni\pi \mapsto \{\{1,m+n+4\},\{m+2,m+3\}\}\cup\{\{h(u),h(v)\} : \{u,v\}\in\pi\}=\tilde{\pi}\in\DP_{m+2,n+2}}
is a bijection between $\DP_{m,n}$ and $\DP_{m+2,n+2}\cap\DP^{||}$. A graphical representation of the action of this mapping is shown in Figure~\ref{Fig:BijectionDPDP}.

\begin{figure}[ht]
\centering
\includegraphics[scale=0.4]{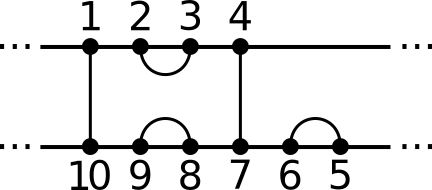} \quad \quad \includegraphics[scale=0.4]{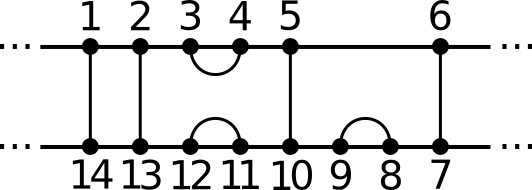}
\caption{A double-line pairing and its image under the isomorphism in \eqref{eq:BijectionDPDP}.}
\label{Fig:BijectionDPDP}
\end{figure}

\subsection{Annular Pairings}

Let $\NC_2(m,n)$ be the set of non-crossing annular pairings of $m$ exterior and $n$ interior points. For uniformity of notation, we let $\AP_{m,n}:=\NC_2(m,n)$ for all $m,n\geq1$. For $\pi\in\AP_{m,n}$, a pair $\{u,v\}\in\pi$ is called a through string if either $u\leq m < v$ or $v\leq m < u$. Let $\AP_{m,n}^{(k)}$ be the set of annular pairings of $m$ exterior and $n$ interior points with exactly $k$ through strings. The set of all annular pairings with exactly $k$ through strings and the set of all annular pairings are then given by
\eq{\AP^{(k)} = \bigcup_{m,n\geq 1} \AP_{m,n}^{(k)} \quad \textnormal{ and } \quad \AP = \bigcup_{m,n\geq 1} \AP_{m,n}.}
Recall that, by definition, a pairing in $\AP_{m,n}$ must have at least one through string. Aiming for a simpler computation of the MSO Cauchy transform of MSO semicircular elements, we divide the annular pairings into two classes.

\begin{definition}
Let $\pi\in\AP_{m,n}^{(k)}$. Let $1\leq t'_1<t'_2<\cdots<t'_k\leq m$ and $1\leq t''_1,\ldots,t''_k\leq n$ be such that $\{t'_1,m+t''_1\},\ldots,\{t'_k,m+t''_k\}$ are the through strings of $\pi$. We say that $\pi$ is of Type I, denoted by $\pi\in\calT_\textnormal{I}$, if $t''_1 \geq t''_k$. If $t''_1<t''_k$, we say that $\pi$ is of Type II, and denote it by $\pi\in\calT_\textnormal{II}$.
\end{definition}

As it is customary, \cite{MingoNica2004,MingoSpeicher2006}, we represent the elements of $\AP_{m,n}$ as non-crossing pairings of points located along two concentric circles, see Figure~\ref{Fig:ExAnnularPairing}. In this work we make special emphasis on the starting and ending points of the circles. This distinction, which is immaterial in the scalar-valued setting, is crucial in the matrix-valued computations of the following section. Graphically, Type I annular pairings have their exterior and interior circles aligned with respect to where they start and end, while Type II annular pairings are somehow shifted, see Figure~\ref{Fig:AnnularPairingTypes}.

\begin{figure}[ht]
\centering
\includegraphics[scale=0.3]{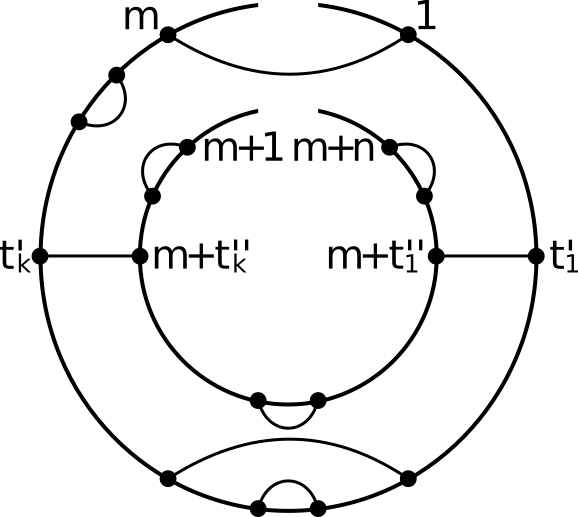} \quad \quad \quad \quad \quad \quad \quad \quad \includegraphics[scale=0.3]{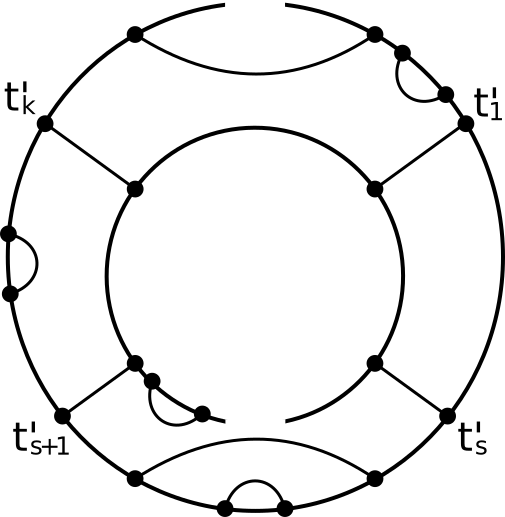}
\caption{Example of a Type I and a Type II annular pairing.}
\label{Fig:AnnularPairingTypes}
\end{figure}

\subsubsection{Type I annular pairings}

Type I annular pairings can be decomposed into three double-line pairings as follows.

\begin{definition}
Let $\pi\in\AP_{m,n}^{(k)}\cap\calT_\textnormal{I}$ and $\{t'_1,m+t''_1\},\ldots,\{t'_k,m+t''_k\}$ be its through strings with $t'_1<\cdots<t'_k$ (so $t''_1>\cdots>t''_k$). We define
\begin{itemize}
	\item[a)] the exterior boundary of $\pi$, $\pi_{eb} := \pi|_{[1,t'_1)\cup(t'_k,m]}$;
	\item[b)] the interior boundary of $\pi$, $\pi_{ib}:= \pi|_{[m+1,m+t''_k)\cup(m+t''_1,m+n]}$;
	\item[c)] the regular part of $\pi$, $\pi_{re}:= \pi|_{[t'_1,t'_k]\cup[m+t''_k,m+t''_1]}$.
\end{itemize}
\end{definition}

In Figure~\ref{Fig:TypeIPairing} there is a graphical representation of the above decomposition for the Type I annular pairing in Figure~\ref{Fig:AnnularPairingTypes}. Observe that we can identify $\pi_{eb}$ with an element in $\DP_{t'_1-1,m-t'_k}$. Specifically, we can identify $\pi_{eb}$ with
\eqn{eq:ebImplementation}{\{\{h(u),h(v)\} : \{u,v\}\in\pi_{eb}\}\in\DP_{t'_1-1,m-t'_k},}
where $h:\N\to\N$ is given by $h(u) = u \I{u<t'_1} + (u-t'_k+t'_1-1)\I{u \geq t'_1}$. Similarly, we can identify $\pi_{ib}$ with an element in $\DP_{t''_k-1,n-t''_1}$. Also, we can identify $\pi_{re}$ with the element of $\DP^{||}$ given by
\eqn{eq:reImplementation}{\{ \{h(u),h(v)\} : \{u,v\}\in\pi_{re}\}\in\DP_{t'_k-t'_1+1,t''_1-t''_k+1},}
where $h:\N\to\N$ is given by $h(u)=(u-t'_1+1)\I{u\leq m} + (u-m-t''_k+1+t'_k-t'_1+1)\I{u>m}$. For ease of notation, we denote by $\pi_{eb}$ both $\pi_{eb}$ and its associated element in $\DP$; similarly for $\pi_{ib}$ and $\pi_{re}$. The identifications implemented in \eqref{eq:ebImplementation} and \eqref{eq:reImplementation} readily imply the following.

\begin{proposition}
\label{Prop:TIDPSP}
The mapping
\eqn{eq:BijectionTIDP}{\calT_\textnormal{I}\ni\pi \mapsto (\pi_{eb},\pi_{ib},\pi_{re})\in\DP\times\DP\times\DP^{||}}
is a bijection between $\calT_{\textnormal{I}}$ and $\DP\times\DP\times\DP^{||}$.
\end{proposition}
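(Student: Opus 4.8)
The plan is to verify that the map in \eqref{eq:BijectionTIDP} is well-defined, injective, and surjective, by exhibiting an explicit inverse that reassembles a triple $(\rho_1,\rho_2,\rho_3)\in\DP\times\DP\times\DP^{||}$ into a Type I annular pairing. The key conceptual point, already made precise by the identifications \eqref{eq:ebImplementation} and \eqref{eq:reImplementation}, is that a Type I pairing $\pi\in\AP_{m,n}^{(k)}\cap\calT_{\textnormal I}$ is completely determined by three pieces of data: the cyclic positions of its through strings, and the three restrictions $\pi_{eb}$, $\pi_{ib}$, $\pi_{re}$. The $\DP^{||}$-condition on $\pi_{re}$ is exactly the statement that the two extreme through strings $\{t'_1,m+t''_1\}$ and $\{t'_k,m+t''_k\}$ survive in the regular part as the pairs $\{1,(t'_k-t'_1+1)+(t''_1-t''_k+1)\}$ and $\{t'_k-t'_1+1,\,t'_k-t'_1+2\}$, which is built into the definition of $\DP^{||}$.

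First I would check that the map is well-defined: given $\pi\in\calT_{\textnormal I}$ one must confirm that each of $\pi_{eb},\pi_{ib}$ is genuinely a double-line (i.e.\ planar) pairing of the indicated point counts, and that $\pi_{re}\in\DP^{||}$. Planarity of each restriction is inherited from planarity of $\pi$ on the annulus: a through string $\{t'_1,m+t''_1\}$ with $t'_1<\cdots<t'_k$ and $t''_1>\cdots>t''_k$ (the Type I ordering) separates the annulus into regions, and the points of $[1,t'_1)\cup(t'_k,m]$ together with $[m+1,m+t''_k)\cup(m+t''_1,m+n]$ lie in one such region, none of whose chords can be through strings — so $\pi_{eb}$ (resp.\ $\pi_{ib}$) restricts to a non-crossing pairing on a disk, i.e.\ an element of $\SP$, hence of $\DP$ after the relabeling $h$. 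The regular part contains all $k$ through strings and, by the extremality of $t'_1,t'_k,t''_1,t''_k$, its boundary chords are precisely $\{1,\cdot\}$ and $\{t'_k-t'_1+1,\cdot\}$ as required for membership in $\DP^{||}$ (which also forces $m,n\geq1$ in the relevant $\DP^{||}$ coordinates since $k\geq1$).

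Next I would construct the inverse. Given $(\rho_1,\rho_2,\rho_3)$ with $\rho_1\in\DP_{a_1,b_1}$, $\rho_2\in\DP_{a_2,b_2}$, $\rho_3\in\DP_{a_3,b_3}^{||}$, set $m:=a_1+a_2+a_3$\,(up to the bookkeeping offsets) and $n:=b_1+b_2+b_3$, place $\rho_3$'s exterior line as the arc $[t'_1,t'_k]$ and interior line as $[m+t''_k,m+t''_1]$, insert $\rho_1$ into the complementary exterior arc $[1,t'_1)\cup(t'_k,m]$ and $\rho_2$ into $[m+1,m+t''_k)\cup(m+t''_1,m+n]$, and take the union. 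One checks this produces a planar annular pairing — the three pieces occupy disjoint regions of the annulus, so no new crossings are created — with at least one through string (coming from $\rho_3\in\DP^{||}$), hence an element of $\AP_{m,n}$, and the Type I ordering $t''_1>\cdots>t''_k$ is forced by the planar embedding of $\rho_3$'s through strings. That this composite is two-sided inverse to $\pi\mapsto(\pi_{eb},\pi_{ib},\pi_{re})$ is then a direct unwinding of the relabelings $h$ in \eqref{eq:ebImplementation}–\eqref{eq:reImplementation}.

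The main obstacle is purely bookkeeping: keeping the three index shifts $h$ consistent so that reading off $(\pi_{eb},\pi_{ib},\pi_{re})$ and then reassembling returns exactly $\pi$ (and not a cyclically rotated or reflected copy), and confirming that the point counts add up correctly — in particular that the ``$-1$'' and ``$+1$'' offsets in \eqref{eq:ebImplementation} and \eqref{eq:reImplementation} are compatible, i.e.\ that the two boundary through strings are counted once, in $\pi_{re}$, rather than being double-counted or lost. Since the excerpt states the identifications \eqref{eq:ebImplementation} and \eqref{eq:reImplementation} already ``readily imply'' the proposition, the cleanest write-up is simply to record the inverse map explicitly and note that the verification of the inverse relations is the routine index chase indicated above; I would present it as such rather than grinding through every substitution.
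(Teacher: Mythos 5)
Your proof is correct and follows essentially the same route as the paper, which offers no written argument beyond asserting that the identifications \eqref{eq:ebImplementation} and \eqref{eq:reImplementation} readily imply the bijection: your well-definedness check (planarity of each restriction, membership of the regular part in $\DP^{||}$) together with the explicit reassembly inverse is exactly the routine verification the authors leave to the reader. The only slip is the point count in the inverse, which should read $m=a_1+b_1+a_3$ and $n=a_2+b_2+b_3$ since both lines of $\rho_1$ sit on the exterior circle and both lines of $\rho_2$ on the interior one, but you flagged this as bookkeeping and it does not affect the argument.
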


As in Proposition~\ref{Prop:BijectionSPDP}, the existence of a bijection between $\calT_{\textnormal{I}}$ and $\DP\times\DP\times\DP^{||}$ is obvious. The relevance of the previous proposition comes from the fact that the bijection in \eqref{eq:BijectionTIDP} allows us to express the cumulants of Type I annular pairings in terms of the cumulants of double-line pairings, as shown in Section~\ref{Subsection:CumulantsAnnularPairings}.

\begin{figure}[ht]
\centering
\includegraphics[scale=0.4]{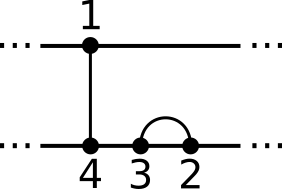} \quad \quad \quad \includegraphics[scale=0.4]{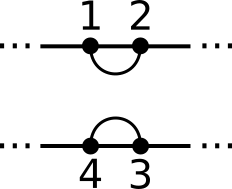} \quad \quad \quad \includegraphics[scale=0.4]{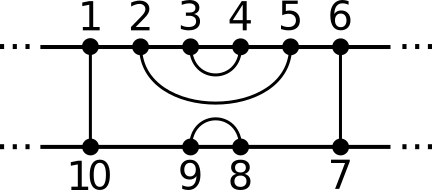}
\caption{Decomposition of a Type I annular pairing: $\pi_{eb}$, $\pi_{ib}$, and $\pi_{re}$.}
\label{Fig:TypeIPairing}
\end{figure}

\subsubsection{Type II annular pairings}

As we did with Type I annular pairings, we decompose Type II annular pairings into four double-line pairings and two single-line pairings. Even though this decomposition may seem complicated, it is very useful to compute the cumulants of Type II annular pairings, as shown in the following section.

\begin{definition}
Let $\pi\in\AP_{m,n}^{(k)}\cap\calT_\textnormal{II}$ and $\{t'_1,m+t''_1\},\ldots,\{t'_k,m+t''_k\}$ be its through strings with $t'_1<\cdots<t'_k$and $t''_{s+1}>\cdots>t''_k>t''_1>\cdots>t''_s$ for some $s\in[k]$. We define
\begin{itemize}
	\item[a)] the exterior boundary of $\pi$, $\pi_{eb} := \pi|_{[1,t'_1)\cup(t'_k,m]}$;
	\item[b)] the interior boundary of $\pi$, $\pi_{ib}:=\pi|_{[m+1,m+t''_s)\cup(m+t''_{s+1},m+n]}$;
	\item[c)] the right regular part of $\pi$, $\pi_{rr}:=\pi|_{[t'_1,t'_s]\cup[m+t''_s,m+t''_1]}$;
	\item[d)] the left regular part of $\pi$, $\pi_{lr}:=\pi|_{[t'_{s+1},t'_k]\cup[m+t''_k,m+t''_{s+1}]}$;
	\item[e)] the opposite part to the interior boundary of $\pi$, $\pi_{oi}:=\pi|_{(t'_s,t'_{s+1})}$;
	\item[f)] the opposite part to the exterior boundary of $\pi$, $\pi_{oe}:=\pi|_{(m+t''_1,m+t''_k)}$.
\end{itemize}
\end{definition}

In Figure~\ref{Fig:TypeIIPairing} there is a graphical representation of the above decomposition for the Type II annular pairing in Figure~\ref{Fig:AnnularPairingTypes}. As we did before, we can identify each of the six pieces of the previous decomposition with either a single-line or a double-line pairing. In this case, these identifications lead to the following.

\begin{proposition}
\label{Prop:TIIDPSP}
The mapping
\eq{\calT_\textnormal{II}\ni\pi \mapsto (\pi_{eb},\pi_{ib},\pi_{rr},\pi_{lr},\pi_{oi},\pi_{oe})\in\DP\times\DP\times\DP^{||}\times\DP^{||}\times\SP\times\SP}
is a bijection between $\calT_\textnormal{II}$ and $\DP\times\DP\times\DP^{||}\times\DP^{||}\times\SP\times\SP$.
\end{proposition}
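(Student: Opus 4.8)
\textbf{Proof proposal for Proposition~\ref{Prop:TIIDPSP}.} The plan is to mirror, piece by piece, the argument already used for Type~I annular pairings in Proposition~\ref{Prop:TIDPSP}, where the only new ingredient is that cutting a Type~II pairing along its through strings produces six arcs of the two circles rather than three. Given $\pi\in\AP_{m,n}^{(k)}\cap\calT_\textnormal{II}$ with through strings $\{t'_1,m+t''_1\},\ldots,\{t'_k,m+t''_k\}$ labelled so that $t'_1<\cdots<t'_k$, the definition of Type~II says exactly that there is an $s\in[k]$ with $t''_{s+1}>\cdots>t''_k>t''_1>\cdots>t''_s$; this $s$ is canonically determined by $\pi$, so the six restrictions $\pi_{eb},\pi_{ib},\pi_{rr},\pi_{lr},\pi_{oi},\pi_{oe}$ are well defined. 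First I would record that the six index sets on which these restrictions live partition $[m+n]$ minus the $2k$ endpoints of the through strings: the exterior circle $[1,m]$ is cut by $t'_1<\cdots<t'_k$ into $(t'_k,m]\cup[1,t'_1)$ (that is $\pi_{eb}$), $[t'_1,t'_s]$ (the exterior half of $\pi_{rr}$), $(t'_s,t'_{s+1})$ (that is $\pi_{oi}$), and $[t'_{s+1},t'_k]$ (the exterior half of $\pi_{lr}$); the interior circle $[m+1,m+n]$ is cut analogously by the $t''$'s into the interior halves of $\pi_{ib},\pi_{rr},\pi_{oe},\pi_{lr}$. Since $\pi$ is non-crossing on the annulus, no arc of $\pi$ can join two different pieces of this decomposition except a through string, so $\pi$ is the disjoint union of the six restrictions together with the $k$ through strings; this is the routine ``cut along the through strings'' observation and it is the heart of why the map is well defined.

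Next I would identify each piece with the claimed single- or double-line object by the same relabelling device used in \eqref{eq:ebImplementation} and \eqref{eq:reImplementation}. The pieces $\pi_{oi}$ and $\pi_{oe}$ each lie on a single circular arc with no through string, so after shifting their points down to start at $1$ they become ordinary non-crossing pairings, i.e.\ elements of $\SP$. The pieces $\pi_{eb}$ and $\pi_{ib}$ each occupy two arcs, one from each ``side'' of the relevant part of a circle, with no through string between them, and after relabelling they become elements of $\DP$ exactly as $\pi_{eb},\pi_{ib}$ did in the Type~I case. The pieces $\pi_{rr}$ and $\pi_{lr}$ each consist of an exterior arc $[t'_1,t'_s]$ (resp.\ $[t'_{s+1},t'_k]$) and an interior arc $[m+t''_s,m+t''_1]$ (resp.\ $[m+t''_k,m+t''_{s+1}]$) whose \emph{four} endpoints are endpoints of through strings; relabelling so the exterior arc is $[1,\ell']$ and the interior arc is $[\ell'+1,\ell'+\ell'']$, the two outermost through strings force the pairs $\{1,\ell'+\ell''\}$ and $\{\ell',\ell'+1\}$ to belong to the relabelled pairing, which is precisely membership in $\DP^{||}$. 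Checking that the relabelled pairings are genuinely non-crossing in the linear (single-line) or double-line sense is a direct consequence of non-crossingness of $\pi$ on the annulus, and that the relabelled $\pi_{rr},\pi_{lr}$ have their ``top'' and ``bottom'' pairs as required is immediate from how the through strings nest.

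To finish, I would exhibit the inverse map, which is the real content of the bijection claim: given $(\rho_{eb},\rho_{ib},\rho_{rr},\rho_{lr},\rho_{oi},\rho_{oe})\in\DP\times\DP\times\DP^{||}\times\DP^{||}\times\SP\times\SP$, read off the sizes $m_j,n_j$ of the two lines of each $\DP$-factor and the sizes of the $\SP$-factors, use them to lay out $m$ exterior and $n$ interior points with $k$ marked through-string positions (where $k-1$ plus $2$ is the combined number of through strings contributed by $\rho_{rr}$ and $\rho_{lr}$ in the sense of the $\DP^{||}\cong\DP$ bijection \eqref{eq:BijectionDPDP}; more precisely $k$ is determined so that $\rho_{rr}$ accounts for $s$ of them and $\rho_{lr}$ for $k-s$), place the six pieces on the corresponding arcs by inverting the relabellings $h$ above, and add the $k$ through strings in the unique Type~II nesting pattern. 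One checks this produces a pairing in $\AP_{m,n}^{(k)}\cap\calT_\textnormal{II}$ and that the two constructions are mutually inverse by unwinding the definitions. I expect the main obstacle to be purely bookkeeping: verifying that the sizes of the six factors determine $m$, $n$, $k$, and the split index $s$ unambiguously, and that no two distinct Type~II pairings give the same six-tuple — in other words, pinning down the inverse precisely, exactly as Proposition~\ref{Prop:BijectionSPDP} and the $\DP\cong\DP^{||}\setminus\DP_{1,1}$ bijection in \eqref{eq:BijectionDPDP} did for the simpler cases. Since the statement asserts the proof ``is a routine verification'' in spirit, I would present the arc decomposition and the explicit relabellings, note that the inverse is built by reversing them, and leave the index arithmetic to the reader.
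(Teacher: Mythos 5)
Your proposal is correct and follows exactly the route the paper intends: the paper gives no explicit proof of Proposition~\ref{Prop:TIIDPSP}, asserting only that the six restrictions can be identified with single- and double-line pairings ``as before,'' i.e.\ via relabelling maps analogous to \eqref{eq:ebImplementation} and \eqref{eq:reImplementation}, and your write-up simply makes that routine verification explicit (the cut-along-through-strings observation, the relabellings, the $\DP^{||}$ membership of $\pi_{rr},\pi_{lr}$ forced by the outermost through strings, and the reconstruction of $m$, $n$, $k$, $s$ from the sizes of the six factors). The only blemish is the garbled parenthetical about ``$k-1$ plus $2$'' through strings, which you immediately correct to the right count ($s$ from $\rho_{rr}$ and $k-s$ from $\rho_{lr}$), so it does not affect the argument.
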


\begin{figure}[ht]
\centering
\includegraphics[scale=0.4]{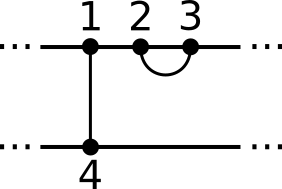} \quad \quad \includegraphics[scale=0.4]{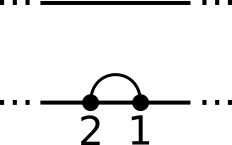} \quad \quad \includegraphics[scale=0.4]{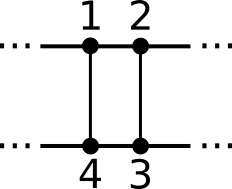} \quad \quad \includegraphics[scale=0.4]{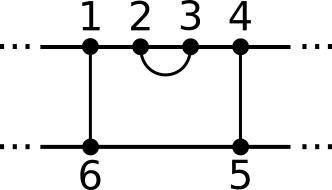}

\vspace{5pt}

{\bf -----------------------------------------------------------------------------------}

\vspace{5pt}

\includegraphics[scale=0.4]{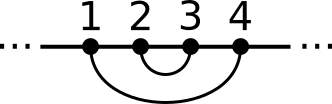} \quad \quad \quad \includegraphics[scale=0.4]{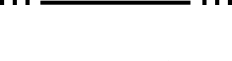}
\caption{Decomposition of a Type II annular pairing: $\pi_{eb}$, $\pi_{ib}$, $\pi_{rr}$, $\pi_{lr}$, $\pi_{oi}$, and $\pi_{oe}$.}
\label{Fig:TypeIIPairing}
\end{figure}

\section{Cauchy transforms of MSO semicircular elements}
\label{Section:CauchyTransforms}

In this section we define and compute the MSO Cauchy transforms associated to single-line, double-line, and annular pairings.

\subsection{Single-line pairings}

Recall that the single-line pairings are the usual non-crossing pairings, i.e., $\SP_n = \NC_2(n)$ for $n\geq1$ and $\SP_0=\{\emptyset\}$. We adopt the operator-valued free cumulants as the cumulants associated to single-line pairings. Specifically, if $X\in\Mat{d}{\calA}$ is a MSO semicircular element with covariance $\sigma$, the cumulant $\kappa_\pi:\MAT_d^{n+1}\to\MAT_d$ associated to $\pi\in\SP_n$ is given by
\eq{\kappa_\pi(\repdc{a}{0}{n})(p,q) = \SUM{\repdc{i}{1}{n}}\SUM{\repdc{j}{1}{n}} \left[\prod_{w=0}^n a_w(j_w,i_{w+1})\right] \prod_{\underset{\pi}{u \sim v}} \sigma(i_u,j_u;i_v,j_v),}
where $j_0:= p$ and $i_{n+1}:= q$. Recall that \cite{Speicher1998}, for all $(\repdc{a}{0}{n})\in\MAT_d^{n+1}$,
\eqn{eq:OpVMomentCumulantFormula}{\Eop{a_0X \cdots Xa_n} = \sum_{\pi\in\SP_n} \kappa_\pi(\repdc{a}{0}{n}).}
In particular, the cumulants associated to single-line pairings satisfy a moment-cumulant formula.

Single-line cumulants obey a recursive computation rule that depends on the nested structure of the underlying pairings \cite{Speicher1998}. Even though we won't explicitly rely on this property, for completeness consider the following.

\begin{example}
Let $\pi\in\SP_{12}$ be the non-crossing pairing given in Figure~\ref{Fig:SingleLinePairing}. In this case, for ${\bf a}\in\MAT_d^{13}$,
\eq{\kappa_\pi({\bf a}) = a_0 \eta(a_1) a_2 \eta(a_3 \eta(a_4) a_5 \eta(a_6 \eta(a_7) a_8) a_9 \eta(a_{10}) a_{11})a_{12},}
where $\eta:\MAT_d\to\MAT_d$ is given by \ndsty{\eta(a)(p,q) = \sum_{k,l} a(k,l) \sigma(p,k;l,q)}.
\end{example}

The following lemma will be useful later in this section.

\begin{lemma}
\label{Lemma:ReversedSPCumulant}
Let $X\in\Mat{d}{\calA}$ be a MSO semicircular element with covariance $\sigma$. If $\pi\in\SP_n$, then
\eq{\kappa_\pi(\repdc{a^\T}{n}{0})^\T = \kappa_{\hat{\pi}}(\repdc{a}{0}{n})}
for all $(\repdc{a}{0}{n})\in\MAT_d^{n+1}$ where $\hat{\pi} = \{\{n+1-u,n+1-v\} : \{u,v\}\in\pi\}$.
\end{lemma}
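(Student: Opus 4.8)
The plan is to compute both sides of the identity directly from the definition of the single-line cumulant
\[
\kappa_\pi(a_0,\ldots,a_n)(p,q) = \SUM{i_1,\ldots,i_n}\SUM{j_1,\ldots,j_n} \left[\prod_{w=0}^n a_w(j_w,i_{w+1})\right] \prod_{\underset{\pi}{u \sim v}} \sigma(i_u,j_u;i_v,j_v),
\]
with the conventions $j_0 = p$, $i_{n+1} = q$, and to verify they agree entry by entry. First I would write out $\kappa_\pi(a_n^\T,\ldots,a_0^\T)(q,p)$: its value is $\SUM{i,j} \left[\prod_{w=0}^n a_{n-w}^\T(j_w,i_{w+1})\right] \prod_{u\sim_\pi v} \sigma(i_u,j_u;i_v,j_v)$ with $j_0 = q$, $i_{n+1} = p$. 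Transposing a matrix swaps its arguments, so $a_{n-w}^\T(j_w,i_{w+1}) = a_{n-w}(i_{w+1},j_w)$. The transpose on the left-hand side of the lemma means I actually want this quantity evaluated with the roles of $p$ and $q$ as indicated, i.e. $\kappa_\pi(a_n^\T,\ldots,a_0^\T)^\T(p,q) = \kappa_\pi(a_n^\T,\ldots,a_0^\T)(q,p)$.

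The key step is the reindexing. I would introduce the substitution $w \mapsto n-w$ for the matrix factors and, simultaneously, the index relabeling $\tilde i_s := j_{n+1-s}$ and $\tilde j_s := i_{n+1-s}$ for $s = 1,\ldots,n$, together with the boundary identifications $\tilde j_0 := i_{n+1} = p$ and $\tilde i_{n+1} := j_0 = q$. Under this relabeling the product of matrix entries $\prod_{w=0}^n a_{n-w}(i_{w+1},j_w)$ becomes $\prod_{w=0}^n a_w(\tilde j_w, \tilde i_{w+1})$: one checks that the factor carrying $a_w$ picks up the pair $(i_{n-w+1}, j_{n-w}) = (\tilde j_w, \tilde i_{w+1})$, and the boundary cases $w=0$ and $w=n$ match the stated conventions. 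For the covariance factors, a pair $\{u,v\}\in\pi$ contributes $\sigma(i_u,j_u;i_v,j_v)$; relabeling via $u' = n+1-u$, $v' = n+1-v$ turns this into $\sigma(\tilde j_{u'}, \tilde i_{u'}; \tilde j_{v'}, \tilde i_{v'})$, and since $\sigma$ is symmetric in each pair of indices by \eqref{eq:saCovarianceMapping}, this equals $\sigma(\tilde i_{u'},\tilde j_{u'};\tilde i_{v'},\tilde j_{v'})$. Moreover $\{u,v\}\in\pi$ if and only if $\{u',v'\} = \{n+1-u,n+1-v\}\in\hat\pi$, so the product over $u\sim_\pi v$ becomes exactly the product over $u'\sim_{\hat\pi} v'$. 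Summing over all $\tilde i,\tilde j$ (a bijective reindexing of the sum over $i,j$) yields precisely $\kappa_{\hat\pi}(a_0,\ldots,a_n)(p,q)$.

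I expect the main obstacle to be purely bookkeeping: keeping the boundary conventions ($j_0$, $i_{n+1}$ versus $\tilde j_0$, $\tilde i_{n+1}$) straight through the reversal, and making sure the symmetry of $\sigma$ in \eqref{eq:saCovarianceMapping} is invoked correctly — note that we need $\sigma(a,b;c,d) = \sigma(b,a;d,c)$, which is the composition of the two individual swaps guaranteed there. There is no analytic or combinatorial difficulty; once the substitution is set up carefully the identity falls out. I would present the computation as a short chain of equalities, flagging the use of $X = X^\T$ only implicitly (it is not actually needed here — only the symmetry of $\sigma$ — though it is consistent with Part i) of Definition~\ref{Def:MatricialSOSemicircularElement}), and flagging the use of \eqref{eq:saCovarianceMapping} explicitly at the step where the covariance factors are rewritten.
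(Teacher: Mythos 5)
Your proposal is correct and follows essentially the same route as the paper's proof: expand $\kappa_\pi(a_n^\T,\ldots,a_0^\T)(q,p)$ from the definition, perform the index reversal $i_s \leftrightarrow j_{n+1-s}$ with the stated boundary conventions, and invoke the symmetry \eqref{eq:saCovarianceMapping} together with the identification of reversed pairs with $\hat\pi$. The bookkeeping you describe matches the paper's change of variables exactly, so no further comment is needed.
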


\begin{proof}
Let $p,q\in[d]$. By definition of $\kappa_\pi$, we have that
\al{
\kappa_\pi(\repdc{a^\T}{n}{0})(q,p) &= \SUM{\repdc{i}{1}{n}}\SUM{\repdc{j}{1}{n}} \left[\prod_{w=0}^n a_{n \minus w}^\T(j_w,i_{w+1})\right] \prod_{\underset{\pi}{u \sim v}} \sigma(i_u,j_u;i_v,j_v)\\
&= \SUM{\repdc{i}{1}{n}}\SUM{\repdc{j}{1}{n}} \left[\prod_{w=0}^n a_{n \minus w}(i_{w+1},j_w)\right] \prod_{\underset{\pi}{u \sim v}} \sigma(i_u,j_u;i_v,j_v),
}
with $j_0:= q$ and $i_{n+1}:= p$. Making the change of variables $i_w \to j'_{n+1-w}$ and $j_w\to i'_{n+1-w}$, so $j'_0=p$ and $i'_{n+1}=q$, we obtain that
\al{
\kappa_\pi(\repdc{a^\T}{n}{0})(q,p) &= \SUM{\repdc{i'}{1}{n}}\SUM{\repdc{j'}{1}{n}} \left[\prod_{w=0}^n a_{n \minus w}(j'_{n \minus w},i'_{n+1\minus w})\right] \prod_{\underset{\pi}{u \sim v}} \sigma(j'_{n+1 \minus u},i'_{n+1 \minus u};j'_{n+1 \minus v},i'_{n+1 \minus v})\\
&= \SUM{\repdc{i'}{1}{n}}\SUM{\repdc{j'}{1}{n}} \left[\prod_{w=0}^n a_w(j'_w,i'_{w+1})\right] \prod_{\underset{\pi}{u \sim v}} \sigma(j'_{n+1 \minus u},i'_{n+1 \minus u};j'_{n+1 \minus v},i'_{n+1 \minus v}).
}
The change of variables $n+1-u \to u$ and $n+1-v \to v$ leads to
\eq{\prod_{\underset{\pi}{u \sim v}} \sigma(j'_{n+1 \minus u},i'_{n+1 \minus u};j'_{n+1 \minus v},i'_{n+1 \minus v}) = \prod_{\underset{\pi}{n + 1 \minus u \sim n + 1 \minus v}} \sigma(j'_u,i'_u;j'_v,i'_v).}
By definition of $\hat{\pi}$, we have that $\{n+1-u,n+1-v\}\in\pi$ if and only if $\{u,v\}\in\hat{\pi}$. Therefore,
\eq{\kappa_\pi(\repdc{a^\T}{n}{0})(q,p) = \SUM{\repdc{i'}{1}{n}}\SUM{\repdc{j'}{1}{n}} \left[\prod_{w=0}^n a_w(j'_w,i'_{w+1})\right] \prod_{\underset{\hat{\pi}}{u \sim v}} \sigma(j'_{u},i'_{u};j'_{v},i'_{v}).}
By definition of a MSO semicircular element, the covariance mapping $\sigma$ satisfies \eqref{eq:saCovarianceMapping}. In particular,
\eq{\kappa_\pi(\repdc{a^\T}{n}{0})(q,p) = \SUM{\repdc{i'}{1}{n}}\SUM{\repdc{j'}{1}{n}} \left[\prod_{w=0}^n a_w(j'_w,i'_{w+1})\right] \prod_{\underset{\hat{\pi}}{u \sim v}} \sigma(i'_{u},j'_{u};i'_{v},j'_{v}) = \kappa_{\hat{\pi}}(\repdc{a}{0}{n})(p,q).}
Since this equation holds for every $p,q\in[d]$, we conclude that $\kappa_\pi(\repdc{a^\T}{n}{0})^\T = \kappa_{\hat{\pi}}(\repdc{a}{0}{n})$.
\end{proof}

By abuse of notation, we let $\kappa_\pi(a):=\kappa_\pi(a,\ldots,a)$.

\begin{definition}
\label{Def:CauchyTransformSP}
Let $X\in\Mat{d}{\calA}$ be a MSO semicircular element with covariance $\sigma$. We define the single-line Cauchy transform $G_S:\MAT_d\to\MAT_d$ of $X$ by
\eq{G_S(a) = \sum_{\pi\in\SP} \kappa_\pi(a^{-1}).}
\end{definition}

The purpose of the subindex $S$ in $G_S$ is to distinguish this Cauchy transform, that depends only on the single-line pairings, from the Cauchy transforms in the following sections. Note that $G_S$ is a particular case of the operator-valued Cauchy transform in \cite{Speicher1998}. It is important to remark that $G_S$ is a formal expression version of the analytic mapping $\calG$ considered in the introduction. Since $\calG$ can be easily computed using the fixed point equation \eqref{eq:FixedPointEquationcalG}, we use the single-line Cauchy transform as the basic building block for the upcoming Cauchy transforms.

\subsection{Double-line pairings}
\label{Subsection:MCCTDLP}

In this section we define the cumulants associated to the double-line pairings and relate them to the cumulants of the single-line pairings. For a covariance mapping $\sigma:[d]^2\times [d]^2\to\R$, we let $\Sigma$ be the $d^2\times d^2$ matrix determined by $\Sigma(p,q;r,s)=\sigma(p,q;r,s)$. Given $i':[m]\to[d]$ and $i'':[n]\to[d]$, we let $i:[m+n]\to[d]$ be the function $i_p = i'_p \I{p\leq m}+i''_{p-m}\I{p>m}$. Recall that \ndsty{\sum\limits_{i,j}} denotes the sum \ndsty{\sum\limits_{\repdc{i'}{1}{m}}\sum\limits_{\repdc{j'}{1}{m}}\sum\limits_{\repdc{i''}{1}{n}}\sum\limits_{\repdc{j''}{1}{n}}} where the indices run from $1$ to $d$.

\begin{definition}
\label{Def:CumulantDP}
Let $X\in\Mat{d}{\calA}$ be a MSO semicircular element with covariance $\sigma$. We define the cumulant $\kappa_\pi:\MAT_d^{m+1}\times\MAT_d^{n+1}\to\MAT_{d^2}$ associated to $\pi\in\DP_{m,n}$ by
\eq{\kappa_\pi({\bf a};{\bf b})(p,q;r,s) = \SUM{i,j} \left[\prod_{w=0}^m a_w(j'_w,i'_{w+1})\right] \left[\prod_{w=0}^n b_{n \minus w}^\T (j''_w,i''_{w+1})\right] \prod_{\underset{\pi}{u \sim v}} \sigma(i_u,j_u;i_v,j_v),}
with $j'_0:= p$, $i'_{m+1}:= q$, $j''_0:= s$, and $i''_{n+1}:= r$.
\end{definition}

Despite the fact that these functions are called cumulants, in principle, they do not satisfy a moment-cumulant formula like the one in \eqref{eq:OpVMomentCumulantFormula}. Nonetheless, these cumulants play an important role in the computation of the MSO Cauchy transform of MSO semicircular elements. The following lemma will be useful later in this section.

\begin{lemma}
\label{Lemma:ReversedDPCumulant}
Let $X\in\Mat{d}{\calA}$ be a MSO semicircular element with covariance $\sigma$. If $\pi\in\DP_{m,n}$, then
\eq{\kappa_\pi(\repdc{a}{0}{m};\repdc{b}{0}{n})(p,q;r,s) = \kappa_{\hat{\pi}}(\repdc{b}{0}{n};\repdc{a}{0}{m})(r,s;p,q)}
where $\hat{\pi}=\{\{m+n+1-u,m+n+1-v\} : \{u,v\}\in\pi\}\in\DP_{n,m}$.
\end{lemma}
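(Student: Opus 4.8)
The plan is to expand both sides using Definition~\ref{Def:CumulantDP} and match them term-by-term via a reversal change of variables, mirroring closely the argument in the proof of Lemma~\ref{Lemma:ReversedSPCumulant}. First I would write out the left-hand side: $\kappa_\pi(\repdc{a}{0}{m};\repdc{b}{0}{n})(p,q;r,s)$ equals a sum over $i',j':[m]\to[d]$ and $i'',j'':[n]\to[d]$ of the product $\left[\prod_{w=0}^m a_w(j'_w,i'_{w+1})\right]\left[\prod_{w=0}^n b_{n-w}^\T(j''_w,i''_{w+1})\right]\prod_{u\sim_\pi v}\sigma(i_u,j_u;i_v,j_v)$, with boundary conventions $j'_0=p$, $i'_{m+1}=q$, $j''_0=s$, $i''_{n+1}=r$. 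Separately I would write out the right-hand side $\kappa_{\hat\pi}(\repdc{b}{0}{n};\repdc{a}{0}{m})(r,s;p,q)$ using the same definition, where now $\hat\pi$ is a pairing of $[n+m]$, the ``$a$'' arguments play the role of the second group of $n\mapsto m$ indeterminates (transposed), and the ``$b$'' arguments the first; the boundary conventions become $j'_0=r$, $i'_{m+1}=s$ (for the first block, of length $n$) and the transposed block has boundary indices involving $p,q$.

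The key step is a reversal substitution on the summation indices of the right-hand side. On the full index sets for $\hat\pi$ (which lives on $m+n$ points with the first $n$ points corresponding to the $b$-block and the last $m$ points to the $a$-block), I would substitute each index at position $w$ by a new index at position $m+n+1-w$, simultaneously swapping the roles of the $i$- and $j$-families exactly as in Lemma~\ref{Lemma:ReversedSPCumulant}. Under this relabelling, a block $\{u,v\}\in\hat\pi$ corresponds to $\{m+n+1-u,m+n+1-v\}\in\pi$ by the very definition of $\hat\pi$, so the product of $\sigma$-factors over $\hat\pi$ turns into the product over $\pi$; the transpose $b^\T$ versus $b$ and $a$ versus $a^\T$ bookkeeping will convert each entry $b_{n-w}^\T(\cdot,\cdot)$ into $b_{n-w}(\cdot,\cdot)$ and line up the index order so that the product of $a$- and $b$-entries matches the left-hand side exactly. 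Finally, I would invoke the symmetry \eqref{eq:saCovarianceMapping} of $\sigma$ (available since $X$ is an MSO semicircular element) to pass between $\sigma(i_u,j_u;i_v,j_v)$ and whatever permuted arrangement of the arguments the substitution produces, just as at the end of the proof of Lemma~\ref{Lemma:ReversedSPCumulant}.

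I expect the main obstacle to be purely bookkeeping: keeping track of which of the $m$- and $n$-blocks is which after the swap $\pi\mapsto\hat\pi$, and making sure the boundary identifications ($j'_0,i'_{m+1},j''_0,i''_{n+1}$ on each side) are correctly permuted so that $(p,q;r,s)$ on the left matches $(r,s;p,q)$ on the right. In particular one must be careful that in $\kappa_\pi$ the exterior $a$-indices carry $(p,q)$ and the exterior $b$-indices carry $(r,s)$ (with the transpose already built into the $b$-factors), whereas in $\kappa_{\hat\pi}$ on the right these roles are exchanged; the reversal substitution should exactly implement this exchange. There is no analytic or combinatorial difficulty beyond this — the statement is an identity of finite sums of products of scalars — so the proof is a careful but routine verification analogous to Lemma~\ref{Lemma:ReversedSPCumulant}.
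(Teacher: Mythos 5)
Your proposal is correct and follows essentially the same route as the paper's proof: expand both sides from Definition~\ref{Def:CumulantDP}, apply the reversal substitution $w\mapsto m+n+1-w$ while swapping the roles of the $i$- and $j$-families, use the symmetry \eqref{eq:saCovarianceMapping} of $\sigma$, and match blocks of $\pi$ with blocks of $\hat\pi$. The only cosmetic difference is that the paper runs the substitution starting from the left-hand side rather than the right, which is the same computation in reverse.
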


\begin{proof}
By definition, we have that
\eq{\kappa_\pi({\bf a};{\bf b})(p,q;r,s) = \SUM{i,j}  \left[\prod_{w=0}^m a_w(j'_w,i'_{w+1})\right] \left[\prod_{w=0}^n b_{n \minus w}^\T(j''_w,i''_{w+1})\right] \prod_{\underset{\pi}{u \sim v}} \sigma(i_u,j_u;i_v,j_v),}
with $j'_0:= p$, $i'_{m+1}:= q$, $j''_0:= s$, and $i''_{n+1}:= r$. Recall that, by assumption, the covariance mapping $\sigma$ satisfies \eqref{eq:saCovarianceMapping}. A straightforward computation shows that
\eq{\kappa_\pi({\bf a};{\bf b})(p,q;r,s) = \SUM{i,j} \left[\prod_{w=0}^n b_{n \minus w}(i''_{w+1},j''_w)\right] \left[\prod_{w=0}^m a_w^\T(i'_{w+1},j'_w)\right] \prod_{\underset{\pi}{u \sim v}} \sigma(j_u,i_u;j_v,i_v).}
Consider the change of variables
\eq{i''_w \to \hat{j}'_{n+1-w}\textnormal{ and } j''_w \to \hat{i}'_{n+1-w}\textnormal{ for all } w\in[n],}
\eq{i'_w \to \hat{j}''_{m+1-w}\textnormal{ and } j'_w \to \hat{i}''_{m+1-w}\textnormal{ for all } w\in[m],}
with $\hat{j}'_0:= r$, $\hat{i}'_{n+1}:= s$, $\hat{j}''_0:= q$, and $\hat{i}''_{m+1}:= p$. Observe that
\eq{i_p = i'_p \I{p\leq m} + i''_{p-m} \I{p>m} = \hat{j}''_{m+1-p} \I{n+1\leq m+n+1-p} + \hat{j}'_{m+n+1-p} \I{n+1 > m+n+1-p} = \hat{j}_{m+n+1-p}.}
Similarly, we have that $j_p=\hat{i}_{m+n+1-p}$. Thus,
\al{
\kappa_\pi({\bf a};{\bf b})(p,q;r,s) &= \SUM{\hat{i},\hat{j}} \left[\prod_{w=0}^n b_{n \minus w}(\hat{j}'_{n \minus w},\hat{i}'_{n \minus w+1})\right] \left[\prod_{w=0}^m a_w^\T(\hat{j}''_{m \minus w},\hat{i}''_{m \minus w+1})\right] \times\\
& \quad \quad \quad \quad \quad \times \prod_{\underset{\pi}{u \sim v}} \sigma(\hat{i}_{m+n+1 \minus u},\hat{j}_{m+n+1 \minus u};\hat{i}_{m+n+1 \minus v},\hat{j}_{m+n+1 \minus v}).}
Since $\{u,v\}\in\pi$ if and only if $\{m+n+1-u,m+n+1-v\}\in\hat{\pi}$, the change of variables $m+n+1-u \to u$ and $m+n+1-v \to v$ leads to
\eq{\kappa_\pi({\bf a};{\bf b})(p,q;r,s) = \SUM{\hat{i},\hat{j}} \left[\prod_{w=0}^n b_w(\hat{j}'_w,\hat{i}'_{w+1})\right] \left[\prod_{w=0}^m a_{m \minus w}^\T(\hat{j}''_w,\hat{i}''_{w+1})\right] \prod_{\underset{\hat{\pi}}{u \sim v}} \sigma(\hat{i}_{u},\hat{j}_{u};\hat{i}_{v},\hat{j}_{v}).}
Observe that the right hand side of the previous equation equals $\kappa_{\hat{\pi}}(\repdc{b}{0}{n};\repdc{a}{0}{m})(r,s;p,q)$.
\end{proof}

In the following lemma we express the cumulant associated to a double-line pairing in terms of the cumulants of single-line pairings. Let $\Psi^{(k)}:\SP^{k+1}\times\SP^{k+1}\to\DP^{(k)}$ be the bijection introduced in Proposition~\ref{Prop:BijectionSPDP}.

\begin{lemma}
\label{Lemma:CumulantDPSP}
Let $X\in\Mat{d}{\calA}$ be a MSO semicircular element with covariance $\sigma$. If $\pi\in\DP_{m,n}^{(k)}$ has through strings $\{t'_1,m+t''_1\},\ldots,\{t'_k,m+t''_k\}$ with $t'_1<\cdots<t'_k$ (so $t''_1>\cdots>t''_k$), then
\eq{\kappa_\pi({\bf a};{\bf b}) = \left[\kappa_{\pi'_0}(\repdc{a}{0}{t'_1-1}) \otimes \kappa_{\widehat{\pi''_0}}(\repdc{b}{0}{n-t''_1})\right] \Sigma \cdots \Sigma \left[\kappa_{\pi'_k}(\repdc{a}{t'_k}{m}) \otimes \kappa_{\widehat{\pi''_k}}(\repdc{b}{n-t''_k+1}{n})\right],}
where $\pi=\Psi^{(k)}(\repdc{\pi'}{0}{k};\repdc{\pi''}{k}{0})$.
\end{lemma}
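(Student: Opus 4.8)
The plan is to unwind both sides of the claimed identity into sums over index functions $i,j$ and to match them term by term, using the explicit description of the bijection $\Psi^{(k)}$ from Proposition~\ref{Prop:BijectionSPDP} to keep track of how the indices of $\pi$ split among the constituent single-line pairings $\pi'_0,\ldots,\pi'_k,\pi''_0,\ldots,\pi''_k$. First I would write out $\kappa_\pi(\mathbf{a};\mathbf{b})(p,q;r,s)$ from Definition~\ref{Def:CumulantDP} and use the structure of $\pi=\Psi^{(k)}(\pi'_0,\ldots,\pi'_k;\pi''_k,\ldots,\pi''_0)$: each block of $\pi$ is either (a) a block of one of the shifted copies $\pi'_i+(\cdots)$, lying entirely in the ``$a$-part'' of the line, (b) a block of one of the shifted copies $\pi''_{k-i}+(\cdots)$, lying entirely in the ``$b$-part'', or (c) one of the $k$ through strings $\{t'_\ell, m+t''_\ell\}$. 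Consequently the product $\prod_{u\sim_\pi v}\sigma(i_u,j_u;i_v,j_v)$ factors as a product of three kinds of $\sigma$-factors, and the products of $a$-entries and $b$-entries likewise break along the cut points $t'_1,\ldots,t'_k$ (resp.\ $n-t''_1,\ldots$). This is exactly the block structure that $\Psi^{(k)}$ was designed to expose, so the bookkeeping, while tedious, is forced.

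Second I would identify the through-string $\sigma$-factors with the matrix $\Sigma$. By definition $\Sigma(p,q;r,s)=\sigma(p,q;r,s)$, and the through string $\{t'_\ell,m+t''_\ell\}$ contributes a factor $\sigma(i_{t'_\ell},j_{t'_\ell};i''_{t''_\ell},j''_{t''_\ell})$; the boundary indices at the endpoints of the $\ell$-th ``$a$-segment'' and $\ell$-th ``$b$-segment'' are precisely the free indices of the cumulants $\kappa_{\pi'_\ell}$ and $\kappa_{\widehat{\pi''_\ell}}$, so inserting $\Sigma$ between consecutive tensor factors and summing over the shared $d^2$-many indices reproduces matrix multiplication in $\MAT_{d^2}$ via \eqref{eq:SumTensorProduct}. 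Here the transpose built into Definition~\ref{Def:CumulantDP} (the $b_{n-w}^\T$) together with Lemma~\ref{Lemma:ReversedSPCumulant} is what converts the naturally-arising $\kappa_{\pi''_\ell}(\ldots b^\T\ldots)^\T$ into $\kappa_{\widehat{\pi''_\ell}}(\ldots b\ldots)$; I would invoke Lemma~\ref{Lemma:ReversedSPCumulant} at exactly this point to rewrite each $b$-segment cumulant in the stated un-transposed, reversed-pairing form. The tensor-product structure $\kappa_{\pi'_\ell}(\ldots)\otimes\kappa_{\widehat{\pi''_\ell}}(\ldots)$ then falls out of \eqref{eq:TensorProductpqrs} because the $a$-indices and $b$-indices are disjoint and the $\sigma$-factors of type (a) involve only $a$-segment indices while those of type (b) involve only $b$-segment indices.

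Third I would check the two ``ends'' of the product carefully: the leftmost factor $\kappa_{\pi'_0}(a_0,\ldots,a_{t'_1-1})\otimes\kappa_{\widehat{\pi''_0}}(b_0,\ldots,b_{n-t''_1})$ must carry the external indices $p,r$ in the correct slots, and the rightmost factor the external indices $q,s$; this is a matter of matching $j'_0=p$, $i'_{m+1}=q$, $j''_0=s$, $i''_{n+1}=r$ from Definition~\ref{Def:CumulantDP} against the conventions $j_0=p$, $i_{n+1}=q$ in the single-line cumulant definition and tracking how $\Psi^{(k)}$ relabels. The main obstacle, and essentially the only nontrivial point, is the indexing: getting the shifts in $\Psi^{(k)}$ (the $i+\sum_{j\le i}m_j-m_i$ type offsets) to line up with the correct subtuples $(a_{t'_{\ell-1}},\ldots,a_{t'_\ell-1})$ and $(b_{n-t''_{\ell-1}+1},\ldots)$, and making sure the through-string at position $\ell$ sits between tensor-factor $\ell-1$ and tensor-factor $\ell$ with no off-by-one error. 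I would handle this by fixing notation for the cut points once and for all (say $m_\ell$ for the length of the $\ell$-th $a$-segment, so $t'_\ell=1+\ell-1+\sum_{j<\ell}m_j$ in the cumulant-as-$(2n+1)$-linear-map convention), verifying the identity for $k=0$ (where it reduces to $\kappa_{\pi'_0}(\mathbf{a})\otimes\kappa_{\widehat{\pi''_0}}(\mathbf{b})$ with no $\Sigma$, using $X=X^\T$ as in Proposition~\ref{Proposition:MomentCumulantFormula}), and then observing that the general case is obtained by concatenation along the through strings, each concatenation contributing one $\Sigma$. No deep idea is needed beyond this; the content is entirely in organizing the sum over $i,j$ according to the combinatorial decomposition $\Psi^{(k)}$ already provides.
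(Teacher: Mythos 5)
Your proposal is correct and follows essentially the same route as the paper: the paper formalizes your ``concatenation along the through strings'' as an induction on $k$ that peels off the outermost through string $\{t'_1,m+t''_1\}$, with the $k=0$ base case handled exactly as you describe via Lemma~\ref{Lemma:ReversedSPCumulant}. The one bookkeeping item worth making explicit is that matching the raw through-string factor $\sigma(i'_{t'_\ell},j'_{t'_\ell};i''_{t''_\ell},j''_{t''_\ell})$ with the entry $\Sigma(i'_{t'_\ell},j'_{t'_\ell};j''_{t''_\ell},i''_{t''_\ell})$ demanded by the multiplication rule \eqref{eq:SumTensorProduct} uses the symmetry \eqref{eq:saCovarianceMapping} of the covariance mapping.
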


To illustrate the previous lemma, consider the following.

\begin{example}
Let $\pi_1$ and $\pi_2$ be the double-line pairings in Example~\ref{Example:DoubleLinePairings}. For appropriate ${\bf a}$ and ${\bf b}$,
\al{
\kappa_{\pi_1}({\bf a};{\bf b}) &= [(a_0\eta(a_1)a_2)\otimes b_0] \Sigma [a_3\otimes(b_1\eta(b_2)b_3\eta(b_4\eta(b_5)b_6)b_7)] \Sigma [a_4 \otimes b_8],\\
\kappa_{\pi_2}({\bf a};{\bf b}) &= [(a_0\eta(a_1)a_2)\otimes b_0] \Sigma [(a_3\eta(a_4)a_5)\otimes(b_1\eta(b_2)b_3)] \Sigma [a_6\otimes(b_4\eta(b_5)b_6)].
}
\end{example}

\begin{proof}[Proof of Lemma~\ref{Lemma:CumulantDPSP}]
We proof this lemma by induction on $k\geq0$. If $\pi\in\DP_{m,n}^{(0)}$, Proposition~\ref{Prop:BijectionSPDP} implies that $\pi=\Psi^{(0)}(\pi'_0;\pi''_0)$ where $\pi'_0$ and $\pi''_0$ are pairings of $[1,m]$ and $[1,n]$, respectively. In particular,
\eq{\prod_{\underset{\pi}{u \sim v}} \sigma(i_u,j_u;i_v,j_v) = \prod_{\underset{\pi'_0}{u \sim v}} \sigma(i'_u,j'_u;i'_v,j'_v) \times \prod_{\underset{\pi''_0}{u \sim v}} \sigma(i''_u,j''_u;i''_v,j''_v)}
for every $i',j':[m]\to[d]$ and $i'',j'':[n]\to d$. Fix $p,q,r,s\in[d]$. The previous equation implies that
\al{
\kappa_\pi({\bf a};{\bf b})(p,q;r,s) &= \SUM{\repdc{i'}{1}{m}}\SUM{\repdc{j'}{1}{m}}  \left[\prod_{w=0}^m a_w(j'_w,i'_{w+1})\right] \prod_{\underset{\pi'_0}{u \sim v}} \sigma(i'_u,j'_u;i'_v,j'_v)\times\\
& \quad \quad \quad \quad \quad \quad \quad \quad \times \SUM{\repdc{i''}{1}{n}}\SUM{\repdc{j''}{1}{n}} \left[\prod_{w=0}^n b_{n \minus w}^\T(j''_w,i''_{w+1})\right] \prod_{\underset{\pi''_0}{u \sim v}} \sigma(i''_u,j''_u;i''_v,j''_v),
}
with $j'_0:= p$, $i'_{m+1}:= q$, $j''_0:= s$ and $i''_{n+1}:= r$. The right-hand side of the previous equation is the product the cumulants of two single-line pairings. Specifically,
\eq{\kappa_\pi({\bf a};{\bf b})(p,q;r,s) = \kappa_{\pi_0'}(\repdc{a}{0}{n})(p,q) \kappa_{\pi_0''}(\repdc{b^\T}{n}{0})(s,r).}
By Lemma~\ref{Lemma:ReversedSPCumulant}, we have that
\al{
\kappa_\pi({\bf a};{\bf b})(p,q;r,s) &= \kappa_{\pi_0'}(\repdc{a}{0}{n})(p,q) \kappa_{\widehat{\pi_0''}}(\repdc{b}{0}{n})(r,s) = \left[\kappa_{\pi_0'}({\bf a}) \otimes \kappa_{\widehat{\pi_0''}}({\bf b})\right](p,q;r,s).
}
Since the last equality holds for all $p,q,r,s\in[d]$, we conclude that $\kappa_\pi({\bf a},{\bf b})=\kappa_{\pi_0'}({\bf a}) \otimes \kappa_{\widehat{\pi_0''}}({\bf b})$.

Assume that the lemma is true for $k-1$. Let $\pi\in\DP_{m,n}^{(k)}$ be as in the statement of the lemma. By definition of $\Psi^{(k)}$, we have that
\eq{\pi = \pi_0' \cup (\pi_0''+m+t_1'') \cup \{t_1',m+t_1''\} \cup \tilde{\pi},}
where $\tilde{\pi}=\pi|_{(t'_1,m+t''_1)}$ is a pairing of $\{t_1'+1,\ldots,m+t_1''-1\}$. Recall that, by assumption, the covariance mapping $\sigma$ satisfies \eqref{eq:saCovarianceMapping}. The previous equality implies that \dsty{\prod_{\underset{\pi}{u \sim v}} \sigma(i_u,j_u;i_v,j_v)} equals
\al{
\prod_{\underset{\pi'_0}{u \sim v}} \sigma(i'_u,j'_u;i'_v,j'_v) \times \prod_{\underset{\pi''_0+m+t_1''}{u \sim v}} \sigma(i''_{u \minus m},j''_{u \minus m};i''_{v \minus m},j''_{v \minus m}) \times \sigma(i'_{t_1'},j'_{t_1'};j''_{t_1''},i''_{t_1''}) \times \prod_{\underset{\tilde{\pi}}{u \sim v}} \sigma(i_u,j_u;i_v,j_v),}
and therefore
\eqn{eq:InductionSplit}{\kappa_\pi({\bf a};{\bf b})(p,q;r,s) = \SUM{i'_{t_1'},j'_{t_1'},i''_{t_1''},j''_{t_1''}} A(i'_{t_1'},j''_{t_1''}) \Sigma(i'_{t_1'},j'_{t_1'};j''_{t_1''},i''_{t_1''}) B(j'_{t_1'},i''_{t_1''}),}
where
\al{
A(i'_{t_1'},j''_{t_1''}) &= \SUM{\repdc{i'}{1}{t_1' \minus 1}}\SUM{\repdc{j'}{1}{t_1' \minus 1}} \left[\prod_{w=0}^{t_1'-1} a_w(j'_w,i'_{w+1})\right] \prod_{\underset{\pi'_0}{u \sim v}} \sigma(i'_u,j'_u;i'_v,j'_v) \times\\
& \quad \quad \quad \times \SUM{\repdc{i''}{t_1''+1}{n}}\SUM{\repdc{j''}{t_1''+1}{n}} \left[\prod_{w=t_1''}^n b_{n \minus w}^\T(j''_w,i''_{w+1})\right] \prod_{\underset{\pi''_0+m+t_1''}{u \sim v}} \sigma(i''_{u \minus m},j''_{u \minus m};i''_{v \minus m},j''_{v \minus m}),\\
B(j'_{t_1'},i''_{t_1''}) &= \SUM{\repdc{i'}{t_1'+1}{m}\\\repdc{j'}{t_1'+1}{m}} \SUM{\repdc{i''}{1}{t_1'' \minus 1}\\\repdc{j''}{1}{t_1'' \minus 1}}  \left[\prod_{w=t_1'}^m a_w(j'_w,i'_{w+1})\right] \left[\prod_{w=0}^{t_1''-1} b_{n \minus w}^\T(j''_w,i''_{w+1})\right] \prod_{\underset{\tilde{\pi}}{u \sim v}} \sigma(i_u,j_u;i_v,j_v),
}
with $j'_0:= p$, $i'_{m+1}:= q$, $j''_0:= s$, and $i''_{n+1}:= r$. Note that $u \sim v$ under $\pi_0''+m+t_1''$ if and only if $u-m-t_1'' \sim v-m-t_1''$ under $\pi_0''$. The change of variables $u - m - t_1'' \to u$ and $v - m - t_1'' \to v$ leads to
\eq{\prod_{\underset{\pi''_0+m+t_1''}{u \sim v}} \sigma(i''_{u \minus m},j''_{u \minus m};i''_{v \minus m},j''_{v \minus m}) = \prod_{\underset{\pi''_0}{u \sim v}} \sigma(i''_{u + t_1''},j''_{u + t_1''};i''_{v + t_1''},j''_{v + t_1''}).}
Similarly, the change of variables $w-t_1'' \to w$ leads to
\eq{\prod_{w=t_1''}^n b_{n \minus w}^\T(j''_w,i''_{w+1}) = \prod_{w=0}^{n-t_1''} b_{n \minus t_1'' \minus w}^\T(j''_{w+t_1''},i''_{w+1+t_1''}).}
A straightforward change of variables and the base of induction then show that
\eqn{eq:InductionBase}{A(i'_{t_1'},j''_{t_1''}) = \left[\kappa_{\pi_0'}(a_0,\ldots,a_{t_1'-1}) \otimes \kappa_{\widehat{\pi_0''}}(\repdc{b}{0}{n-t_1''})\right](p,i'_{t_1'};r,j''_{t_1''}).}
Note that $\tilde{\pi}-t_1'$ is an element of $\DP_{m-t'_1,t''_1-1}^{(k-1)}$. Mutatis mutandis, changing variables we obtain that
\eqn{eq:InductionDefinitionKappa}{B(j'_{t_1'},i''_{t_1''}) = \kappa_{\tilde{\pi} \minus t_1'}(\repdc{a}{t_1'}{m};\repdc{b}{n \minus t_1''+1}{n})(j'_{t_1'},q;i''_{t_1''},s).}
Recall the product formula in \eqref{eq:SumTensorProduct}. By plugging \eqref{eq:InductionBase} and \eqref{eq:InductionDefinitionKappa} in  \eqref{eq:InductionSplit}, we conclude that
\eq{\kappa_\pi({\bf a};{\bf b}) = \left[\kappa_{\pi_0'}(a_0,\ldots,a_{t_1'-1}) \otimes \kappa_{\widehat{\pi_0''}}(\repdc{b}{0}{n-t_1''})\right] \Sigma \kappa_{\tilde{\pi} \minus t_1'}(\repdc{a}{t_1'}{m};\repdc{b}{n-t_1''+1}{n}).}
The result follows after applying the induction hypothesis to $\kappa_{\tilde{\pi} \minus t_1'}$.
\end{proof}

With the usual abuse of notation, the previous lemma reads as
\eqn{eq:CumulantDPSP}{\kappa_{\Psi^{(k)}(\repdc{\pi'}{0}{k};\repdc{\pi''}{k}{0})}(a;b) = \left[\kappa_{\pi'_0}(a) \otimes \kappa_{\widehat{\pi''_0}}(b)\right] \Sigma \cdots \Sigma \left[\kappa_{\pi'_k}(a) \otimes \kappa_{\widehat{\pi''_k}}(b)\right].}
As with $\SP$, there is a Cauchy transform associated to $\DP$.

\begin{definition}
Let $X\in\Mat{d}{\calA}$ be a MSO semicircular element with covariance $\sigma$. We define the double-line Cauchy transform $G_D:\MAT_d\times\MAT_d\to\MAT_{d^2}$ of $X$ by
\eq{G_D(a,b) = \sum_{\pi\in\DP} \kappa_\pi(a^{-1};b^{-1}).}
\end{definition}

This Cauchy transform is closely related to the single-line Cauchy transform.

\begin{proposition}
\label{Prop:CauchyTranformDPSP}
Let $X\in\Mat{d}{\calA}$ be a MSO semicircular element with covariance $\sigma$. The double-line Cauchy transform of $X$ is given by
\eq{G_D(a,b) = \left[G_S(a)\otimes G_S(b)\right] \left({\rm I}_{d^2} - \Sigma \left[G_S(a)\otimes G_S(b)\right]\right)^{-1}.}
\end{proposition}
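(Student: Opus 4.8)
The plan is to expand the defining series of $G_D$ according to the number of through strings and then invoke the bijection $\Psi^{(k)}$ of Proposition~\ref{Prop:BijectionSPDP} together with the factorization of double-line cumulants in Lemma~\ref{Lemma:CumulantDPSP}. Since every double-line pairing has a well-defined number of through strings, $\DP = \bigcup_{k\geq0}\DP^{(k)}$ is a disjoint union, so at the level of formal power series
\[
G_D(a,b) = \sum_{k\geq0}\ \sum_{\pi\in\DP^{(k)}}\kappa_\pi(a^{-1};b^{-1}).
\]
For each fixed $k$ I would use that $\Psi^{(k)}$ is a bijection of $\SP^{k+1}\times\SP^{k+1}$ onto $\DP^{(k)}$ to replace the inner sum by a sum over all tuples $(\pi'_0,\dots,\pi'_k)$ and $(\pi''_0,\dots,\pi''_k)$ of single-line pairings; by \eqref{eq:CumulantDPSP} each summand is then the product
\[
\bigl[\kappa_{\pi'_0}(a^{-1})\otimes\kappa_{\widehat{\pi''_0}}(b^{-1})\bigr]\,\Sigma\cdots\Sigma\,\bigl[\kappa_{\pi'_k}(a^{-1})\otimes\kappa_{\widehat{\pi''_k}}(b^{-1})\bigr].
\]

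Next I would push the summations inside this product. Matrix multiplication is multilinear, the tensor product is bilinear, and $\Sigma$ is a fixed $d^2\times d^2$ matrix, so the sum over all $2(k+1)$ pairings factors across the $k+1$ blocks. Moreover $\pi\mapsto\widehat{\pi}$ restricts to a bijection of $\SP_n$ onto itself for every $n$ (reversing the order of $n$ points preserves both the pairing property and non-crossingness, and the operation is an involution), so $\sum_{\pi''\in\SP}\kappa_{\widehat{\pi''}}(b^{-1}) = \sum_{\rho\in\SP}\kappa_{\rho}(b^{-1}) = G_S(b)$. Together with $\sum_{\pi'\in\SP}\kappa_{\pi'}(a^{-1}) = G_S(a)$ this gives, writing $P := G_S(a)\otimes G_S(b)$,
\[
\sum_{\pi\in\DP^{(k)}}\kappa_\pi(a^{-1};b^{-1}) = P\,(\Sigma P)^{k}.
\]

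Finally I would sum the geometric series: $G_D(a,b) = \sum_{k\geq0} P(\Sigma P)^k = P\sum_{k\geq0}(\Sigma P)^k = P\,({\rm I}_{d^2}-\Sigma P)^{-1}$, which is the claimed identity. This last step is to be read in the formal power series ring in which $G_D$ lives: since $G_S(a) = a^{-1}+(\text{higher order})$ and likewise for $G_S(b)$, the matrix $\Sigma P$ has no constant term and each $(\Sigma P)^k$ is supported in bidegrees $\geq(k,k)$, so $\sum_{k\geq0}(\Sigma P)^k$ is summable and equals $({\rm I}_{d^2}-\Sigma P)^{-1}$; the rearrangement $\sum_k\sum_{\pi\in\DP^{(k)}} = \sum_{\pi\in\DP}$ is legitimate for the same reason. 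The step I expect to require the most care is the bookkeeping in the reindexing — verifying that, after applying $\Psi^{(k)}$ and the hat-bijection, the sum of products of tensor-product blocks separated by copies of $\Sigma$ genuinely factors block by block and produces exactly $G_S(a)\otimes G_S(b)$ in each slot, with no residual transpose or order reversal on the $b$-side; the rest is the formal geometric series manipulation above.
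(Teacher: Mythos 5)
Your proof is correct and follows essentially the same route as the paper's: expand $G_D$ over the number of through strings, apply the bijection $\Psi^{(k)}$ and the factorization \eqref{eq:CumulantDPSP}, use that $\{\hat{\pi}:\pi\in\SP\}=\SP$ to recover $G_S(b)$ in each slot, and sum the resulting geometric series. The extra care you take in justifying the formal summability (noting that $\Sigma[G_S(a)\otimes G_S(b)]$ has no constant term) is a welcome detail that the paper leaves implicit.
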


\begin{proof}
Observe that
\eqn{eq:ExpansionGDCumulants}{G_D(a,b) = \sum_{\pi\in\DP} \kappa_\pi(a^{-1};b^{-1}) = \sum_{k\geq0} \sum_{\pi\in\DP^{(k)}} \kappa_\pi(a^{-1};b^{-1}).}
The bijection in Proposition~\ref{Prop:BijectionSPDP} implies that
\eq{\sum_{\pi\in\DP^{(k)}} \kappa_\pi(a^{-1};b^{-1}) = \SUM{\repdc{\pi'}{0}{k}\in\SP}\SUM{\repdc{\pi''}{0}{k}\in\SP} \kappa_{\Psi^{(k)}(\pi'_0,\ldots,\pi'_k,\repdc{\pi''}{k}{0})}(a^{-1};b^{-1}).}
By \eqref{eq:CumulantDPSP}, Definition~\ref{Def:CauchyTransformSP}, and the fact that $\{\hat{\pi} : \pi\in\SP\} = \SP$,
\al{
\sum_{\pi\in\DP^{(k)}} \kappa_\pi(a^{-1};b^{-1}) & = \SUM{\repdc{\pi'}{0}{k}\in\SP}\SUM{\repdc{\pi''}{0}{k}\in\SP} \left[\kappa_{\pi'_0}(a^{-1}) \otimes \kappa_{\widehat{\pi''_0}}(b^{-1})\right] \Sigma \cdots \Sigma \left[\kappa_{\pi'_k}(a^{-1}) \otimes \kappa_{\widehat{\pi''_k}}(b^{-1})\right]\\
& = \left[G_S(a)\otimes G_S(b)\right] \left(\Sigma \left[G_S(a)\otimes G_S(b)\right]\right)^k.
}
Plugging the previous equation in \eqref{eq:ExpansionGDCumulants}, we conclude that
\al{
G_D(a,b) &= \sum_{k\geq 0} \left[G_S(a)\otimes G_S(b)\right] \left(\Sigma \left[G_S(a)\otimes G_S(b)\right]\right)^k\\
&= \left[G_S(a)\otimes G_S(b)\right] \left({\rm I}_{d^2} -\Sigma \left[G_S(a)\otimes G_S(b)\right]\right)^{-1},
}
as required.
\end{proof}

The following function will play an important role in the next section.

\begin{definition}
Let $X\in\Mat{d}{\calA}$ be a MSO semicircular element with covariance $\sigma$. We define
\eq{H(a,b) = \sum_{\pi\in\DP^{||}} \kappa_\pi({\rm I}_d,a^{-1},\ldots,a^{-1},{\rm I}_d;{\rm I}_d,b^{-1},\ldots,b^{-1},{\rm I}_d).}
\end{definition}

In this case, we have the following.

\begin{proposition}
\label{Prop:H}
Let $X\in\Mat{d}{\calA}$ be a MSO semicircular element with covariance $\sigma$. Then
\eq{H(a,b) = ({\rm I}_{d^2}-\Sigma[G_S(a)\otimes G_S(b)])^{-1}\Sigma.}
\end{proposition}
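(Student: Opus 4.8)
The plan is to compute $H(a,b)$ by splitting the sum over $\DP^{||}$ according to the number of through strings and exploiting the bijection in \eqref{eq:BijectionDPDP} between $\DP$ and $\DP^{||}\setminus\DP_{1,1}$. First I would isolate the contribution of $\DP_{1,1}\cap\DP^{||}$: the unique such pairing is $\pi=(1,1,\{\{1,2\}\})$, and by Definition~\ref{Def:CumulantDP} its cumulant $\kappa_\pi({\rm I}_d,{\rm I}_d;{\rm I}_d,{\rm I}_d)$ evaluates (using $j'_0=p$, $i'_2=q$, $j''_0=s$, $i''_2=r$ and a single $\sigma$-factor for the through string $\{1,2\}$) to the matrix $\Sigma$ itself; that is, this term contributes $\Sigma$ to $H(a,b)$.

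Next I would handle $\DP^{||}\setminus\DP_{1,1}$. Each $\tilde\pi\in\DP_{m+2,n+2}\cap\DP^{||}$ arises from a unique $\pi\in\DP_{m,n}$ via \eqref{eq:BijectionDPDP}, which prepends and appends a through string $\{1,m+n+4\}$, $\{m+2,m+3\}$ and shifts the rest by $h$. The point is that when we evaluate $\kappa_{\tilde\pi}({\rm I}_d,a^{-1},\ldots,a^{-1},{\rm I}_d;{\rm I}_d,b^{-1},\ldots,b^{-1},{\rm I}_d)$, the two boundary through strings each contribute a factor of $\Sigma$ (sandwiching the inner data) exactly as in the inductive step of Lemma~\ref{Lemma:CumulantDPSP}, and the interior reduces to $\kappa_\pi(a^{-1};b^{-1})$ with the ${\rm I}_d$'s at the ends of $\tilde\pi$ absorbed into the index identifications. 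Carrying out this bookkeeping (essentially a repetition of the argument already used to prove Lemma~\ref{Lemma:CumulantDPSP}), one gets
\eq{\sum_{\tilde\pi\in\DP^{||}\setminus\DP_{1,1}}\kappa_{\tilde\pi}({\rm I}_d,a^{-1},\ldots,{\rm I}_d;{\rm I}_d,b^{-1},\ldots,{\rm I}_d) = \Sigma\left(\sum_{\pi\in\DP}\kappa_\pi(a^{-1};b^{-1})\right)\Sigma = \Sigma\, G_D(a,b)\, \Sigma.}

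Combining the two contributions gives $H(a,b) = \Sigma + \Sigma\, G_D(a,b)\, \Sigma$. Then I would substitute the closed form from Proposition~\ref{Prop:CauchyTranformDPSP}, namely $G_D(a,b) = [G_S(a)\otimes G_S(b)]({\rm I}_{d^2}-\Sigma[G_S(a)\otimes G_S(b)])^{-1}$, and simplify: writing $P = G_S(a)\otimes G_S(b)$, we have $H(a,b) = \Sigma + \Sigma P({\rm I}_{d^2}-\Sigma P)^{-1}\Sigma = \Sigma({\rm I}_{d^2} + P({\rm I}_{d^2}-\Sigma P)^{-1}\Sigma)$; using ${\rm I}_{d^2} + P({\rm I}_{d^2}-\Sigma P)^{-1}\Sigma = ({\rm I}_{d^2}-P\Sigma)^{-1}$ (a standard resolvent identity, or equivalently $({\rm I}_{d^2}-\Sigma P)^{-1}\Sigma = \Sigma({\rm I}_{d^2}-P\Sigma)^{-1}$) yields $H(a,b) = \Sigma({\rm I}_{d^2}-P\Sigma)^{-1}$. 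It remains to observe that $\Sigma({\rm I}_{d^2}-[G_S(a)\otimes G_S(b)]\Sigma)^{-1} = ({\rm I}_{d^2}-\Sigma[G_S(a)\otimes G_S(b)])^{-1}\Sigma$, which is again the same resolvent identity, giving the claimed formula. The main obstacle is the middle step: carefully verifying that the combinatorial decomposition of pairings in $\DP^{||}$ under \eqref{eq:BijectionDPDP} translates at the level of cumulants into the sandwiching $\Sigma\,G_D(a,b)\,\Sigma$, keeping track of which ${\rm I}_d$ slots disappear into index identifications and which $\sigma$-factors become the two outer $\Sigma$'s; once that is in place, everything else is routine matrix algebra with the Neumann series.
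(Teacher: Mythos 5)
Your proposal is correct and follows essentially the same route as the paper: isolate the unique pairing in $\DP_{1,1}\cap\DP^{||}$ (contributing $\Sigma$), use the bijection \eqref{eq:BijectionDPDP} to show the remaining terms sum to $\Sigma\, G_D(a,b)\, \Sigma$, and finish with Proposition~\ref{Prop:CauchyTranformDPSP} and the resolvent identity. The only (immaterial) difference is that the paper obtains the sandwiching $\kappa_{\tilde\pi} = \Sigma\,\kappa_\pi\,\Sigma$ directly from the already-proved formula \eqref{eq:CumulantDPSP} together with $\kappa_\emptyset(a)=a$, rather than re-running the bookkeeping of Lemma~\ref{Lemma:CumulantDPSP} as you suggest.
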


\begin{proof}
Recall the bijection between $\DP$ and $\DP^{||}\bs\DP_{1,1}$ established at the end of Section~\ref{Subsection:DoubleLinePairings}, and denote it by \dsty{\DP_{m,n}\ni\pi \leftrightarrow \tilde{\pi}\in\DP_{m+2,n+2}\cap\DP^{||}}. Since $\kappa_\emptyset(a)=a$ for all $a\in\MAT_d$, equation \eqref{eq:CumulantDPSP} implies that
\eq{\kappa_{\tilde{\pi}}({\rm I}_d,a^{-1},\ldots,a^{-1},{\rm I}_d;{\rm I}_d,b^{-1},\ldots,b^{-1},{\rm I}_d) = \Sigma \kappa_\pi(a^{-1};b^{-1}) \Sigma.}
By the definition of $H(a,b)$, we obtain that
\al{
H(a,b) &= \Sigma + \sum_{\tilde{\pi}\in\DP^{||}\bs\DP_{1,1}} \kappa_{\tilde{\pi}}({\rm I}_d,a^{-1},\ldots,a^{-1},{\rm I}_d;{\rm I}_d,b^{-1},\ldots,b^{-1},{\rm I}_d)\\
&= \Sigma+\sum_{\pi\in\DP} \Sigma \kappa_\pi(a^{-1};b^{-1}) \Sigma\\
&= \Sigma + \Sigma G_D(a,b) \Sigma.
}
Proposition~\ref{Prop:CauchyTranformDPSP} then leads to $H(a,b) = \left({\rm I}_{d^2} - \Sigma \left[G_S(a)\otimes G_S(b)\right]\right)^{-1} \Sigma$.
\end{proof}

We finish this section with the following observation. Let $\pi\in\DP_{m,n}$. By Def.~\ref{Def:CumulantDP} we have that
\al{
\kappa_{\pi}({\bf a};{\bf b})(p,q;r,s) &= \SUM{\repdc{i'}{1}{m}\\\repdc{j'}{1}{m}} \SUM{\repdc{i''}{1}{n}\\\repdc{j''}{1}{n}}  \left[\prod_{w=0}^m a_w(j'_w,i'_{w+1})\right] \left[\prod_{w=0}^n b_{n \minus w}^\T(j''_w,i''_{w+1})\right] \prod_{\underset{\pi}{u \sim v}} \sigma(i_u,j_u;i_v,j_v),
}
with $j'_0:= p$, $i'_{m+1}:= q$, $j''_0:= s$, and $i''_{n+1}:= r$. If $a_0=a_m=b_0=b_n={\rm I}_d$, then
\al{
\kappa_{\pi}({\bf a};{\bf b})(p,q;r,s) &= \SUM{\repdc{i'}{2}{m}\\\repdc{j'}{1}{m\minus 1}} \SUM{\repdc{i''}{2}{n}\\\repdc{j''}{1}{n\minus 1}} \left[\prod_{w=1}^{m\minus 1} a_w(j'_w,i'_{w\plus 1})\right] \left[\prod_{w=1}^{n\minus 1} b_{n\minus w}^\T(j''_w,i''_{w\plus 1})\right] \prod_{\underset{\pi}{u \sim v}} \sigma(i_u,j_u;i_v,j_v),
}
where $i'_1:= p$, $j'_m:= q$, $i''_1:= s$, and $j''_n:= r$.

\subsection{Annular pairings}
\label{Subsection:CumulantsAnnularPairings}

Motivated by \eqref{eq:DefSOCumulants}, in this section we define the cumulants associated to the annular pairings and compute the corresponding Cauchy transform. Specifically, we have the following.

\begin{definition}
\label{Def:CumulantAP}
Let $X\in\Mat{d}{\calA}$ be a MSO semicircular element with covariance $\sigma$. We define the cumulant $\kappa_\pi:\MAT_d^{m+1}\times\MAT_d^{n+1}\to\MAT_{d^2}$ associated to $\pi\in\AP_{m,n}$ by
\eq{\kappa_\pi({\bf a};{\bf b})(p,q;r,s) = \SUM{i,j} \left[\prod_{w=0}^m a_w(j'_w,i'_{w+1})\right] \left[\prod_{w=0}^n b_{n \minus w}^\T(j''_w,i''_{w+1})\right] \prod_{\underset{\pi}{u \sim v}} \sigma(i_u,j_u;i_v,j_v),}
where $j'_0:= p$, $i'_{m+1}:= q$, $j''_0:= s$, and $i''_{n+1}:= r$.
\end{definition}

The name cumulant is justified by the fact that
\eq{\EOP_2(a_0Xa_1\cdots a_{m-1}Xa_m \otimes b_0Xb_1\cdots b_{n-1}Xb_n) = \sum_{\pi\in\AP_{m,n}} \kappa_\pi(\repdc{a}{0}{m};\repdc{b}{0}{n})}
whenever $X$ is a MSO semicircular element with covariance $\sigma$, as established in Proposition~\ref{Proposition:MomentCumulantFormula}. As we did before, we study $\AP$ via the classes $\calT_\textnormal{I}$ and $\calT_\textnormal{II}$.

\subsubsection{Type I annular pairings}

Let $\Theta:\MAT_{d^2}\to\MAT_{d^2}$ be the mapping determined by $\Theta(A)(p,q;r,s)=A(p,r;q,s)$. For example, when $d=2$, then
\eq{\Theta\left(\begin{matrix}A_{11} & A_{12} & B_{11} & B_{12} \\ A_{21} & A_{22} & B_{21} & B_{22} \\ C_{11} & C_{12} & D_{11} & D_{12} \\ C_{21} & C_{22} & D_{21} & D_{22}\end{matrix}\right) = \left(\begin{matrix}A_{11} & A_{12} & A_{21} & A_{22} \\ B_{11} & B_{12} & B_{21} & B_{22} \\ C_{11} & C_{12} & C_{21} & C_{22} \\ D_{11} & D_{12} & D_{21} & D_{22} \end{matrix}\right).}

\begin{lemma}
\label{Lemma:TypeI}
Let $X\in\Mat{d}{\calA}$ be a MSO semicircular element with covariance $\sigma$. If $\pi\in\AP_{m,n}^{(k)}\cap\calT_\textnormal{I}$ and $\{t'_1,m+t''_1\},\ldots,\{t'_k,m+t''_k\}$ are its through strings with $t'_1<\cdots<t'_k$ (so $t''_1>\cdots>t''_k$), then \dsty{\kappa_\pi({\bf a};{\bf b}) = \Theta\left(\kappa_{eb} \Theta[\kappa_{re}] \kappa_{ib}^\T\right)} where
\al{
\kappa_{re} &:= \kappa_{\pi_{re}}({\rm I}_d,\repdc{a}{t'_1}{t'_k \minus 1},{\rm I}_d;{\rm I}_d,\repdc{b}{n \minus t''_1+1}{n \minus t''_k},{\rm I}_d),\\
\kappa_{eb} &:= \kappa_{\pi_{eb}}(\repdc{a}{0}{t'_1 \minus 1};\repdc{a^\T}{m}{t'_k}),\\
\kappa_{ib} &:= \kappa_{\widehat{\pi_{ib}}}(\repdc{b}{0}{n \minus t''_1};\repdc{b^\T}{n}{n \minus t''_k+1}).
}
\end{lemma}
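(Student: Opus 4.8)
The plan is to combine the bijective decomposition of Proposition~\ref{Prop:TIDPSP} with a direct manipulation of the index‑sum formula for $\kappa_\pi$ in Definition~\ref{Def:CumulantAP}. Fix $p,q,r,s\in[d]$ and expand $\kappa_\pi({\bf a};{\bf b})(p,q;r,s)$. Since $\pi\in\calT_\textnormal{I}$, its pairs split into three disjoint groups: the pairs of $\pi_{eb}$, supported on the outer positions $[1,t'_1)\cup(t'_k,m]$; the pairs of $\pi_{ib}$, supported on the matching inner positions; and the pairs of $\pi_{re}$, supported on $[t'_1,t'_k]\cup[m+t''_k,m+t''_1]$ and comprising all $k$ through strings. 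Hence the $\sigma$‑product $\prod_{u\sim_\pi v}\sigma(i_u,j_u;i_v,j_v)$ factors as the product of the three analogous sub‑products, and because $\sigma$ depends only on index values, not positions, each sub‑product coincides with the $\sigma$‑product occurring in the $\DP$‑cumulant of the corresponding relabelled piece (via the maps $h$ of \eqref{eq:ebImplementation} and \eqref{eq:reImplementation}).

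Next I would split the weight chains accordingly. The chain $\prod_{w=0}^m a_w(j'_w,i'_{w+1})$ breaks at the outer seams $t'_1$ and $t'_k$ into a left chunk carrying $a_0,\ldots,a_{t'_1-1}$, a middle chunk carrying $a_{t'_1},\ldots,a_{t'_k-1}$ (which lies inside the regular part), and a right chunk carrying $a_{t'_k},\ldots,a_m$; likewise $\prod_{w=0}^n b_{n-w}^\T(j''_w,i''_{w+1})$ breaks at the inner seams $m+t''_k$ and $m+t''_1$. The only index variables shared between the three groups are the four seam pairs $i'_{t'_1},j'_{t'_1},i'_{t'_k},j'_{t'_k}$ on the outer circle and $i''_{t''_k},j''_{t''_k},i''_{t''_1},j''_{t''_1}$ on the inner circle; every other index variable occurs in exactly one group. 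Summing over the non‑seam variables inside each group produces precisely the $\DP$‑cumulants $\kappa_{eb}$, $\kappa_{ib}$, $\kappa_{re}$ of the statement: the reversal‑and‑transpose in the ``second‑line arguments'' $a^\T_m,\ldots,a^\T_{t'_k}$ of $\kappa_{eb}$ (resp. the hat on $\pi_{ib}$ together with the transposed $b$'s in $\kappa_{ib}$) merely records the folding of the right outer chunk (resp. of the reversed inner circle) onto the second line of a double‑line diagram, exactly as in the identity $(b_0X\cdots Xb_n)^\T=b_n^\T X\cdots Xb_0^\T$ from the proof of Proposition~\ref{Proposition:MomentCumulantFormula}; Lemma~\ref{Lemma:ReversedSPCumulant} and Lemma~\ref{Lemma:ReversedDPCumulant} may be invoked here if a more formal derivation is wanted. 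The matrices ${\rm I}_d$ inserted at the ends of the argument lists of $\kappa_{re}$ are placeholders at the seam positions whose indices are not yet summed; this is the same device as in the observation closing Section~\ref{Subsection:MCCTDLP}.

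It then remains to carry out the sums over the four seam pairs, which glue the three $d^2\times d^2$ matrices together. Each such sum contracts one ``external'' index slot of $\kappa_{re}$ against an external slot of $\kappa_{eb}$ on one side and of $\kappa_{ib}$ on the other, i.e. it is a matrix multiplication once the $d^2$‑indices are arranged so that the contracted pair sits in adjacent positions. Arranging them this way is exactly what $\Theta$ and $(\cdot)^\T$ do: in $\kappa_\pi$ the outer external indices are $(j'_0,i'_{m+1})=(p,q)$ and the inner ones are $(j''_0,i''_{n+1})=(s,r)$, whereas each $\DP$‑cumulant's four‑index slot groups the two ``$a$‑line ends'' and the two ``$b$‑line ends''; reconciling the two groupings is the swap of the two middle index‑slots, that is $\Theta$, applied to the inner regular block ($\Theta[\kappa_{re}]$) and then to the assembled product, while the residual orientation mismatch of the inner circle contributes the transpose on $\kappa_{ib}$. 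Tracking these identifications against $j'_0=p$, $i'_{m+1}=q$, $j''_0=s$, $i''_{n+1}=r$ and the definitions of $\Theta$ and $(\cdot)^\T$ yields $\kappa_\pi({\bf a};{\bf b})=\Theta(\kappa_{eb}\,\Theta[\kappa_{re}]\,\kappa_{ib}^\T)$.

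The genuine work is the reassembly step of the preceding paragraph: the combinatorial skeleton — a Type~I annular diagram is literally three double‑line diagrams glued at the through‑string seams — is immediate, but verifying that the gluing produces exactly the nesting $\Theta(\kappa_{eb}\,\Theta[\kappa_{re}]\,\kappa_{ib}^\T)$ rather than some other arrangement of $p,q,r,s$ is the delicate point and the most likely source of a mistake. A secondary subtlety is checking that the intermediate through strings $t'_2,\ldots,t'_{k-1}$ are absorbed into $\kappa_{re}$ and create no further seams; this is guaranteed by the orderings $t'_1<\cdots<t'_k$ and $t''_1>\cdots>t''_k$ defining Type~I, which keep the regular part ``rectangular''.
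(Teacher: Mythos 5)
Your proposal is correct and follows essentially the same route as the paper's proof: decompose the Type~I pairing into $\pi_{eb}$, $\pi_{ib}$, $\pi_{re}$, factor the $\sigma$-product and the weight chains at the seams, identify each piece with the corresponding double-line cumulant (using the reversal identities of Lemmas~\ref{Lemma:ReversedSPCumulant} and~\ref{Lemma:ReversedDPCumulant} to fold the second halves onto the second line), and then contract the remaining free indices, with $\Theta$ and $(\cdot)^\T$ accounting for the index rearrangement. One small imprecision: only four variables ($i'_{t'_1}$, $j'_{t'_k}$, $j''_{t''_1}$, $i''_{t''_k}$) are genuinely shared between the three groups — the companions $j'_{t'_1}$, $i'_{t'_k}$, $i''_{t''_1}$, $j''_{t''_k}$ are internal to the regular part and get summed inside $\kappa_{re}$ — but this does not affect the final contraction, which you describe correctly.
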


To illustrate the previous lemma, consider the following.

\begin{example}
\label{Example:CumulantTI}
Let $\pi_1$ be the Type I annular pairing in Figure~\ref{Fig:AnnularPairingTypes}. For ${\bf a}\in\MAT_d^{11}$ and ${\bf b}\in\MAT_d^{9}$,
\al{
\kappa_{re} &= \Sigma [(a_2\eta(a_3\eta(a_4)a_5)a_6)\otimes(b_3\eta(b_4)b_5)] \Sigma,\\
\kappa_{eb} &= [a_0\otimes a_{10}^\T] \Sigma [a_1\otimes(a_9^\T\eta(a_8^\T)a_7^\T)],\\
\kappa_{ib} &= (b_0\eta(b_1)b_2)\otimes(b_8^\T\eta(b_7^\T)b_6^\T).
}
We then obtain $\kappa_\pi({\bf a},{\bf b})$ by plugging the previous equations in $\Theta\left(\kappa_{eb} \Theta[\kappa_{re}] \kappa_{ib}^\T\right)$.
\end{example}

\begin{proof}[Proof of Lemma~\ref{Lemma:TypeI}]
We decompose the sum in Def.~\ref{Def:CumulantAP} into three pieces. Fix $p,q,r,s\in[d]$. An appropriate change of variables, and the observation at the end of Section~\ref{Subsection:MCCTDLP}, shows that
\eq{\SUM{\repdc{i'}{t'_1+1}{t'_k}\\\repdc{j'}{t'_1}{t'_k \minus 1}} \SUM{\repdc{i''}{t''_k+1}{t''_1}\\\repdc{j''}{t''_k}{t''_1 \minus 1}} \left[\prod_{w=t'_1}^{t'_k \minus 1} a_w(j'_w,i'_{w+1})\right] \left[\prod_{w=t''_k}^{t''_1 \minus 1} b_{n \minus w}^\T(j''_w,i''_{w+1})\right] \prod_{\underset{\pi_{re}}{u \sim v}} \sigma(i_u,j_u;i_v,j_v)}
equals
\eq{\kappa_{\pi_{re}}({\rm I}_d,\repdc{a}{t'_1}{t'_k \minus 1},{\rm I}_d;{\rm I}_d,\repdc{b}{n \minus t''_1+1}{n \minus t''_k},{\rm I}_d)(i'_{t'_1},j'_{t'_k};j''_{t''_1},i''_{t''_k}),}
which in turn equals $\kappa_{re}(i'_{t'_1},j'_{t'_k};j''_{t''_1},i''_{t''_k})$. As usual, let $j'_0 := p$ and $i'_{n+1} := q$. Observe that
\eqn{eq:Sumeb}{\SUM{\repdc{i'}{1}{t'_1 \minus 1}\\\repdc{j'}{1}{t'_1 \minus 1}} \SUM{\repdc{i'}{t'_k+1}{n}\\\repdc{j'}{t'_k+1}{n}} \left[\prod_{w=0}^{t'_1 \minus 1} a_w(j'_w,i'_{w+1})\right] \left[\prod_{w=t'_k}^{m} a_w(j'_w,i'_{w+1})\right] \prod_{\underset{\pi_{eb}}{u \sim v}} \sigma(i_u,j_u;i_v,j_v)}
can be written as
\eq{\SUM{\repdc{i'}{1}{t'_1 \minus 1}\\\repdc{j'}{1}{t'_1 \minus 1}} \SUM{\repdc{i'}{t'_k+1}{n}\\\repdc{j'}{t'_k+1}{n}} \left[\prod_{w=0}^{t'_1 \minus 1} a_w(j'_w,i'_{w+1})\right] \left[\prod_{w=t'_k}^{m} c_{m \minus w}^\T(j'_w,i'_{w+1})\right] \prod_{\underset{\pi_{eb}}{u \sim v}} \sigma(i_u,j_u;i_v,j_v),}
where $c_{m \minus w}=a_w^\T$ for $t'_k\leq w\leq m$. After a suitable change of variables, Def.~\ref{Def:CumulantDP} shows that \eqref{eq:Sumeb} equals $\kappa_{\pi_{eb}}(\repdc{a}{0}{t'_1 \minus 1};\repdc{c}{0}{m \minus t'_k})(p,i'_{t'_1};q,j'_{t'_k})$, which equals
\eq{\kappa_{\pi_{eb}}(\repdc{a}{0}{t'_1 \minus 1};\repdc{a^\T}{m}{t'_k})(p,i'_{t'_1};q,j'_{t'_k}) = \kappa_{eb}(p,i'_{t'_1};q,j'_{t'_k}).}
Let $j''_0 := s$ and $i''_{n+1} := r$. As with $\pi_{eb}$, we have that
\eq{\SUM{\repdc{i''}{1}{t''_k \minus 1}\\\repdc{j''}{1}{t''_k \minus 1}} \SUM{\repdc{i''}{t''_1+1}{n}\\\repdc{j''}{t''_1+1}{n}} \left[\prod_{w=0}^{t''_k \minus 1} b_{n \minus w}^\T(j''_w,i''_{w+1})\right] \left[\prod_{k=t''_1}^n b_{n \minus w}^\T(j''_w,i''_{w+1})\right] \prod_{\underset{\pi_{ib}}{u \sim v}} \sigma(i_u,j_u;i_v,j_v)}
equals $\kappa_{\pi_{ib}}(\repdc{b^\T}{n}{n \minus t''_k+1};\repdc{b}{0}{n \minus t''_1})(s,i''_{t''_k};r,j''_{t''_1})$. By Lemma~\ref{Lemma:ReversedDPCumulant}, it equals
\eq{\kappa_{\widehat{\pi_{ib}}}(\repdc{b}{0}{n-t''_1};\repdc{b^\T}{n}{n-t''_k+1})(r,j''_{t''_1};s,i''_{t''_k}) = \kappa_{ib}(r,j''_{t''_1};s,i''_{t''_k}).}
Altogether, we obtain that
\al{
\kappa_\pi({\bf a};{\bf b})(p,q;r,s) &= \SUM{i'_{t'_1},j'_{t'_k},i''_{t''_k},j''_{t''_1}} \kappa_{eb}(p,i'_{t'_1};q,j'_{t'_k}) \kappa_{re}(i'_{t'_1},j'_{t'_k};j''_{t''_1},i''_{t''_k}) \kappa_{ib}(r,j''_{t''_1};s,i''_{t''_k})\\
&= \SUM{i'_{t'_1},j'_{t'_k},i''_{t''_k},j''_{t''_1}} \kappa_{eb}(p,i'_{t'_1};q,j'_{t'_k}) \Theta[\kappa_{re}](i'_{t'_1},j''_{t''_1};j'_{t'_k},i''_{t''_k}) \kappa_{ib}^\T(j''_{t''_1},r;i''_{t''_k},s)\\
&= [\kappa_{eb}\Theta[\kappa_{re}]\kappa_{ib}^\T](p,r;q,s)\\
&= \Theta(\kappa_{eb}\Theta[\kappa_{re}]\kappa_{ib}^\T)(p,q;r,s).
}
Since $p,q,r,s\in[d]$ are arbitrary, we conclude that $\kappa_\pi({\bf a};{\bf b}) = \Theta(\kappa_{eb} \Theta[\kappa_{re}] \kappa_{ib}^\T)$.
\end{proof}

In particular, for $a,b\in\MAT_d$,
\eqn{eq:CumulantTI}{\kappa_\pi(a;b) = \Theta\left(\kappa_{\pi_{eb}}(a;a^\T)\Theta[\kappa_{\pi_{re}}({\rm I}_d,\repdc{a}{}{},{\rm I}_d;{\rm I}_d,\repdc{b}{}{},{\rm I}_d)]\kappa_{\widehat{\pi_{ib}}}(b;b^\T)^\T\right).}
As before, we get a rather neat expression when we sum over all $\pi$ in $\calT_\textnormal{I}$.

\begin{proposition}
\label{Prop:SumTI}
Let $X\in\Mat{d}{\calA}$ be a MSO semicircular element with covariance $\sigma$. Then
\eq{\SUM{\pi\in\calT_\textnormal{I}} \kappa_\pi(a^{-1};b^{-1}) = \Theta\left(G_D(a,a^\T)\Theta[H(a,b)]G_D(b,b^\T)^\T\right).}
\end{proposition}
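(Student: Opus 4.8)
The plan is to evaluate the sum over $\calT_\textnormal{I}$ by decoupling it into three independent sums along the bijection $\calT_\textnormal{I}\ni\pi\mapsto(\pi_{eb},\pi_{ib},\pi_{re})\in\DP\times\DP\times\DP^{||}$ of Proposition~\ref{Prop:TIDPSP}. The key input is \eqref{eq:CumulantTI}: replacing $a,b$ there by $a^{-1},b^{-1}$ and using $(a^\T)^{-1}=(a^{-1})^\T$, it reads
\[
\kappa_\pi(a^{-1};b^{-1})=\Theta\!\left(\kappa_{\pi_{eb}}\!\big(a^{-1};(a^{-1})^\T\big)\,\Theta\big[\kappa_{\pi_{re}}({\rm I}_d,a^{-1},\dots,a^{-1},{\rm I}_d;{\rm I}_d,b^{-1},\dots,b^{-1},{\rm I}_d)\big]\,\kappa_{\widehat{\pi_{ib}}}\!\big(b^{-1};(b^{-1})^\T\big)^{\!\T}\right)
\]
for every $\pi\in\calT_\textnormal{I}$. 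The first factor depends on $\pi$ only through $\pi_{eb}$, the middle one only through $\pi_{re}$, and the last one only through $\pi_{ib}$. Since $\Theta$, transposition, and matrix multiplication are (bi)linear, I would sum this identity over $\pi\in\calT_\textnormal{I}$ — equivalently, over $(\pi_{eb},\pi_{ib},\pi_{re})\in\DP\times\DP\times\DP^{||}$ — and pull the three sums through these operations; all of this is legitimate at the level of formal power series in the entries of $a^{-1}$ and $b^{-1}$, which is the sense in which the Cauchy transforms are defined.

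It then remains to recognize the three inner sums. By the definition of $G_D$ one has $\sum_{\pi_{eb}\in\DP}\kappa_{\pi_{eb}}(a^{-1};(a^{-1})^\T)=G_D(a,a^\T)$. For the interior boundary, Lemma~\ref{Lemma:ReversedDPCumulant} shows that $\rho\mapsto\hat\rho$ maps $\DP_{m,n}$ bijectively onto $\DP_{n,m}$, hence is a bijection of $\DP$ onto itself, so $\sum_{\pi_{ib}\in\DP}\kappa_{\widehat{\pi_{ib}}}(b^{-1};(b^{-1})^\T)=\sum_{\rho\in\DP}\kappa_\rho(b^{-1};(b^{-1})^\T)=G_D(b,b^\T)$, and the corresponding factor is $G_D(b,b^\T)^\T$. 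Finally, by the very definition of $H$, $\sum_{\pi_{re}\in\DP^{||}}\kappa_{\pi_{re}}({\rm I}_d,a^{-1},\dots,a^{-1},{\rm I}_d;{\rm I}_d,b^{-1},\dots,b^{-1},{\rm I}_d)=H(a,b)$. Substituting these three identities into the summed identity yields $\Theta\big(G_D(a,a^\T)\,\Theta[H(a,b)]\,G_D(b,b^\T)^\T\big)$, which is the assertion.

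The one step deserving care is the decoupling: one must know that as $\pi$ ranges over $\calT_\textnormal{I}$ the triple $(\pi_{eb},\pi_{ib},\pi_{re})$ ranges over \emph{all} of $\DP\times\DP\times\DP^{||}$ with no constraint linking the three components, and that each factor in \eqref{eq:CumulantTI} depends on $\pi$ only through the indicated component. This is exactly the content of Proposition~\ref{Prop:TIDPSP} together with the identifications \eqref{eq:ebImplementation}--\eqref{eq:reImplementation}; in particular the two boundary entries ${\rm I}_d$ appearing in the regular-part cumulant sit at the distinguished positions that define $\DP^{||}$, which is why $\pi_{re}$ lies in $\DP^{||}$ and the corresponding sum is exactly $H(a,b)$ rather than some larger object. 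Everything else is immediate from the definitions of $G_D$ and $H$ (and, if a closed form is wanted, from Propositions~\ref{Prop:CauchyTranformDPSP} and~\ref{Prop:H}).
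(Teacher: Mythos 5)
Your proposal is correct and follows essentially the same route as the paper: decouple the sum via the bijection of Proposition~\ref{Prop:TIDPSP}, apply \eqref{eq:CumulantTI}, and use linearity of $\Theta$ together with the fact that $\pi\mapsto\hat{\pi}$ is a bijection of $\DP$ onto itself to recognize the three inner sums as $G_D(a,a^\T)$, $H(a,b)$, and $G_D(b,b^\T)$. Your remarks on why the decoupling is legitimate and why $\pi_{re}$ lands exactly in $\DP^{||}$ make explicit what the paper leaves implicit, but the substance is identical.
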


\begin{proof}
By Proposition~\ref{Prop:TIDPSP}, the map $\calT_\textnormal{I}\ni\pi\mapsto(\pi_{eb},\pi_{ib},\pi_{re})\in\DP\times\DP\times\DP^{||}$ is a bijection. In particular,
\eq{\SUM{\pi\in\calT_\textnormal{I}} \kappa_\pi(a^{-1};b^{-1}) = \SUM{\pi_{eb}\in\DP} \SUM{\pi_{ib}\in\DP} \SUM{\pi_{re}\in\DP^{||}} \kappa_{(\pi_{eb},\pi_{ib},\pi_{re})}(a^{-1};b^{-1}).}
By equation \eqref{eq:CumulantTI}, we have that \dsty{\SUM{\pi\in\calT_\textnormal{I}} \kappa_\pi(a^{-1};b^{-1})} equals
\eq{\SUM{\pi_{eb}\in\DP, \pi_{ib}\in\DP \\ \pi_{re}\in\DP^{||}} \Theta\left(\kappa_{\pi_{eb}}(a^{\minus 1};a^{\minus\T})\Theta[\kappa_{\pi_{re}}({\rm I}_d,\repdc{a^{\minus 1}}{}{},{\rm I}_d;{\rm I}_d,\repdc{b^{\minus 1}}{}{},{\rm I}_d)]\kappa_{\widehat{\pi_{ib}}}(b^{\minus 1};b^{\minus\T})^\T\right),}
where $a^{-\T} = (a^\T)^{-1}$. Since $\Theta$ is linear and $\{\hat{\pi} : \pi\in\DP\} = \DP$, we obtain that
\eq{\SUM{\pi\in\calT_\textnormal{I}} \kappa_\pi(a^{-1};b^{-1}) = \Theta\left(G_D(a,a^\T)\Theta[H(a,b)]G_D(b,b^\T)^\T\right),}
as required.
\end{proof}

\subsubsection{Type II annular pairings}

Let $A^\Gamma(p,q;r,s)=A(p,q;s,r)$ for all $A\in\MAT_{d^2}$ and $p,q,r,s\in[d]$. Also, let $\Phi(A)=\Theta(A^\Gamma)$.

\begin{lemma}
\label{Lemma:TypeII}
Let $X\in\Mat{d}{\calA}$ be a MSO semicircular element with covariance $\sigma$. If $\pi\in\AP_{m,n}^{(k)}\cap\calT_\textnormal{II}$ and $\{t'_1,m+t''_1\},\ldots,\{t'_k,m+t''_k\}$ are its through strings with $t'_1<\cdots<t'_k$ and $t''_{s+1}>\cdots>t''_k>t''_1>\cdots>t''_s$ for some $s\in[k]$, then
\eq{\kappa_\pi({\bf a};{\bf b}) = \Theta(\kappa_{eb} \Phi[\kappa_{rr}^\Gamma(\kappa_{oi}\otimes\kappa_{oe})^\Gamma\kappa_{lr}^\Gamma]\kappa_{ib}^\T),}
where
\al{
\kappa_{oi} &:= \kappa_{\pi_{oi}}(\repdc{a}{t'_s}{t'_{s+1} \minus 1}),\\
\kappa_{oe} &:= \kappa_{\widehat{\pi_{oe}}}(\repdc{b}{n \minus t''_k+1}{n \minus t''_1}),\\
\kappa_{rr} &:= \kappa_{\pi_{rr}}({\rm I}_d,\repdc{a}{t'_1}{t'_s \minus 1},{\rm I}_d;{\rm I}_d,\repdc{b}{n \minus t''_1+1}{n \minus t''_s},{\rm I}_d),\\
\kappa_{lr} &:= \kappa_{\pi_{rl}}({\rm I}_d,\repdc{a}{t'_{s+1}}{t'_k \minus 1},{\rm I}_d;{\rm I}_d,\repdc{b}{n \minus t''_{s+1}+1}{n \minus t''_k},{\rm I}_d),\\
\kappa_{eb} &:= \kappa_{\pi_{eb}}(\repdc{a}{0}{t'_1 \minus 1};\repdc{a^\T}{m}{t'_k}),\\
\kappa_{ib} &= \kappa_{\widehat{\pi_{ib}}}(\repdc{b}{0}{n \minus t''_{s+1}};\repdc{b^\T}{n}{n \minus t''_s+1}).
}
\end{lemma}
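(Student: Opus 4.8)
The plan is to mimic the proof of Lemma~\ref{Lemma:TypeI} but now tracking the two extra pieces $\pi_{oi}$ and $\pi_{oe}$ that arise because a Type II pairing is ``shifted.'' First I would fix $p,q,r,s\in[d]$ and write out $\kappa_\pi({\bf a};{\bf b})(p,q;r,s)$ via Def.~\ref{Def:CumulantAP}, then split the index set $[m+n]$ along the through strings $\{t'_1,m+t''_1\},\ldots,\{t'_k,m+t''_k\}$ together with the interior through strings indexed by $s$. Because $\pi\in\calT_\textnormal{II}$, the decomposition of Prop.~\ref{Prop:TIIDPSP} shows that the restriction $\pi|$ of the underlying pairing factors as a disjoint union of the six pieces $\pi_{eb},\pi_{ib},\pi_{rr},\pi_{lr},\pi_{oi},\pi_{oe}$, so the product $\prod_{u\sim_\pi v}\sigma(i_u,j_u;i_v,j_v)$ factors accordingly; the sum over the internal indices of each piece can then be recognized, after an affine change of variables in the indices (exactly as in the proof of Lemma~\ref{Lemma:TypeI}), as the corresponding double-line or single-line cumulant applied to the stated truncations of ${\bf a}$ and ${\bf b}$, with the appropriate transposes coming from Lemma~\ref{Lemma:ReversedSPCumulant} and Lemma~\ref{Lemma:ReversedDPCumulant} (these account for the occurrences of $a^\T$, $b^\T$, and the hats $\widehat{\pi_{ib}}$, $\widehat{\pi_{oe}}$).

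The core of the argument is then bookkeeping of which ``boundary'' index variables survive the summation and in which slots they land. After performing all six internal sums, $\kappa_\pi({\bf a};{\bf b})(p,q;r,s)$ becomes a sum over the surviving boundary indices (the endpoints of $t'_1,t'_s,t'_{s+1},t'_k$ on the outer side and of $t''_1,t''_s,t''_{s+1},t''_k$ on the inner side) of a product of the six cumulant values. The remaining work is to check that this product, once the indices are contracted in the order dictated by the nesting, is precisely the $(p,r;q,s)$-entry of the matrix product $\kappa_{eb}\,\Phi[\kappa_{rr}^\Gamma(\kappa_{oi}\otimes\kappa_{oe})^\Gamma\kappa_{lr}^\Gamma]\,\kappa_{ib}^\T$, so that applying $\Theta$ gives the $(p,q;r,s)$-entry of $\Theta(\kappa_{eb}\Phi[\cdots]\kappa_{ib}^\T)$. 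Here the key identities are: $\Theta$ swaps the roles of the ``column-of-the-first-factor'' and ``row-of-the-second-factor'' indices, $A\mapsto A^\Gamma$ swaps the last two, and $\Phi=\Theta\circ(\cdot)^\Gamma$; one verifies on entries that $(\kappa_{oi}\otimes\kappa_{oe})^\Gamma(a,b;c,d)=\kappa_{oi}(a,b)\kappa_{oe}(d,c)$ by \eqref{eq:TensorProductpqrs}, which is exactly the ``reversed'' contraction pattern forced by $\pi_{oe}$ sitting opposite the exterior boundary while $\pi_{oi}$ sits opposite the interior one. I would present a small example (paralleling Example~\ref{Example:CumulantTI}) to make the index routing transparent before doing the general contraction.

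The main obstacle I expect is precisely getting the $\Gamma$'s and $\Theta$'s in the right places: unlike the Type I case, where the regular part is a single double-line block sandwiched symmetrically, here the two ``opposite'' pieces $\kappa_{oi},\kappa_{oe}$ are single-line cumulants whose free index pairs are glued crosswise to $\kappa_{rr}$ and $\kappa_{lr}$, and the orientation mismatch between the outer and inner circles (the very thing that distinguishes Type II from Type I) forces the transpositions encoded by $(\cdot)^\Gamma$. Verifying that the chain $\kappa_{rr}^\Gamma(\kappa_{oi}\otimes\kappa_{oe})^\Gamma\kappa_{lr}^\Gamma$ contracts correctly — in particular that the $d^2\times d^2$ matrix multiplications \eqref{eq:SumTensorProduct} line up the surviving indices $i'_{t'_s},j'_{t'_{s+1}-1},\ldots$ with no leftover mismatch — is the delicate step; everything else (the affine reindexings, the identification of each sub-sum with a cumulant, the appeal to \eqref{eq:saCovarianceMapping} to move indices inside $\sigma$, and the final application of $\Theta$) is routine and parallels Lemma~\ref{Lemma:TypeI} almost verbatim.
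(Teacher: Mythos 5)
Your proposal is correct and follows essentially the same route as the paper's proof: decompose the sum in Def.~\ref{Def:CumulantAP} along the six pieces of Prop.~\ref{Prop:TIIDPSP}, identify each internal sub-sum with the corresponding single- or double-line cumulant after a change of variables (using Lemma~\ref{Lemma:ReversedSPCumulant} for the transposes and hats), and then contract the surviving boundary indices to recognize $\Theta(\kappa_{eb}\Phi[\kappa_{rr}^\Gamma(\kappa_{oi}\otimes\kappa_{oe})^\Gamma\kappa_{lr}^\Gamma]\kappa_{ib}^\T)$. Your observation that $(\kappa_{oi}\otimes\kappa_{oe})^\Gamma(a,b;c,d)=\kappa_{oi}(a,b)\kappa_{oe}(d,c)$, i.e.\ that $(\kappa_{oi}\otimes\kappa_{oe})^\Gamma=\kappa_{oi}\otimes\kappa_{oe}^\T$, is exactly the identity the paper uses to pass from its intermediate expression to the stated form, and you have correctly flagged the index routing through $\Theta$, $\Gamma$, and $\Phi$ as the only delicate step.
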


In this case, $\kappa_\pi({\bf a},{\bf b})$ can be constructed, mutatis mutandis, as in Example~\ref{Example:CumulantTI}. In fact, the next proof is very similar to that of Lemma~\ref{Lemma:TypeI}.

\begin{proof}[Proof of Lemma~\ref{Lemma:TypeII}]
A change of variables shows that
\eq{\SUM{\repdc{i'}{t'_s+1}{t'_{s+1} \minus 1}}\SUM{\repdc{j'}{t'_s+1}{t'_{s+1} \minus 1}} \left[\prod_{w=t'_s}^{t'_{s+1} \minus 1} a_w(j'_w,i'_{w+1})\right] \prod_{\underset{\pi_{oi}}{u \sim v}} \sigma(i_u,j_u;i_v,j_v)}
equals
\eq{\kappa_{\pi_{oi}}(\repdc{a}{t'_s}{t'_{s+1}-1})(j'_{t'_s},i'_{t'_{s+1}}) = \kappa_{oi}(j'_{t'_s},i'_{t'_{s+1}}).}
Recall the property of the single-line cumulants established in Lemma~\ref{Lemma:ReversedSPCumulant}. As with $\pi_{oi}$, we have that
\eq{\SUM{\repdc{i''}{t''_1+1}{t''_k \minus 1}}\SUM{\repdc{j''}{t''_1+1}{t''_k \minus 1}} \left[\prod_{w=t''_1}^{t''_k \minus 1} b_{n \minus w}^\T(j''_w,i''_{w+1})\right] \prod_{\underset{\pi_{oe}}{u \sim v}} \sigma(i_u,j_u;i_v,j_v)}
equals
\al{
\kappa_{\pi_{oe}}(\repdc{b^\T}{n-t''_1}{n-t''_k+1})(j''_{t''_1},i''_{t''_k}) &= \kappa_{\widehat{\pi_{oe}}}(\repdc{b}{n \minus t''_k+1}{n \minus t''_1})(i''_{t''_k},j''_{t''_1}) = \kappa_{oe}^\T(j''_{t''_1},i''_{t''_k}).
}
As in Lemma~\ref{Lemma:TypeI}, we have that
\eq{\SUM{\repdc{i'}{t'_1+1}{t'_s}\\\repdc{j'}{t'_1}{t'_s \minus 1}} \SUM{\repdc{i''}{t''_s+1}{t''_1}\\\repdc{j''}{t''_s}{t''_1 \minus 1}} \left[\prod_{w=t'_1}^{t'_s \minus 1} a_w(j'_w,i'_{w+1})\right] \left[\prod_{w=t''_s}^{t''_1 \minus 1} b_{n \minus w}^\T(j''_w,i''_{w+1})\right] \prod_{\underset{\pi_{rr}}{u \sim v}} \sigma(i_u,j_u;i_v,j_v)}
equals
\eq{\kappa_{\pi_{rr}}({\rm I}_d,\repdc{a}{t'_1}{t'_s \minus 1},{\rm I}_d;{\rm I}_d,\repdc{b}{n \minus t''_1+1}{n \minus t''_s},{\rm I}_d)(i'_{t'_1},j'_{t'_s};j''_{t''_1},i''_{t''_s}) = \kappa_{rr}(i'_{t'_1},j'_{t'_s};j''_{t''_1},i''_{t''_s}),}
and
\eq{\SUM{\repdc{i'}{t'_{s+1}+1}{t'_k}\\\repdc{j'}{t'_{s+1}}{t'_k \minus 1}} \SUM{\repdc{i''}{t''_k+1}{t''_{s+1}}\\\repdc{j''}{t''_k}{t''_{s+1} \minus 1}} \left[\prod_{w=t'_{s+1}}^{t'_k \minus 1} a_w(j'_w,i'_{w+1})\right] \left[\prod_{w=t''_k}^{t''_{s+1} \minus 1} b_{n \minus w}^\T(j''_w,i''_{w+1})\right] \prod_{\underset{\pi_{lr}}{u \sim v}} \sigma(i_u,j_u;i_v,j_v)}
equals
\eq{\kappa_{\pi_{lr}}({\rm I}_d,\repdc{a}{t'_{s+1}}{t'_k \minus 1},{\rm I}_d;{\rm I}_d,\repdc{b}{n \minus t''_{s+1}+1}{n \minus t''_k},{\rm I}_d)(i'_{t'_{s+1}},j'_{t'_k};j''_{t''_{s+1}},i''_{t''_k}).}
Note that, by definition, the last expression equals $\kappa_{lr}(i'_{t'_{s+1}},j'_{t'_k};j''_{t''_{s+1}},i''_{t''_k})$. In an analogous way, we have that
\eq{\SUM{\repdc{i'}{1}{t'_1 \minus 1}\\\repdc{j'}{1}{t'_1 \minus 1}} \SUM{\repdc{i'}{t'_k+1}{n}\\\repdc{j'}{t'_k+1}{n}} \left[\prod_{w=0}^{t'_1 \minus 1} a_w(j'_w,i'_{w+1})\right] \left[\prod_{w=t'_k}^{m} a_w(j'_w,i'_{w+1})\right] \prod_{\underset{\pi_{eb}}{u \sim v}} \sigma(i_u,j_u;i_v,j_v)}
equals $\kappa_{\pi_{eb}}(\repdc{a}{0}{t'_1 \minus 1};\repdc{a^\T}{m}{t'_k})(p,i'_{t'_1};q,j'_{t'_k}) = \kappa_{eb}(p,i'_{t'_1};q,j'_{t'_k})$, and
\eq{\SUM{\repdc{i''}{1}{t''_s \minus 1}\\\repdc{j''}{1}{t''_s \minus 1}} \SUM{\repdc{i''}{t''_{s+1}+1}{n}\\\repdc{j''}{t''_{s+1}+1}{n}} \left[\prod_{w=0}^{t''_s \minus 1} b_{n \minus w}^\T(j''_w,i''_{w+1})\right] \left[\prod_{k=t''_{s+1}}^n b_{n \minus w}^\T(j''_w,i''_{w+1})\right] \prod_{\underset{\pi_{ib}}{u \sim v}} \sigma(i_u,j_u;i_v,j_v)
}
equals
\eq{\kappa_{\widehat{\pi_{ib}}}(\repdc{b}{0}{n \minus t''_{s+1}};\repdc{b^\T}{n}{n \minus t''_s+1})(r,j''_{t''_{s+1}};s,i''_{t''_s}) = \kappa_{ib}(r,j''_{t''_{s+1}};s,i''_{t''_s}).}
Altogether, $\kappa_\pi({\bf a};{\bf b})(p,q;r,s)$ equals
\al{
& \hspace{-20pt} \SUM{i'_{t'_1},j'_{t'_k},i''_{t''_s},j''_{t''_{s+1}}} \hspace{-20pt} \kappa_{eb}(p,i'_{t'_1};q,j'_{t'_k}) \kappa_{ib}(r,j''_{t''_{s+1}};s,i''_{t''_s}) \times \\
& \quad \quad \times \hspace{-20pt} \SUM{j'_{t'_s},i'_{t'_{s+1}},j''_{t''_1},i''_{t''_k}} \hspace{-20pt} \kappa_{rr}(i'_{t'_1},j'_{t'_s};j''_{t''_1},i''_{t''_s}) \kappa_{oi}(j'_{t'_s},i'_{t'_{s+1}}) \kappa_{oe}^\T(j''_{t''_1},i''_{t''_k})  \kappa_{lr}(i'_{t'_{s+1}},j'_{t'_k};j''_{t''_{s+1}},i''_{t''_k}),
}
which in turn equals
\al{
& \SUM{i'_{t'_1},j'_{t'_k},i''_{t''_s},j''_{t''_{s+1}}} \hspace{-20pt} \kappa_{eb}(p,i'_{t'_1};q,j'_{t'_k}) [\kappa_{rr}^\Gamma(\kappa_{oi}\otimes\kappa_{oe}^\T)\kappa_{lr}^\Gamma](i'_{t'_1},j'_{t'_k};i''_{t''_s},j''_{t''_{s+1}}) \kappa_{ib}^\T(j''_{t''_{s+1}},r;i''_{t''_s},s)\\
& \quad \quad = \SUM{i'_{t'_1},j'_{t'_k},i''_{t''_s},j''_{t''_{s+1}}} \hspace{-20pt} \kappa_{eb}(p,i'_{t'_1};q,j'_{t'_k}) \Phi[\kappa_{rr}^\Gamma(\kappa_{oi}\otimes\kappa_{oe}^\T)\kappa_{lr}^\Gamma](i'_{t'_1},j''_{t''_{s+1}};j'_{t'_k},i''_{t''_s}) \kappa_{ib}^\T(j''_{t''_{s+1}},r;i''_{t''_s},s)\\
& \quad \quad = \kappa_{eb} \Phi[\kappa_{rr}^\Gamma(\kappa_{oi}\otimes\kappa_{oe}^\T)\kappa_{lr}^\Gamma]\kappa_{ib}^\T (p,r;q,s)\\
& \quad \quad = \Theta(\kappa_{eb} \Phi[\kappa_{rr}^\Gamma(\kappa_{oi}\otimes\kappa_{oe}^\T)\kappa_{lr}^\Gamma]\kappa_{ib}^\T)(p,q;r,s).
}
Since $p,q,r,s\in[d]$ are arbitrary, we conclude that
\eq{\kappa_\pi({\bf a};{\bf b}) = \Theta(\kappa_{eb} \Phi[\kappa_{rr}^\Gamma(\kappa_{oi}\otimes\kappa_{oe})^\Gamma\kappa_{lr}^\Gamma]\kappa_{ib}^\T),}
as required.
\end{proof}

In particular, $\Theta\left(\kappa_\pi(a;b)\right)$ equals
{\footnotesize\eq{\kappa_{\pi_{eb}}(a;a^\T) \Phi[\kappa_{\pi_{rr}}({\rm I}_d,a,\ldots,{\rm I}_d;{\rm I}_d,b,\ldots,{\rm I}_d)^\Gamma(\kappa_{\pi_{oi}}(a)\otimes\kappa_{\widehat{\pi_{oe}}}(b))^\Gamma \kappa_{\pi_{lr}}({\rm I}_d,a,\ldots,{\rm I}_d;{\rm I}_d,b,\ldots,{\rm I}_d)^\Gamma] \kappa_{\widehat{\pi_{ib}}}(b;b^\T)^\T.}}
As before, we get a rather neat expression when we sum over all $\pi$ in $\calT_\textnormal{II}$.

\begin{proposition}
\label{Prop:SumTII}
Let $X\in\Mat{d}{\calA}$ be a MSO semicircular element with covariance $\sigma$. Then
\eq{\SUM{\pi\in\calT_\textnormal{II}} \kappa_\pi(a^{-1};b^{-1}) = \Theta\left(G_D(a,a^\T)\Phi\left[H(a,b)^\Gamma (G_S(a)\otimes G_S(b))^\Gamma H(a,b)^\Gamma\right]G_D(b,b^\T)^\T\right).}
\end{proposition}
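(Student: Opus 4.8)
The plan is to follow the proof of Proposition~\ref{Prop:SumTI} step by step, with the three-piece decomposition of Type~I pairings replaced by the six-piece decomposition of Type~II pairings. By Proposition~\ref{Prop:TIIDPSP} the assignment $\pi\mapsto(\pi_{eb},\pi_{ib},\pi_{rr},\pi_{lr},\pi_{oi},\pi_{oe})$ is a bijection between $\calT_\textnormal{II}$ and $\DP\times\DP\times\DP^{||}\times\DP^{||}\times\SP\times\SP$, so I would first rewrite
\eq{\SUM{\pi\in\calT_\textnormal{II}}\kappa_\pi(a^{-1};b^{-1}) = \SUM{\pi_{eb},\pi_{ib}\in\DP}\;\SUM{\pi_{rr},\pi_{lr}\in\DP^{||}}\;\SUM{\pi_{oi},\pi_{oe}\in\SP} \kappa_{(\pi_{eb},\pi_{ib},\pi_{rr},\pi_{lr},\pi_{oi},\pi_{oe})}(a^{-1};b^{-1}).}
Next I would insert, for each term, the closed form from Lemma~\ref{Lemma:TypeII} (in the $a,b\in\MAT_d$ form displayed right after its proof): $\kappa_\pi(a^{-1};b^{-1})$ equals $\Theta$ applied to
\eq{\kappa_{\pi_{eb}}(a^{-1};a^{-\T})\,\Phi\!\left[\kappa_{\pi_{rr}}(\ast)^\Gamma\bigl(\kappa_{\pi_{oi}}(a^{-1})\otimes\kappa_{\widehat{\pi_{oe}}}(b^{-1})\bigr)^\Gamma\kappa_{\pi_{lr}}(\ast)^\Gamma\right]\kappa_{\widehat{\pi_{ib}}}(b^{-1};b^{-\T})^\T,}
where $\ast$ abbreviates the argument tuple $({\rm I}_d,a^{-1},\ldots,a^{-1},{\rm I}_d;{\rm I}_d,b^{-1},\ldots,b^{-1},{\rm I}_d)$.

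The crucial step is to push the six independent summations through every operation appearing above. Since $\Theta$, $\Phi$, $(\cdot)^\Gamma$ and transposition are linear while matrix multiplication and $\otimes$ are multilinear, each of the six indices may be summed separately. Invoking the definitions of $G_S$, $G_D$ and $H$, together with the identities $\{\hat\pi:\pi\in\DP\}=\DP$ and $\{\hat\pi:\pi\in\SP\}=\SP$, the partial sums collapse: $\sum_{\pi_{eb}}\kappa_{\pi_{eb}}(a^{-1};a^{-\T})=G_D(a,a^\T)$; $\sum_{\pi_{ib}}\kappa_{\widehat{\pi_{ib}}}(b^{-1};b^{-\T})^\T=G_D(b,b^\T)^\T$; $\sum_{\pi_{rr}}\kappa_{\pi_{rr}}(\ast)=\sum_{\pi_{lr}}\kappa_{\pi_{lr}}(\ast)=H(a,b)$; $\sum_{\pi_{oi}}\kappa_{\pi_{oi}}(a^{-1})=G_S(a)$; and $\sum_{\pi_{oe}}\kappa_{\widehat{\pi_{oe}}}(b^{-1})=G_S(b)$. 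Assembling these factors inside $\Theta\bigl(\,\cdot\,\Phi[\,\cdot^\Gamma(\cdot\otimes\cdot)^\Gamma\cdot^\Gamma\,]\,\cdot^\T\bigr)$ then produces exactly
\eq{\SUM{\pi\in\calT_\textnormal{II}}\kappa_\pi(a^{-1};b^{-1}) = \Theta\left(G_D(a,a^\T)\Phi\left[H(a,b)^\Gamma(G_S(a)\otimes G_S(b))^\Gamma H(a,b)^\Gamma\right]G_D(b,b^\T)^\T\right),}
which is the assertion.

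The only point that requires genuine care --- and the step I expect to be the main, if minor, obstacle --- is the bookkeeping of the argument tuples under the abuse-of-notation conventions. Concretely, one must verify that after the substitutions $a\mapsto a^{-1}$ and $b\mapsto b^{-1}$ (with the boundary ${\rm I}_d$'s left fixed), the $\MAT_d$-argument lists attached to $\kappa_{eb},\kappa_{ib},\kappa_{rr},\kappa_{lr},\kappa_{oi},\kappa_{oe}$ in Lemma~\ref{Lemma:TypeII} are precisely the tuples occurring in the definitions of $G_D$, $G_S$ and $H$; in particular, that the regular parts $\pi_{rr},\pi_{lr}$ carry two extra ${\rm I}_d$ endpoints and hence sum to $H(a,b)$ rather than $G_D$, and that the hats on $\pi_{ib}$ and $\pi_{oe}$ are absorbed using $\{\hat\pi:\pi\in\DP\}=\DP$ and $\{\hat\pi:\pi\in\SP\}=\SP$. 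Once this matching is in place, multilinearity finishes the proof with no further computation, just as in the proof of Proposition~\ref{Prop:SumTI}.
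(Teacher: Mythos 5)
Your proposal is correct and follows essentially the same route as the paper: the paper's proof of Proposition~\ref{Prop:SumTII} likewise invokes the bijection of Proposition~\ref{Prop:TIIDPSP}, substitutes the closed form from Lemma~\ref{Lemma:TypeII}, and uses linearity of $\Theta$, $\Phi$, $(\cdot)^\Gamma$ together with $\{\hat\pi:\pi\in\DP\}=\DP$ and $\{\hat\pi:\pi\in\SP\}=\SP$ to collapse the six independent sums into $G_D$, $H$, and $G_S$, exactly as in Proposition~\ref{Prop:SumTI}. Your bookkeeping of the argument tuples (the extra ${\rm I}_d$ endpoints on $\pi_{rr},\pi_{lr}$ giving $H(a,b)$, and the absorption of the hats) matches the paper's conventions.
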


\begin{proof}
Observe that the mappings $\Phi(\cdot)$ and $(\cdot)^\Gamma$ are both linear. Using the bijection established in Proposition~\ref{Prop:TIIDPSP}, the previous lemma implies the result as in the proof of Proposition~\ref{Prop:SumTI}.
\end{proof}

As before, consider the following.

\begin{definition}
Let $X\in\Mat{d}{\calA}$ be a MSO semicircular element with covariance $\sigma$. We define the annular Cauchy transform $G_A:\MAT_d\times\MAT_d\to\MAT_{d^2}$ of $X$ by
\eq{G_A(a,b) = \sum_{\pi\in\AP} \kappa_\pi(a^{-1};b^{-1}).}
\end{definition}

Our main combinatorial theorem is now a simple consequence Propositions~\ref{Prop:SumTI} and \ref{Prop:SumTII}.

\begin{theorem}
\label{Thm:OpVSOCauchyTransform}
Let $X\in\Mat{d}{\calA}$ be a MSO semicircular element with covariance $\sigma$. Then
\eq{G_A(a,b) = \Theta\bigg(G_D(a,a^\T) \bigg\{\Theta[H(a,b)]+\Phi\left[H(a,b)^\Gamma (G_S(a)\otimes G_S(b))^\Gamma H(a,b)^\Gamma\right]\bigg\} G_D(b,b^\T)^\T\bigg).}
\end{theorem}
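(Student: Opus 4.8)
The plan is to split the sum defining $G_A$ according to the dichotomy established in Section~\ref{Subsection:CumulantsAnnularPairings}: every annular pairing is either of Type I or of Type II, so
\eq{G_A(a,b) = \SUM{\pi\in\calT_\textnormal{I}} \kappa_\pi(a^{-1};b^{-1}) + \SUM{\pi\in\calT_\textnormal{II}} \kappa_\pi(a^{-1};b^{-1}).}
The two summands are then handled by Proposition~\ref{Prop:SumTI} and Proposition~\ref{Prop:SumTII}, respectively. Proposition~\ref{Prop:SumTI} identifies the Type I contribution as $\Theta(G_D(a,a^\T)\Theta[H(a,b)]G_D(b,b^\T)^\T)$, and Proposition~\ref{Prop:SumTII} identifies the Type II contribution as $\Theta(G_D(a,a^\T)\Phi[H(a,b)^\Gamma(G_S(a)\otimes G_S(b))^\Gamma H(a,b)^\Gamma]G_D(b,b^\T)^\T)$.

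The only remaining step is to combine the two expressions. Since $\Theta$ is linear and the outer factors $G_D(a,a^\T)$ and $G_D(b,b^\T)^\T$ are common to both terms, the sum factors as
\eq{G_A(a,b) = \Theta\bigg(G_D(a,a^\T)\bigg\{\Theta[H(a,b)]+\Phi\left[H(a,b)^\Gamma(G_S(a)\otimes G_S(b))^\Gamma H(a,b)^\Gamma\right]\bigg\}G_D(b,b^\T)^\T\bigg),}
which is exactly the claimed identity. One should remark that the manipulations are at the level of formal expressions in the entries of $a^{-1}$ and $b^{-1}$, consistent with the convention adopted for $\calG_2$ in Section~\ref{Section:Fluctuations}; the passage to an analytic statement is deferred to Section~\ref{Section:AnalyticProperties}, as indicated in the proof of the main theorem.

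Strictly speaking there is essentially no obstacle at this point: all the combinatorial work has been front-loaded into the bijections of Propositions~\ref{Prop:TIDPSP} and \ref{Prop:TIIDPSP}, the cumulant factorizations of Lemmas~\ref{Lemma:TypeI} and \ref{Lemma:TypeII}, and the resummations of Propositions~\ref{Prop:SumTI} and \ref{Prop:SumTII}. If anything requires a word of care it is the bookkeeping needed to make sure that the partition of $\AP$ into $\calT_\textnormal{I}$ and $\calT_\textnormal{II}$ is genuinely a disjoint union covering all of $\AP$ — this is immediate from the defining condition $t''_1\geq t''_k$ versus $t''_1<t''_k$ on the through strings — and that the formal series involved converge in the same regime so that term-by-term rearrangement is legitimate; both points are routine given the setup. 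The theorem therefore follows by simply adding the two propositions.
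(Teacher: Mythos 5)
Your proposal is correct and matches the paper's own argument: the paper likewise derives Theorem~\ref{Thm:OpVSOCauchyTransform} as an immediate consequence of Propositions~\ref{Prop:SumTI} and \ref{Prop:SumTII}, using the disjoint decomposition $\AP=\calT_\textnormal{I}\cup\calT_\textnormal{II}$ and the linearity of $\Theta$ to factor out the common terms $G_D(a,a^\T)$ and $G_D(b,b^\T)^\T$. Your additional remarks on disjointness and on working at the level of formal expressions are consistent with the paper's conventions.
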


All the results in this section were derived at the level of formal power series. The purpose of the next section is to extend these results to an analytical level.

\section{Some analytical properties of the MSO Cauchy transform of MSO semicircular elements}
\label{Section:AnalyticProperties}

Assume that $X$ is a MSO semicircular element. By Corollary~\ref{Corollary:MatricialCauchyTransform}, its MSO Cauchy transform $\calG_2$ equals the annular Cauchy transform $G_A$. In particular, Theorem~\ref{Thm:OpVSOCauchyTransform} implies that
\eqn{eq:AnnularCauchyTransform}{\calG_2(a,b) = \Theta\bigg(G_D(a,a^\T) \bigg\{\Theta[H(a,b)]+\Phi\left[H(a,b)^\Gamma (G_S(a)\otimes G_S(b))^\Gamma H(a,b)^\Gamma\right]\bigg\} G_D(b,b^\T)^\T\bigg).}
In this section we explore the analytical properties of the right hand side of this equation. In order to do so, we need to assume an underlying analytic framework.

We assume that our second-order probability space $(\calA,\FI,\FI_2)$ is such that $\calA = \C\langle X(p,q) : p,q\in[d]\rangle$ and that $X=(X(p,q))_{p,q}\in\Mat{d}{\calA}$ is a MSO semicircular element. Also, we assume that there exists a tracial C${}^\ast$-probability space $(\calB,\psi)$ such that $\calA\subset\calB$ and $\FI = \psi|_{\calA}$. Note that even though $\FI$ can be extended to the whole C${}^*$-algebra $\calB$, the bilinear functional $\FI_2$ is only defined on $\calA\times\calA$. In fact, in some classical examples \cite{DiaconisShahshahani1994} the bilinear functional $\FI_2$ cannot be extended continuously to the C${}^*$-algebra generated by $\calA$. We further assume that $X$ and $\MAT_d$ are algebraically free, so that monomials $a_0X \cdots Xa_n$ with $\repdc{a}{0}{n}\in\MAT_d$ are linearly independent from each other whenever they have different lengths $n$. By definition, the domain of $\EOP_2$ contains $\MAT_d \langle X\rangle \times \MAT_d \langle X\rangle$. Observe that the system
\eq{\MAT_d \langle X\rangle:= \textrm{Span}_\mathbb C\{a_0X \cdots Xa_n\colon n\in\mathbb N,a_j\in \MAT_d\}}
is norm-dense in the C${}^*$-algebra generated by $X$ and $\MAT_d$. As with $\FI_2$, the mapping $\EOP_2$ may be unbounded in the norm topology of the C${}^*$-algebra generated by $X$ and $\MAT_d$. Establishing analytical properties of $\calG_2$ amounts to extending $\EOP_2$ beyond $\MAT_d \langle X\rangle\times\MAT_d \langle X\rangle$. The following lemma establishes the bounded behavior of $\EOP_2$ in monomials from $\MAT_d\langle X\rangle$.

\begin{lemma}
If $X\in\Mat{d}{\calA}$ is a MSO semicircular element with covariance $\sigma$, then
\eq{\| \EOP_2(a_0X\cdots Xa_m \otimes b_0X\cdots Xb_n) \| \leq d \left(2d^2\sqrt{\|\Sigma\|}\right)^m \left(2d^2\sqrt{\|\Sigma\|}\right)^n \|a_0\| \cdots \|a_m\| \cdot \|b_0\| \cdots \|b_n\|}
for all $m,n\in\N$ and $\repdc{a}{0}{m},\repdc{b}{0}{n}\in\MAT_d$.
\end{lemma}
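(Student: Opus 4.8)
The plan is to combine the moment--cumulant expansion of Proposition~\ref{Proposition:MomentCumulantFormula} with a crude entrywise estimate and a count of annular pairings. Writing $\mathbf a=(a_0,\dots,a_m)$ and $\mathbf b=(b_0,\dots,b_n)$, that proposition gives $\EOP_2(a_0X\cdots Xa_m\otimes b_0X\cdots Xb_n)=M_{m,n}(\mathbf a;\mathbf b)=\sum_{\pi\in\NC_2(m,n)}\kappa_\pi(\mathbf a;\mathbf b)$. When $m+n$ is odd the index set is empty and the inequality is trivial, so I would assume $m+n$ even, hence $m+n\ge 2$. It then suffices to bound $\norm{\kappa_\pi(\mathbf a;\mathbf b)}$ uniformly in $\pi$ and to bound the number of summands.

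For the single-pairing estimate I would work entrywise from the defining formula of $\kappa_\pi$. Using $|c(k,l)|\le\norm c$ for every $c\in\MAT_d$ (and noting that transposing and reindexing the $b_w$'s does not change absolute values), the $(p,q;r,s)$-entry of $\kappa_\pi(\mathbf a;\mathbf b)$ is bounded by $\big(\prod_w\norm{a_w}\big)\big(\prod_w\norm{b_w}\big)$ times $\sum_{i,j}\prod|\sigma(i_u,j_u;i_v,j_v)|$, where the inner product runs over the $(m+n)/2$ blocks $\{u,v\}$ of $\pi$. The key point is that this multi-index sum \emph{factorizes over the blocks of $\pi$}: grouping the $2(m+n)$ summation indices into the columns $g_w:=(i_w,j_w)\in[d]^2$, $w\in[m+n]$ (none of which is one of the fixed boundary indices $p,q,r,s$), and using that the blocks of $\pi$ are disjoint and exhaust $[m+n]$, the sum equals $\big(\sum_{p,q,r,s}|\sigma(p,q;r,s)|\big)^{(m+n)/2}$. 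The inner quantity is the entrywise $\ell^1$-norm of the $d^2\times d^2$ matrix $\Sigma$, which by Cauchy--Schwarz over its $d^4$ entries is at most $d^2\norm{\Sigma}_F\le d^3\norm{\Sigma}$. Since $\norm A\le d^2\max_{p,q,r,s}|A(p,q;r,s)|$ for any $A\in\MAT_{d^2}$, this gives
\eq{\norm{\kappa_\pi(\mathbf a;\mathbf b)}\ \le\ d^2\big(d^3\norm{\Sigma}\big)^{(m+n)/2}\prod_{w=0}^m\norm{a_w}\prod_{w=0}^n\norm{b_w}.}

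To count the summands I would use the standard bound $|\NC_2(m,n)|\le 2^{m+n}$ on the number of $(m,n)$-annular non-crossing pairings (see \cite{MingoNica2004,MingoSpeicher2017}). Multiplying the two estimates yields
\eq{\norm{M_{m,n}(\mathbf a;\mathbf b)}\ \le\ 2^{m+n}d^{2+\frac32(m+n)}\norm{\Sigma}^{(m+n)/2}\prod_{w=0}^m\norm{a_w}\prod_{w=0}^n\norm{b_w},}
while the claimed right-hand side equals $2^{m+n}d^{1+2(m+n)}\norm{\Sigma}^{(m+n)/2}\prod_w\norm{a_w}\prod_w\norm{b_w}$; since $m+n\ge 2$ we have $2+\frac32(m+n)\le 1+2(m+n)$, so $d^{2+\frac32(m+n)}\le d^{1+2(m+n)}$ and the estimate closes. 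I expect the main obstacle to be the counting input $|\NC_2(m,n)|\le 2^{m+n}$: it is precisely what pins down the constant $2d^2$ (the case $d=1$ of the lemma is equivalent to it), and a self-contained derivation would go through the Type~I/Type~II decompositions of Section~\ref{Subsection:CumulantsAnnularPairings}, which express annular non-crossing pairings in terms of single-line (Catalan-counted) and double-line pairings. The one other delicate point is the block-factorization of the $\sigma$-sum; the remaining steps are routine bookkeeping.
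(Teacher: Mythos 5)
Your proof is correct and follows essentially the same route as the paper's: expand via Proposition~\ref{Proposition:MomentCumulantFormula}, bound each annular cumulant entrywise, and multiply by the count $|\NC_2(m,n)|\le 2^{m+n}$ --- the one input you cite rather than derive, and note that the paper obtains it from the exact Mingo--Speicher--Tan enumeration together with $\binom{2k}{k}\le 2^{2k}/\sqrt{2k}$, since the crude bound $\binom{m}{m/2}\le 2^m$ alone does not suffice against the prefactor $\frac{mn}{2(m+n)}$. Your block-factorization of the $\sigma$-sum is a sharper intermediate estimate than the paper's term-count-times-max bound, and the resulting slack in powers of $d$ (using $m+n\ge 2$) correctly absorbs the honest max-to-operator-norm factor $d^2$ for a $d^2\times d^2$ matrix, so the estimate closes.
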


\begin{proof}
Recall that $M_{m,n}({\bf a};{\bf b}) = \EOP_2(a_0X\cdots X a_m \otimes b_0X\cdots X b_n)$. Fix $p,q,r,s\in[d]$. From Proposition~\ref{Proposition:MomentCumulantFormula}, we have that $M_{m,n}({\bf a};{\bf b})(p,q;r,s)$ equals
\eq{\sum_{\pi\in\AP_{m,n}} \SUM{\repdc{i'}{1}{m}\\\repdc{j'}{1}{m}} \SUM{\repdc{i''}{1}{n}\\\repdc{j''}{1}{n}} \left[\prod_{w=0}^m a_w(j'_w,i'_{w+1})\right] \left[\prod_{w=0}^n b_{n \minus w}^\T(j''_w,i''_{w+1})\right] \prod_{\underset{\pi}{u \sim v}} \sigma(i_u,j_u;i_v,j_v)}
with $j'_0=p$, $i'_{m+1}=q$, $j''_0=s$, and $i''_{n+1}=r$. Since $|A(i,j)|\leq\|A\|$ for all $A\in\MAT_d$ and all $1\leq i,j\leq d$,
\eqn{eq:EstimateMomentAP}{|M_{m,n}({\bf a};{\bf b})(p,q;r,s)| \leq \left[\prod_{w=0}^m \|a_w\|\right] \left[\prod_{w=0}^n \|b_w\|\right] \|\Sigma\|^{(m+n)/2} d^{2m} d^{2n} |\AP_{m,n}|.}
Recall that \cite[eq. (11)]{MingoSpeicherTan2009}, for every $m,n\geq1$,
\eq{|\AP_{m,n}| = \begin{cases}\frac{mn}{2(m+n)} \binom{m}{m/2}\binom{n}{n/2} & m,n\textnormal{ even},\\
\frac{(m+1)(n+1)}{8(m+n)} \binom{m+1}{(m+1)/2} \binom{n+1}{(n+1)/2} & m,n\textnormal{ odd},\\
0 & \textnormal{otherwise.}\end{cases}}
The standard estimate $\binom{2n}{n} \leq 2^{2n}/\sqrt{2n}$ (e.g., \cite{Robbins1955}) readily implies that \dsty{|\AP_{m,n}|\leq 2^m 2^n}. Plugging this inequality in \eqref{eq:EstimateMomentAP}, we obtain that
\eq{|M_{m,n}({\bf a};{\bf b})(p,q;r,s)| \leq \left(2d^2\sqrt{\|\Sigma\|}\right)^m \left(2d^2\sqrt{\|\Sigma\|}\right)^n \left[\prod_{w=0}^m \|a_w\|\right] \left[\prod_{w=0}^n \|b_w\|\right].}
In particular, $\|M_{m,n}({\bf a};{\bf b})\|_\textnormal{max}$ is bounded by the same quantity and therefore
\eq{\|M_{m,n}({\bf a};{\bf b})\| \leq d \left(2d^2\sqrt{\|\Sigma\|}\right)^m \left(2d^2\sqrt{\|\Sigma\|}\right)^n \|a_0\| \cdots \|a_m\| \cdot \|b_0\| \cdots \|b_n\|,}
as we wanted to prove.
\end{proof}

With the usual abuse of notation, the previous lemma reads as
\eq{\|M_{m,n}(a;b)\| \leq d \left(2d^2\sqrt{\|\Sigma\|}\right)^m \left(2d^2\sqrt{\|\Sigma\|}\right)^n \|a\|^{m+1} \|b\|^{n+1}.}
If $a$ and $b$ are invertible and $\|a^{-1}\|,\|b^{-1}\|<\left(2d^2\sqrt{\|\Sigma\|}\right)^{-1}$, then the series
\eqn{eq:calG2ConvergentPowerSeries}{\calG_2(a,b) = \sum_{m,n\geq1} \EOP_2\left((a^{-1}X)^na^{-1} \otimes (b^{-1}X)^mb^{-1}\right)}
converges in the C${}^*$-norm of $\MAT_d\otimes\MAT_d$. Thus, even though $\calG_2(a,b)$ was defined at the level of formal expressions, it has a well defined meaning as a matricial power series for such $a$ and $b$. By our standing assumptions, the mapping $\EOP_2$ is only defined on $\Mat{d}{\calA}\times\Mat{d}{\calA}$. Nonetheless, \eqref{eq:calG2ConvergentPowerSeries} allows us to define
\eq{\EOP_2\left(\left(a-X\right)^{-1} \otimes \left(b-X\right)^{-1}\right) := \sum_{m,n\geq1} \EOP_2\left((a^{-1}X)^na^{-1},(b^{-1}X)^mb^{-1}\right) = \calG_2(a,b)}
whenever $a,b,a-X,b-X$ are invertible in $\Mat{d}{\calB}$ and $\|a^{-1}\|,\|b^{-1}\|<\left(2d^2\sqrt{\|\Sigma\|}\right)^{-1}$.

In order to lift the equality in \eqref{eq:AnnularCauchyTransform} to an analytic level, we  introduce the following transforms:
\begin{eqnarray*}
\calG(a)&=& \Eop{(a-X)^{-1}} = (\id\otimes\FI)\left((a-X)^{-1}\right),\\
\calG_D(a,b)&=& [\calG(a)\otimes\calG(b)] \left({\rm I}_{d^2}-\Sigma [\calG(a)\otimes\calG(b)]\right)^{-1},\\
\calH(a,b)&=&\left({\rm I}_{d^2}-\Sigma[\calG(a)\otimes\calG(b)]\right)^{-1}\Sigma.
\end{eqnarray*}
Since \ndsty{\calG(a^{-1})=\Eop{(a^{-1}-X)^{-1}}=\Eop{(1-aX)^{-1}}a}, we conclude that $a\mapsto \calG(a^{-1})$ extends analytically to the ball $\{a\in\MAT_d \colon \|a\|<\|X\|^{-1}\}$. We agree to denote this extension by $\calG(a^{-1})$ even when $a$ is not invertible. Since $\calG(a^{-1})\simeq a$ in norm near zero, there exists a neighborhood of zero, say $B_0$, such that the mappings $(a,b)\mapsto\calG_D(a^{-1},b^{-1})$ and $(a,b)\mapsto\calH(a^{-1},b^{-1})$ extend analytically to $B_0\times B_0$. Note that if $a,b\in\MAT_d$ are invertible and $a^{-1},b^{-1}\in B_0$, then $G_S(a)$, $G_D(a,b)$ and $H(a,b)$ are convergent power series with limits $\calG(a)$, $\calG_D(a,b)$ and $\calH(a,b)$, respectively. Therefore, equation \eqref{eq:AnnularCauchyTransform} implies that for such $a$ and $b$ we have that
\eq{\Theta \EOP_2(\left(a-X\right)^{-1} \otimes \left(b-X\right)^{-1})= \calG_D(a,a^\T)\left\{\Theta[\calH(a,b)]+\Phi\left[\calH(a,b)^\Gamma(\calG(a)\otimes\calG(b))^\Gamma\calH(a,b)^\Gamma\right]\right\}\calG_D(b,b^\T)^\T}
in the sense of analytic mappings on $\MAT_d\times\MAT_d$. This equality allows us to extend the domain of definition of $\EOP_2$ to be equal to the domain of the right hand side. In this direction, the main result of this section is the following. Recall that $a^{-\T}=(a^\T)^{-1}$.

\begin{theorem}
\label{Thm:AnalyticalBehavior}
For $a,b\in\MAT_d$ invertible, let
\eq{\calD=\{(z,w)\in\mathbb C^2\colon 1-zaX,1-wbX\text{ invertible}\}.}
The mapping
\al{
(z,w) &\mapsto \calG_D((za)^{-1},(za)^{-\T}) \bigg\{\Theta[\calH((za)^{-1},(wb)^{-1})] + \\
& \quad + \Phi\left[\calH((za)^{-1},(wb)^{-1})^\Gamma(\calG((za)^{-1})\otimes \calG((wb)^{-1}))^\Gamma\calH((za)^{-1},(wb)^{-1})^\Gamma\right]\bigg\} \times\\
& \quad \times \calG_D((wb)^{-1},(wb)^{-\T})^\T
}
is analytic on the connected component of $\calD$ containing $(0,0)$.
\end{theorem}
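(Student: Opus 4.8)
The plan is to view the displayed map as a composition in which every operation except a small number of matrix inversions is a polynomial (the linear maps $\Theta$, $\Phi$, $(\cdot)^\Gamma$ and the transpose, together with tensor products, matrix products and sums), and to locate exactly where the inversions sit. First I would record that $\calD = \calD_a \times \calD_b$, where $\calD_a = \{z\in\C : 1-zaX \text{ is invertible in } \Mat{d}{\calB}\}$ and likewise for $\calD_b$, so that the component of $\calD$ through $(0,0)$ is $\calD_a^0 \times \calD_b^0$ with $\calD_a^0,\calD_b^0$ the components of $0$. Since $X=X^\T$ and $1-AB$ is invertible iff $1-BA$ is, $1-za^\T X$ is invertible iff $1-zaX$ is; combined with $\calG((za)^{-1}) = (\id\otimes\FI)((1-zaX)^{-1})\,(za)$ and $\calG(c^\T)=\calG(c)^\T$, this shows that all four of $\calG((za)^{-1}),\calG((za)^{-\T}),\calG((wb)^{-1}),\calG((wb)^{-\T})$ extend to analytic $\MAT_d$-valued functions on $\calD_a^0$, resp. $\calD_b^0$, vanishing at the origin. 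Hence the only possible obstruction to analyticity of the displayed map on $\calD_a^0\times\calD_b^0$ is the invertibility of the three $d^2\times d^2$ matrices $I_{d^2}-\Sigma[\calG(c)\otimes\calG(d)]$ occurring inside $\calG_D((za)^{-1},(za)^{-\T})$, $\calG_D((wb)^{-1},(wb)^{-\T})$ and $\calH((za)^{-1},(wb)^{-1})$, where recall $\calG_D(c,d)=[\calG(c)\otimes\calG(d)](I_{d^2}-\Sigma[\calG(c)\otimes\calG(d)])^{-1}$ and $\calH(c,d)=(I_{d^2}-\Sigma[\calG(c)\otimes\calG(d)])^{-1}\Sigma$.

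So everything reduces to showing that $I_{d^2}-\Sigma[\calG(c)\otimes\calG(d)]$ is invertible for all $(z,w)$ in the component, for each of the three relevant pairs $(c,d)$. I would first observe that each $\calG(c)$ occurring there is itself invertible: the set $\{(za)^{-1}:z\in\calD_a^0\setminus\{0\}\}$ is connected and contains points of arbitrarily small norm-inverse, hence lies in the connected component of $\{c\in\MAT_d:c-X\text{ invertible}\}$ on which the first-order fixed-point identity $\calG(c)(c-\eta(\calG(c)))=I_d$ — valid where $\|c^{-1}\|$ is small — persists by the identity theorem; this yields $\calG(c)^{-1}=c-\eta(\calG(c))$, and, via the relation between $\eta$ and $\Sigma$ coming from \eqref{eq:saCovarianceMapping} and \eqref{eq:SumTensorProduct}, reduces invertibility of $I_{d^2}-\Sigma[\calG(c)\otimes\calG(d)]$ (after multiplying by the invertible $\calG(c)^{-1}\otimes\calG(d)^{-1}$) to that of $(c-\eta(\calG(c)))\otimes(d-\eta(\calG(d)))-\Sigma$.

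The crux, which I expect to be the main obstacle, is to show that the spectral radius of $\Sigma[\calG(c)\otimes\calG(d)]$ is strictly less than $1$ for all $(z,w)$ in the component and all three pairs $(c,d)$; granting this, invertibility of $I_{d^2}-\Sigma[\calG(c)\otimes\calG(d)]$ holds throughout the component and no further connectedness argument is needed. I would prove the spectral bound from the operator-valued Schwarz/subordination properties of $\calG$: $\Sigma\ge 0$ as a covariance, and for such $c$ the iteration $T_c$ defining $\calG$ is a strict contraction, so its derivative at the fixed point, $w\mapsto\calG(c)\eta(w)\calG(c)$, has spectral radius $<1$; transferring this through the identification of $\eta$ with left multiplication by $\Sigma$ controls the diagonal pairs, and using $\calG(c^\T)=\calG(c)^\T$ the conjugate pair $(c,c^\T)$, while the genuinely mixed pair $((za)^{-1},(wb)^{-1})$ is handled by a Cauchy--Schwarz/positivity interpolation between the two diagonal bounds — the matrix analogue of how the scalar factor $1-G(z)G(w)$ is kept off $0$ by $|\sqrt{\|\Sigma\|}\,G(z)|<1$ and $|\sqrt{\|\Sigma\|}\,G(w)|<1$ in the $d=1$ case. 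With the three inverses in hand, analyticity of the displayed map on $\calD_a^0\times\calD_b^0$ is immediate since every constituent is analytic there. Should the uniform spectral bound prove awkward to obtain on all of the component, a fallback is a clopen argument: the subset of $\calD_a^0\times\calD_b^0$ on which the three matrices are invertible is open and contains $(0,0)$, and its closedness in the component would follow from any locally uniform bound on $\|(I_{d^2}-\Sigma[\calG\otimes\calG])^{-1}\|$, for which the same subordination estimates suffice.
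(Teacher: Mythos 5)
Your reduction to the invertibility of the three matrices $\mathrm{I}_{d^2}-\Sigma[\calG(c)\otimes\calG(d)]$ is the right way to locate the only possible obstruction, but the step you yourself flag as the crux --- that the spectral radius of $\Sigma[\calG(c)\otimes\calG(d)]$ stays strictly below $1$ on the whole connected component --- is a genuine gap, and the tools you propose for it do not apply. The subordination/contraction argument (strict contractivity of $T_c$, spectral radius of $h\mapsto\calG(c)\eta(h)\calG(c)$ at the fixed point) is a positivity argument that lives in the matrix upper half-plane $\Im c>0$; here $a,b$ are arbitrary invertible matrices and $(z,w)$ ranges over the full component of $\{(z,w):1-zaX,\,1-wbX\text{ invertible}\}$, so the arguments $(za)^{-1},(wb)^{-1},(za)^{-\T}$ need not lie in any half-plane and the Earle--Hamilton/Schwarz machinery gives you nothing there. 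The ``Cauchy--Schwarz interpolation'' for the mixed pair is likewise unsubstantiated (note also that $\Sigma$ itself need not be positive semidefinite; complete positivity of $\eta$ corresponds to positivity of a \emph{rearrangement} of $\Sigma$). Your clopen fallback has the same circularity: the locally uniform bound on $\|(\mathrm{I}_{d^2}-\Sigma[\calG\otimes\calG])^{-1}\|$ is exactly the claim at issue.

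The paper does not prove global invertibility at all. Instead it observes that $z\mapsto\det(\mathrm{I}_{d^2}-\Sigma\,\calG((za)^{-1})\otimes\calG(\cdot))$ equals $1$ at $z=0$, so its zero set in the component is discrete; off that discrete set the displayed formula coincides with $\Theta\,\EOP_2\big(((za)^{-1}-X)^{-1}\otimes((wb)^{-1}-X)^{-1}\big)$, and the latter is shown to extend analytically across the discrete set by Lemma~\ref{Lemma:ExtensionAnalyticMappings}: one writes it as $A\circ f$ with $A(x)=\EOP_2(x\otimes(\cdot-X)^{-1})$ linear on a dense subspace and $f(z)=z(1-zaX)^{-1}a$ analytic and injective, proves $A$ is Lipschitz on $f(K)$ for compact annuli $K$ around each singular point, and removes the singularity via the Cauchy integral formula; one then repeats the argument in $w$. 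So the apparent poles of your three inverses are handled as \emph{removable singularities} of the analytically continued $\EOP_2$, not ruled out. To salvage your route you would need either a proof of the spectral-radius bound valid for non-half-plane arguments (which the authors evidently did not have) or a mechanism, like the paper's, for continuing the function across the points where $\mathrm{I}_{d^2}-\Sigma[\calG\otimes\calG]$ degenerates.
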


This theorem immediately leads to the following. By abuse of notation, we let $z$ denote $z{\rm I}_d$.

\begin{corollary}
\label{Corollary:AnalyticitySOCauchyTransform}
The map
\eq{(z,w)\mapsto \calG_D(z,z) \left\{\Theta[\calH(z,w)]+\Phi\left[\calH(z,w)^\Gamma(\calG(z)\otimes\calG(w))^\Gamma\calH(z,w)^\Gamma\right]\right\} \calG_D(w,w)^\T}
is analytic on the set $(\C\bs[-\|X\|,\|X\|])^2$.
\end{corollary}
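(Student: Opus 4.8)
The plan is to obtain this corollary as a direct specialization of Theorem~\ref{Thm:AnalyticalBehavior}, taking $a=b={\rm I}_d$ and then inverting the scalar variables. First I would note that, since each entry $X(p,q)$ is selfadjoint and $X(p,q)=X(q,p)$, the element $X$ is selfadjoint in $\Mat{d}{\calB}$, so $\mathrm{spec}(X)$ is a compact subset of $[-\|X\|,\|X\|]$. Setting $a=b={\rm I}_d$ in Theorem~\ref{Thm:AnalyticalBehavior}, the set $\calD$ becomes $D\times D$ with $D=\{z\in\C\colon 1-zX\text{ invertible in }\Mat{d}{\calB}\}$. Writing $1-zX=-z(X-z^{-1}{\rm I}_d)$ for $z\neq 0$ shows $D=\C\setminus\{\lambda^{-1}\colon\lambda\in\mathrm{spec}(X)\setminus\{0\}\}$, which is $\C$ minus a closed subset of the real line contained in $\{t\in\R\colon|t|\geq\|X\|^{-1}\}$; such a set is connected, so $D\times D$ is connected and hence equals the connected component of $\calD$ containing $(0,0)$. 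Therefore Theorem~\ref{Thm:AnalyticalBehavior}, applied with $a=b={\rm I}_d$ (so that $(za)^{-1}=(za)^{-\T}=z^{-1}{\rm I}_d$), yields an analytic map $F$ on $D\times D$, namely $F(z,w)=\calG_D(z^{-1},z^{-1})\{\Theta[\calH(z^{-1},w^{-1})]+\Phi[\calH(z^{-1},w^{-1})^\Gamma(\calG(z^{-1})\otimes\calG(w^{-1}))^\Gamma\calH(z^{-1},w^{-1})^\Gamma]\}\calG_D(w^{-1},w^{-1})^\T$, where as usual $z^{-1}$ abbreviates $z^{-1}{\rm I}_d$.

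Next I would check that $z\mapsto z^{-1}$ maps $\C\setminus[-\|X\|,\|X\|]$ into $D$: this set omits $0$, so the map is analytic there, and for $z\notin[-\|X\|,\|X\|]$ we have $z\notin\mathrm{spec}(X)$, hence $X-z{\rm I}_d$ — and therefore $1-z^{-1}X$ — is invertible, i.e. $z^{-1}\in D$. Consequently $(z,w)\mapsto F(z^{-1},w^{-1})$ is analytic on $(\C\setminus[-\|X\|,\|X\|])^2$. Substituting $z^{-1}$ for $z$ (and $w^{-1}$ for $w$) in the formula for $F$ turns every occurrence of $\calG_D(z^{-1},z^{-1})$ into $\calG_D(z,z)$, every $\calH(z^{-1},w^{-1})$ into $\calH(z,w)$, and every $\calG(z^{-1})$ into $\calG(z)$, so that $F(z^{-1},w^{-1})$ is precisely the mapping in the statement; this completes the argument.

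I do not anticipate any serious difficulty, as the proof is essentially bookkeeping on top of Theorem~\ref{Thm:AnalyticalBehavior}. The two points that deserve a line of justification are the connectedness of $D$ — needed so that the ``connected component of $\calD$ containing $(0,0)$'' appearing in Theorem~\ref{Thm:AnalyticalBehavior} is in fact all of $\calD$ — and the observation that, for $a={\rm I}_d$, both $(za)^{-1}$ and $(za)^{-\T}$ collapse to $z^{-1}{\rm I}_d$, so that after the inversion $z\mapsto z^{-1}$ one recovers the symmetric pair of arguments $\calG_D(z,z)$ and $\calG_D(w,w)$ occurring in the corollary rather than a mismatched pair.
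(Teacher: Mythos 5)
Your argument is correct and follows exactly the route the paper intends: the paper derives Corollary~\ref{Corollary:AnalyticitySOCauchyTransform} as an immediate specialization of Theorem~\ref{Thm:AnalyticalBehavior} with $a=b={\rm I}_d$, and you have merely made explicit the two routine checks (connectedness of $\calD$, so that the component containing $(0,0)$ is all of it, and the substitution $z\mapsto z^{-1}$ carrying $\C\setminus[-\|X\|,\|X\|]$ into $\{z\colon 1-zX\text{ invertible}\}$) that the paper leaves to the reader.
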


The rest of this section is devoted to the proof of Theorem~\ref{Thm:AnalyticalBehavior}. We start with two technical lemmas.

\begin{lemma}
Let $E$ be a complex Banach space. Assume that $\Omega\subset\mathbb C$ is an open connected set and $f\colon\Omega\to E$ is a one-to-one analytic function such that $f'(z)\neq0$ for all $z\in\Omega$. If $K\subset\Omega$ is a compact set, then \dsty{\sup_{\begin{smallmatrix}z_1,z_2\in K\\z_1\neq z_2\end{smallmatrix}} \frac{|z_2-z_1|}{\|f(z_2)-f(z_1)\|_E}} is finite.
\end{lemma}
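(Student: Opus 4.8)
The plan is to reduce the statement to the compactness of $K\times K$ together with the continuity and non-vanishing of a suitable difference quotient. Define $h\colon\Omega\times\Omega\to E$ by $h(z_1,z_2)=\dfrac{f(z_2)-f(z_1)}{z_2-z_1}$ when $z_1\neq z_2$ and $h(z,z)=f'(z)$. Off the diagonal $h$ is visibly continuous, and near a diagonal point $(z_0,z_0)$ I would use the integral representation $h(z_1,z_2)=\int_0^1 f'\big(z_1+t(z_2-z_1)\big)\,dt$, valid for $z_1,z_2$ in any ball $B(z_0,r)\subset\Omega$, because then the segment $[z_1,z_2]$ lies in $\Omega$ and the fundamental theorem of calculus applies to the continuous $E$-valued function $t\mapsto f\big(z_1+t(z_2-z_1)\big)$ on $[0,1]$ (the integral being a norm-limit of Riemann sums). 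Since $f'$ is continuous, hence uniformly continuous on the compact set $\overline{B(z_0,r/2)}$, this integral depends continuously on $(z_1,z_2)$ in $B(z_0,r/2)\times B(z_0,r/2)$, so $h$ is continuous on all of $\Omega\times\Omega$.

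Next I would check that $h$ never vanishes on $\Omega\times\Omega$. For $z_1\neq z_2$, $h(z_1,z_2)=0$ would force $f(z_1)=f(z_2)$, which is impossible since $f$ is one-to-one; and on the diagonal $h(z,z)=f'(z)\neq 0$ by hypothesis. Consequently $(z_1,z_2)\mapsto\|h(z_1,z_2)\|_E$ is a continuous, strictly positive function on $\Omega\times\Omega$. Restricting to the compact subset $K\times K\subset\Omega\times\Omega$, it attains a minimum $c>0$. Then for all $z_1,z_2\in K$ with $z_1\neq z_2$ we have $\|f(z_2)-f(z_1)\|_E=|z_2-z_1|\,\|h(z_1,z_2)\|_E\geq c\,|z_2-z_1|$, so the supremum in the statement is at most $1/c<\infty$.

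The main (and essentially only) technical point is the continuity of $h$ across the diagonal; the rest is formal. The two things to be careful about there are: first, that openness of $\Omega$ (rather than mere compactness of $K$) is what lets the connecting segments $[z_1,z_2]$ stay inside $\Omega$, so the integral formula is legitimate; and second, that the vector-valued fundamental theorem of calculus and the uniform-continuity argument for continuity of the integral are valid for Banach-space-valued holomorphic functions — which they are, since such functions are continuous (indeed locally given by norm-convergent power series), $f'$ exists and is continuous, and $[0,1]$ is compact.
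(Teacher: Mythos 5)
Your proof is correct, but it takes a different (and somewhat cleaner) route than the paper's. The paper argues by contradiction: it extracts sequences $(p_n),(q_n)\subset K$ along which the quotient $\|f(p_n)-f(q_n)\|_E/|p_n-q_n|$ tends to zero, passes to a convergent subsequence $(p_n,q_n)\to(p,q)$, and splits into the cases $p\neq q$ (contradicting injectivity) and $p=q$ (where a Taylor expansion forces $f'(p)=0$, contradicting the hypothesis). Your two cases --- $h$ nonvanishing off the diagonal by injectivity, and on the diagonal because $h(z,z)=f'(z)\neq0$ --- are exactly the same dichotomy, but you organize the argument directly: you extend the difference quotient to a continuous, nowhere-vanishing $E$-valued function $h$ on $\Omega\times\Omega$ and invoke the extreme value theorem on $K\times K$. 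What this buys is (i) an explicit bound $1/c$ with $c=\min_{K\times K}\|h\|_E>0$ rather than a pure existence statement, and (ii) a cleaner treatment of the diagonal case: the integral representation $h(z_1,z_2)=\int_0^1 f'(z_1+t(z_2-z_1))\,dt$ on convex neighborhoods makes the continuity of $h$ across the diagonal transparent, whereas the paper's appeal to ``the Taylor series expansion'' at the point $p=q$ is terser and leaves the reader to supply the uniformity needed to conclude $f'(p)=0$ from a two-variable limit. Your cautionary remarks (convexity of the ball keeping segments inside $\Omega$, and the validity of the vector-valued fundamental theorem of calculus for Banach-space-valued holomorphic functions) address precisely the points where such an argument could go wrong, so the proof is complete.
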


\begin{proof}
In order to reach contradiction, assume that the supremum is unbounded. In that case, there exist sequences $(p_n)_{n\geq1},(q_n)_{n\geq1}\subset K$ such that
\eq{\lim\limits_{n\to\infty} \frac{\|f(p_n)-f(q_n)\|_E}{|p_n-q_n|} = 0.}
Since $K$ is compact, by passing to a subsequence, we may assume that $(p_n,q_n)\to(p,q)\in K^2$. If $p\neq q$, then
\eq{\lim\limits_{n\to\infty} \frac{\|f(p_n)-f(q_n)\|_E}{|p_n-q_n|} = \frac{\|f(p)-f(q)\|_E}{|p-q|}.}
Since $f$ is one-to-one, the right hand side of the previous equation has to be strictly positive. This contradicts the fact that the left hand side of the previous equation is zero. If $p=q$, then the Taylor series expansion of $f$ around $p$ shows that $f'(p)=0$. Since the latter equality contradicts our assumptions, we conclude that the supremum has to be bounded.
\end{proof}

\begin{lemma}
\label{Lemma:ExtensionAnalyticMappings}
Let $D$ be a dense subspace of a Banach space $E$ and let $A:D\to\C^N$ be a linear operator. Assume that $\Omega\subset\mathbb C$ is an open connected set and $f\colon\Omega\to E$ is a one-to-one analytic function such that $f'(z)\neq0$ for all $z\in\Omega$. If $S\subset\Omega$ is discrete in $\Omega$, and $f(\Omega\setminus S)\subset D$, and $A\circ f\colon\Omega\setminus S\to\mathbb C^N$ is analytic, then $A\circ f$ extends analytically to $\Omega$. 
\end{lemma}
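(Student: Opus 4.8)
The plan is to reduce the statement to a local removable‑singularity assertion and then establish removability through a contour‑integral computation of Laurent coefficients. Since $S$ is discrete in $\Omega$, it suffices to show that for each $s\in S$ the map $A\circ f$, which is analytic on a punctured disk around $s$, extends analytically across $s$; gluing these local extensions with $A\circ f$ on $\Omega\setminus S$ (they agree on overlaps by the identity theorem) then yields an analytic extension on all of $\Omega$. So fix $s\in S$, pick $\rho>0$ with $\overline{B(s,\rho)}\subset\Omega$ and $\overline{B(s,\rho)}\cap S=\{s\}$, and work on the punctured disk $B^{\ast}:=B(s,\rho)\setminus\{s\}$.

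By Riemann's removable‑singularity theorem, applied in each of the $N$ coordinates, it is enough to show that the principal part of the Laurent expansion of $A\circ f$ at $s$ vanishes, i.e.\ that for every $m\ge 1$ one has
\[
c_{-m}\;:=\;\frac{1}{2\pi i}\oint_{|z-s|=r}(z-s)^{m-1}\,(A\circ f)(z)\,dz\;=\;0\in\C^{N},
\]
this integral being independent of $r\in(0,\rho)$. For $z$ on the circle we have $f(z)\in D$, and since $D$ is a linear subspace, $(z-s)^{m-1}(A\circ f)(z)=A\big((z-s)^{m-1}f(z)\big)$; thus the integrand is $A$ applied to the continuous $D$‑valued curve $z\mapsto (z-s)^{m-1}f(z)$. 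The idea is to move $A$ outside the integral: the associated $E$‑valued integral $\frac{1}{2\pi i}\oint_{|z-s|=r}(z-s)^{m-1}f(z)\,dz$ is the coefficient of $(z-s)^{-m}$ in the Taylor expansion at $s$ of the $E$‑analytic function $f$ — which is defined and analytic on the \emph{whole} disk $B(s,\rho)$, not merely on $B^{\ast}$ — and hence vanishes for all $m\ge 1$. Consequently $c_{-m}=A(0)=0$, the principal part vanishes, and $A\circ f$ agrees on $B^{\ast}$ with $\sum_{n\ge 0}c_n(z-s)^n$, the required local extension.

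The main obstacle is justifying the interchange of $A$ with this $E$‑valued contour integral: $A$ is only assumed linear and may be unbounded (indeed non‑closable), so it need not commute with $E$‑valued integrals. This is exactly where the hypotheses that $f$ is one‑to‑one with $f'(z)\neq 0$ are used, together with the preceding lemma applied to $K=\overline{B(s,\rho)}$, which shows $f|_{K}$ is a bi‑Lipschitz embedding into $E$. The plan is to exploit this to treat $f$ as an analytic coordinate near $s$: one approximates the $E$‑valued contour integral by Riemann sums, each of which lies in $D$ (a finite linear combination of values of $f$ along the curve) and converges in $E$ to the vanishing integral; then, using linearity of $A$, the fact that $A\circ f$ is already analytic — in particular bounded — along the circle $|z-s|=r\subset\Omega\setminus S$, and the co‑Lipschitz control on $f$, one argues that the $A$‑images of these Riemann sums tend to $0$, which yields $c_{-m}=0$. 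Once the principal part is shown to vanish at every $s\in S$, gluing the resulting local analytic extensions with $A\circ f$ on $\Omega\setminus S$ completes the proof.
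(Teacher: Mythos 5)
Your reduction to a local statement and the reformulation via Laurent coefficients are both sound: since $S$ is discrete it suffices to remove each singularity separately, and removability at $s$ is equivalent to the vanishing of $c_{-m}=\frac{1}{2\pi i}\oint_{|z-s|=r}(z-s)^{m-1}(A\circ f)(z)\,dz$ for all $m\geq 1$. The $E$-valued integral $\frac{1}{2\pi i}\oint_{|z-s|=r}(z-s)^{m-1}f(z)\,dz$ does vanish, and its Riemann sums do lie in $D$ and converge to $0$ in $E$. The genuine gap is the step you yourself flag as ``the main obstacle'' and then leave as a plan: concluding that the $A$-images of these Riemann sums tend to $0$. What you actually have is a sequence $u_n\in D$ with $u_n\to 0$ in $E$ and $A(u_n)\to c_{-m}$; for an unbounded, possibly non-closable linear $A$ this by itself says nothing about $c_{-m}$. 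The tools you invoke do not close the gap: the preceding lemma (together with analyticity of $f$) makes $f$ bi-Lipschitz on $\overline{B(s,\rho)}$, and analyticity of $A\circ f$ away from $s$ makes $A$ Lipschitz on sets of the form $f(K)$ only for compacts $K$ \emph{avoiding} $s$ (the bound comes from $\sup_K\|(A\circ f)'\|$, which is not available on a compact containing $s$ --- that would essentially be the conclusion). But the Riemann sums $u_n$ live in the linear span of $f(\{|z-s|=r\})$, not in $f(K)$, and a Lipschitz bound for a linear map on a set does not propagate to its linear span. So the asserted convergence $A(u_n)\to 0$ is unproved, and no argument with the stated ingredients yields it.

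For comparison, the paper does not try to kill the principal part directly. It works on the annulus $K=\{z: r/2\leq|z-w|\leq r\}$, where $(A\circ f)'$ is bounded, combines this with the co-Lipschitz estimate of the preceding lemma to show that $A$ is Lipschitz on $f(K)$, defines the candidate value at $w$ as the Cauchy-type integral $\frac{1}{2\pi i}\oint_{|z-w|=r}(z-w)^{-1}(A\circ f)(z)\,dz$, and then compares the Riemann sums of this integral with those of the corresponding integrals with kernel $(z-w_k)^{-1}$ \emph{term by term}, pairing identical sample points; linearity of $A$ and the Lipschitz control enter only on such matched differences, never on a limit taken inside $A$. Continuity of the extension at $w$ then gives analyticity by the removable-singularity criterion. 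If you want to keep your Laurent-coefficient route, you must replace the claim ``$u_n\to 0$ in $E$ implies $A(u_n)\to 0$'' by an argument of this matched-sample type (or otherwise establish some form of continuity of $A$ along the specific sequence $u_n$); as written, the interchange of $A$ with the contour integral is asserted rather than proved, and that interchange is exactly the content of the lemma.
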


\begin{proof}
For notational simplicity, let $g=A \circ f\colon\Omega\setminus S\to\C^N$. Since $S$ is discrete, it is enough to show that $g$ can be analytically extended to a given point $w\in S$.

Since $S$ is discrete, there exists $r\in(0,1)$ such that $\{z\in\C:|z-w| \leq r\} \cap S = \{w\}$. Let $K=\{z\in\mathbb C\colon r/2\leq|z-w|\leq r\}$. Any point $z\in K$ can be expressed uniquely as $z=w+\rho e^{2\pi it}$, $\rho=|z-w|$, $t=\arg(z-w)\in[0,2\pi)$. We start proving that $A$ is Lipschitz on $f(K)$. Take $u_1,u_2\in f(K)$. Since $f$ is one-to-one,  for $k=1,2$ there exists a unique  $z_k\in K$ such that $u_k=f(z_k)$. Since $g$ is analytic on a neighbourhood of $K$, it follows there exists a constant $m=m(K,g)\in[0,+\infty)$ such that  \dsty{m=\sup_{z\in K} \|g'(z)\|_{\C^N}}. 

Connect the two points $z_1,z_2$ via two concatenated paths, $\theta_1,\theta_2$, one being an arc of a circle centered at $w$, the other a segment on a ray starting from $w$. (See Figure \ref{fig:curve_figure}.)
Without loss of generality, 
we write $z_1 = w + \rho_1 e^{2 \pi i t_1}$ and $z_2 = w + \rho_2 e^{2 \pi i t_2}$ with
$\rho_1\leq\rho_2$ and $0\leq t_2-t_1\leq1/2$ (the other cases are treated the same way).  Then $\|g(z_2) - g(z_1)\|_{\C^N}\leq\|g(z_2) - g(\zeta)\|_{\C^N}+\|g(\zeta) - g(z_1)\|_{\C^N}$, where $\zeta=w+\rho_1e^{2\pi it_2}$. To estimate the first term, let $\gamma:\R\to\C$ be given by $\gamma(t) = w+\rho_1e^{2\pi i t}$, so that $\theta_1=\gamma|_{[t_1,t_2]}$. Then


\begin{figure}
  \begin{minipage}[c]{0.25\textwidth}
    \includegraphics[width=\textwidth]{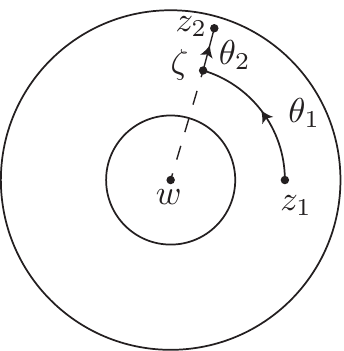}
  \end{minipage}\hfill
  \begin{minipage}[c]{0.7\textwidth}
    \caption{\label{fig:curve_figure}
       The curve $\theta_1$ connecting $z_1$ to $\zeta$ and the curve $\theta_2$  connecting $\zeta$ to $z_2$.}
  \end{minipage}
\end{figure}

\al{
\|g(z_1) - g(\zeta)\|_{\C^N} &= \|g(\gamma(t_2)) - g(\gamma(t_1))\|_{\C^N}\\
&\leq \int_{t_1}^{t_2} |\gamma'(s)| \|g'(\gamma(s))\|_{\C^N} \dif s\\
&\leq 2\pi \rho_1 m (t_2-t_1),
}
where $m$ has been defined above. 
Observe that $|z_1-\zeta| =\rho_1|e^{2\pi i (t_2-t_1)} - 1| = 2 \rho_1 \sin(\pi(t_2-t_1)) \geq 4 \rho_1(t_2 - t_1)$. 
Hence
\eq{\|g(\zeta) - g(z_1)\|_{\C^N} \leq \frac{\pi}{2} m |\zeta-z_1|.}
The second term is majorized as follows:
$$
\|g(z_2) - g(\zeta)\|_{\C^N}\leq\int_0^1|z_2-\zeta|\|g'(sz_2+(1-s)\zeta)\|_{\C^N}\,{\rm d}s\leq{m|z_2-\zeta|}.
$$
Thus, 
$$
\|g(z_2) - g(z_1)\|_{\C^N}\leq\frac{\pi m}{2}|\zeta-z_1|+{m}|z_2-\zeta|.
$$
Note that $|z_1-z_2|\ge\max\{|\zeta-z_1|,|z_2-\zeta|\}$. It follows that
$$
\|g(z_2) - g(z_1)\|_{\C^N}\leq\frac{(2+\pi) m}{2}|z_2-z_1|.
$$
In particular, it follows that 
\eq{\|A(u_2)-A(u_1)\|_{\C^N} \leq \frac{\pi+2}{2} m M \|u_2-u_1\|_E,}
where \dsty{M = \sup_{\begin{smallmatrix}z,z'\in K\\z\neq z'\end{smallmatrix}} \frac{|z-z'|}{\|f(z)-f(z')\|_E}} is finite according to the previous lemma,
and hence $A$ is Lipschitz on $f(K)$.

Consider a sequence $\{w_k\}_k$ converging to $w$. Since $f$ is analytic on $\Omega$, Cauchy's integral formula, see, e.g., Theorem~3.31 in \cite{Rudin1991}, implies that for $\gamma(t) = w + r e^{2 \pi i t}$ we have
\eq{f(w)=\frac{1}{2\pi i} \int_{\gamma} \frac{f(z)}{z-w} \,dz,\quad f(w_k)=\frac{1}{2\pi i} \int_{\gamma} \frac{f(z)}{z-w_k} \,dz,}
for all $k$ sufficiently large. Recall that since $f$ is analytic in the norm topology, the integrals also converge in norm.  More specifically, 
\eq{\frac{1}{2\pi i} \int_{\gamma} \frac{f(z)}{z-w} \,dz = \int_0^1f(w+re^{2\pi i t})\,dt =\lim_{n\to\infty} \frac{1}{n} \sum_{j=1}^nf\left(w+re^{2\pi i\xi_j^{(n)}}\right)}
and \eq{\frac{1}{2\pi i} \int_{\gamma} \frac{f(z)}{z-w_k} \,dz = \int_0^1\frac{f(w+re^{2\pi i t})}{1+\frac{w-w_k}{re^{2\pi i t}}}
\,dt =\lim_{n\to\infty} \frac{1}{n} \sum_{j=1}^n\frac{f\left(w+re^{2\pi i\xi_j^{(n)}}\right)}{1+\frac{w-w_k}{re^{2\pi i \xi_j^{(n)}}}}}
in the norm topology of $E$, for a partition of $[0,1]$ into $n$ equal segments and a choice of $\xi_j^{(n)}$ in each segment. From the Lipschitzianity of $A$ on $f(K)$, we obtain that
\eq{g(w) :=
\lim_{n\to\infty} \frac{1}{n} \sum_{j=1}^n (A\circ f)\left(w+re^{2\pi i\xi_j^{(n)}}\right)}
exists, with a similar statement for $w_k$. 
Note that
\begin{eqnarray*}
\lefteqn{\left\|\frac{1}{n} \sum_{j=1}^nA\left(f\left(w+re^{2\pi i\xi_j^{(n)}}\right)\right)-
\frac{1}{n} \sum_{j=1}^nA\left(\frac{f\left(w+re^{2\pi i\xi_j^{(n)}}\right)}{1+\frac{w-w_k}{re^{2\pi i \xi_j^{(n)}}}}\right)\right\|}\\
& = & \left\|\frac{1}{n} \sum_{j=1}^n\frac{w-w_k}{re^{2\pi i \xi_j^{(n)}}+w-w_k}A\left(f\left(w+re^{2\pi i\xi_j^{(n)}}\right)\right)\right\|\\
& \leq & \frac{1}{n} \sum_{j=1}^n\frac{|w-w_k|}{r-|w-w_k|}\left\|A\left(f\left(w+re^{2\pi i\xi_j^{(n)}}\right)\right)\right\|\\
& < & \frac{2}{r}|w-w_k|\max\{\|A(f(z))\|\colon z\in K\},
\end{eqnarray*}
for all $k\in\mathbb N$ sufficiently large, where we have used the Lipschitzianity of $A$ on $f(K)$ in the last inequality. Thus, $\|g(w_k)-g(w)\|\to0$ as $k\to\infty$.
Since, according to our hypothesis, $g(w_k)=A(f(w_k))$, by \cite[10.14]{Rudin1987}, the extension $g:(\Omega \setminus S)\cup\{w\}\to\C^N$ is in fact analytic.
\end{proof}

Now we are in position to prove Theorem~\ref{Thm:AnalyticalBehavior}.

\begin{proof}[Proof of Theorem~\ref{Thm:AnalyticalBehavior}]
Note that
\eq{(a,b) \mapsto
\calG_D(a,a^\T)\left\{\Theta[\calH(a,b)]+\Phi\left[\calH(a,b)^\Gamma(\calG(a)\otimes\calG(b))^\Gamma\calH(a,b)^\Gamma\right]\right\}\calG_D(b,b^\T)^\T}
is well-defined on
\begin{eqnarray*}
\lefteqn{\{(a,b)\in M_d(\mathbb C)\times M_d(\mathbb C)\colon a-X,b-X\text{ invertible, } }\\
& & 1\not\in\sigma(\Sigma \calG(a)\otimes\calG(b)^\T)\cup\sigma(\Sigma \calG(a)\otimes \calG(a)^\T)\cup\sigma(\Sigma \calG(b)\otimes \calG(b)^\T)\},
\end{eqnarray*}
where $\sigma(V)$ denotes the spectrum of the linear operator $V$. As a consequence, this allows us to extend 
$(a, b) \mapsto \Theta( \EOP_2(\left(a-X\right)^{-1}\otimes\left(b-X\right)^{-1}))$ to the same set.

Let $a,b\in\MAT_d$ be invertible. For such $a,b\in\MAT_d$, we define the map
\al{
g \colon (z,w) &\mapsto \calG_D((za)^{-1},(za)^{-\T}) \bigg\{\Theta[\calH((za)^{-1},(wb)^{-1})] + \\
& \quad + \Phi\left[\calH((za)^{-1},(wb)^{-1})^\Gamma(\calG((za)^{-1})\otimes \calG((wb)^{-1}))^\Gamma\calH((za)^{-1},(wb)^{-1})^\Gamma\right]\bigg\} \times\\
& \quad \times \calG_D((wb)^{-1},(wb)^{-\T})^\T.
}
By the discussion before Theorem~\ref{Thm:AnalyticalBehavior}, the mapping $g$ is well-defined on a neighborhood of $(0,0)$. Let $g_w:z\mapsto g(z,w).$ For $w_0$ small enough, the mapping $g_{w_0}$ is well-defined on
\al{\{z\in\C : 1-zaX\textnormal{ invertible},1\not\in\sigma(\Sigma \calG((za)^{-1})\otimes\calG(b^{-1})^\T)\cup\sigma(\Sigma \calG((za)^{-1})\otimes \calG((za)^{-1})^\T)\}.}
We would like to be able to extend $g_{w_0}$ to the possibly larger set $\Omega$ which is the connected component of $\{z\in\mathbb C\colon1-zaX\text{ invertible}\}$ containing zero. Observe that $z\mapsto\det({\rm I}_{d^2}-\Sigma G((za)^{-1})\otimes G(b^{-1})^\T)$ maps $z=0$ to $1$, so that the set of its zeros in $\Omega$ is necessarily discrete. Note that this holds as well for $z\mapsto\det(1_{d^2}-\Sigma G((za)^{-1})\otimes G((za)^{-1})^\T)$. Let us denote by $S$ the union of these two discrete sets. The fact that $g_{w_0}$ extends to $S$, i.e., that $g_{w_0}$ has no singularities in $\Omega$, is an immediate application of the previous lemma with $A(x)=\EOP_2\left(x\otimes\left((w_0b)^{-1}-X\right)^{-1}\right)$, the analytic map $f(z)=\left((za)^{-1}-X\right)^{-1}=z(1-zaX)^{-1}a$, and the discrete set $S$ defined above. Let $z_0$ be in the connected component of $\{z\in\C : 1-zaX\text{ invertible}\}$ containing zero. By the above argument, the mapping $w\mapsto \EOP_2\left(\left((z_0a)^{-1}-X\right)^{-1}\otimes\left((wb)^{-1}-X\right)^{-1}\right)$ is well-defined on a small disc around zero. Mutatis mutandis, we can extend this mapping to the set $\{w\in\C:1-wbX\textnormal{ invertible}\}$. Thus, the map $(z,w)\mapsto \EOP_2\left(\left((za)^{-1}-X\right)^{-1}\otimes\left((wb)^{-1}-X\right)^{-1}\right)$ is analytic on the connected component $\{(z,w)\in{\mathbb C}^2 : 1-zaX,1-wbX\text{ invertible}\}$ containing $(0,0)$.
\end{proof}

\section*{Acknowledgements}

We would like to thank R.~Speicher for pointing out the possibility of using the matricial second-order Cauchy transform of block Gaussian matrices to obtain the second-order Cauchy transform of non-commutative rational functions evaluated on selfadjoint Gaussian matrices.

\printbibliography
\end{document}